\numberwithin{equation}{section}
\newtheorem{theorem}{Theorem}[section]
\newtheorem{prop}[theorem]{Proposition}
\newtheorem{definition}[theorem]{Definition}
\newtheorem{lem}[theorem]{Lemma}
\theoremstyle{remark}
\newtheorem{remark}[theorem]{Remark}
\newcommand{\cA}{\mathcal A}
\newcommand{\cS}{\mathcal S}
\newcommand{\cT}{\mathcal T}
\def\M{\mathsf{M}}
\def\p{\mathsf{p}}
\def\q{\mathsf{q}}
\def\R{\mathbb{R}}
\def\C{\mathbb{C}}
\def\N{\mathbb{N}}
\def\Nz{\mathbb{N}_0}
\def\bE{\mathbb{E}}
\def\M{\mathsf{M}}
\def\p{\mathsf{p}}
\def\q{\mathsf{q}}
\def\cS{\mathcal{S}}
\def\re{{\rm Re}}
\def\L{\mathcal{L}}
\def\Lis{\mathcal{L}{\rm{is}}}
\def\F{\mathfrak{F}}
\def\B{\mathbb{B}}
\def\cA{\mathcal{A}}
\def\a{\mathfrak{a}}
\def\sg{{\rm{sign}}}
\def\S{\mathcal{S}}
\def\sL{\mathscr{L}}
\def\max{{\rm max}}
\def\dom{{\rm dom}}
\def\fb{\mathsf{b}}
\title[Maximal $L_q$-regularity of nonlocal parabolic equations]{Maximal $L_q$-regularity of nonlocal parabolic equations in higher order Bessel potential spaces}
\author[N. Roidos]{Nikolaos Roidos}
\address{Department of Mathematics, University of Patras, 26504 Rio Patras, Greece}
\email{roidos@math.upatras.gr}
\author[Y. Shao]{Yuanzhen Shao}
\address{Department of Mathematics, The University of Alabama, Box 870350, Tuscal\-oosa, AL 35487-0350, USA}
\email{yshao8@ua.edu}
\keywords{maximal $L_q$-regularity, nonlinear nonlocal diffusion, fractional porous medium equation, nonlocal Kirchhoff diffusion, smoothing for solutions}
\subjclass[2020]{35B65, 35K59, 35K65, 35R01, 35R11, 76S05}
\begin{document}

\begin{abstract}
We consider fractional parabolic equations with variable coefficients and establish maximal $L_{q}$-regularity in Bessel potential spaces of arbitrary nonnegative order. As an application, we show higher order regularity and instantaneous smoothing for the fractional porous medium equation and for a nonlocal Kirchhoff equation. 
\end{abstract}

\maketitle


\section{Introduction}

The mathematical study of diffusion has been very successful in modelling and analyzing a variety of phenomena in physics, chemistry, biology, material sciences, population dynamics and finance. However, standard diffusion models are incapable of describing long-memory or long-range interactions in real processes and substances and thus fail to explain a number of phenomena occurring in recent experiments, cf. \cite{Cardozo, Carreras, Carreras2, Gentle, Goldstone}. This observation explains the recent surge of the study of nonlocal differential equations. In this article, we will focus on a class of nonlocal parabolic equations.
 
It is well known that maximal $L_q$-regularity theory has been playing an important role in the study of nonlinear parabolic equations. In particular, maximal regularity theory is capable of handling systems (and thus tensor-valued equations) of parabolic equations and higher order equations, which are generally not accessible in many other traditional approaches, like monotone operator techniques, a priori estimates and Leray-Schauder continuation techniques. 

The main theme of the paper is to investigate the maximal $L_q$-regularity of the following nonlocal parabolic equation
\begin{equation}
\label{S1: MREQ 1}
\left\{\begin{aligned}
\partial_t u +w \sL^\sigma u &=f &&\text{on}&&\M\times (0,\infty);\\
u(0)&=u_0 &&\text{on}&&\M.
\end{aligned}\right.
\end{equation}
Here $(\M,g)$ is either an $n$-dimensional smooth closed Riemannian manifold or the Euclidean space $(\R^n, g_n)$ with $g_n$ being the standard Euclidean metric in $\R^n$.
In this article, a closed manifold always refers to one that is compact and without boundary. The coefficient $w$ belongs to $BC^r(\M)$ with $r\in (0,\infty)$.
Briefly speaking, a function belongs to $BC^k(\M)$ if its derivatives up to order $k$ are continuous and bounded.
See Section~\ref{Section 2.1} for the precise definitions of function spaces.
For any $u \in C^\infty(\M,T \M^{\otimes \eta }\otimes T^* \M^{\otimes \tau })$, i.e. smooth tensor-valued functions, the linear differential operator $\sL$ is defined by
\begin{equation}
\label{def of sL}
\sL u = \nabla^* (\a \nabla u) \quad \text{with } \a \in BC^\infty(\M),
\end{equation}
where $\nabla^*$ is the formal adjoint of 
$$
\nabla: C^\infty(\M,T \M^{\otimes \eta }\otimes T^* \M^{\otimes \tau })\to C^\infty(\M,T \M^{\otimes \sigma }\otimes T^* \M^{\otimes (\tau+1) } ).
$$
In addition, we assume that there exists a constant $c>0$ such that
$$
c^{-1} \leq \a \leq c.
$$
Lower regularity of $\a$ or tensor-valued $\a$ can be imposed. For the sake of simplicity, we will confine our discussion to the case $\a \in BC^\infty(\M)$ in this manuscript.
Further, $\sigma \in (0,1)$ and $\sL^\sigma$ is understood as the fractional power of $\sL$.

Given a continuously and densely embedded Banach couple $X_1\overset{d}{\hookrightarrow}X_0$, assume that the linear operator $-\cA$, with $\dom(\cA)=X_1$, generates a strongly continuous analytic semigroup on $X_0$. For any $q\in (1,\infty)$, the following abstract Cauchy problem 
\begin{equation}
\label{S1: Cauchy problem}
\left\{\begin{aligned}
\partial_t u(t) +\cA u(t) &=f(t), &&t\geq 0\\
u(0)&=u_0 &&
\end{aligned}\right. 
\end{equation}
is said to have maximal $L_q$-regularity if for any 
$$
f\in L_q(\R_+, X_0) \quad \text{and} \quad u_0\in X_{1/q,q}:= (X_0,X_1)_{1-1/q,q},
$$
\eqref{S1: Cauchy problem} has a unique solution
$$
u\in L_q(\R_+, X_1) \cap H^1_q(\R_+, X_0) . 
$$
Here $(\cdot,\cdot)_{\theta,q}$ with $\theta\in (0,1)$ is the real interpolation method, cf. \cite[Example I.2.4.1]{Ama95}, and $H_{q}^{1}$ is the usual Bessel potential space.
Symbolically, we denote the maximal $L_q$-regularity property by
\begin{equation*}
\cA\in \mathcal{MR}_q(X_1,X_0).
\end{equation*}
In combination with an abstract Theorem by P. Cl\'ement and S. Li, cf. Theorem~\ref{ClementLi}, maximal $L_q$-regularity of \eqref{S1: MREQ 1} can be used to establish the local well-posedness of a large class of quasilinear parabolic equations including
\begin{equation}
\label{S1: Cauchy problem-quaslinear}
\left\{\begin{aligned}
\partial_t u(t) + w(u(t)) \sL^\sigma u(t) &= F(t,u(t)) &&\text{on}&&\M\times (0,\infty);\\
u(0)&=u_0 , &&\text{on}&&\M,
\end{aligned}\right. 
\end{equation}
where 
$$
w\in C^{1-}(U,BC^r(\M)) \text{ and } F\in C^{1-,1-}([0,T_0]\times U, H^s_p(\M, T \M^{\otimes \sigma }\otimes T^* \M^{\otimes \tau }))
$$ for some $U\subseteq X_{1/q,q}$ open and $T_0>0$, and 
$$
u_0\in B^{s+ 2\sigma -2\sigma/q }_{p,q}(\M, T \M^{\otimes \sigma }\otimes T^* \M^{\otimes \tau } ).
$$
Here $r>s\geq 0$ and $H^s_p(\M, T \M^{\otimes \sigma }\otimes T^* \M^{\otimes \tau }), B^{s+ 2\sigma -2\sigma/q }_{p,q}(\M, T \M^{\otimes \sigma }\otimes T^* \M^{\otimes \tau } )$ are tensor-valued Bessel potential and Besov spaces, respectively. See Section~\ref{Section 2.1} for details.

Particularly, \eqref{S1: Cauchy problem-quaslinear} includes the fractional porous medium equation of the form
\begin{equation*}
\left\{\begin{aligned}
\partial_t u +(-\Delta_g )^\sigma (|u|^{m-1}u )&=0 &&\text{on}&&\M\times (0,\infty);\\
u(0)&=u_0 &&\text{on}&&\M,
\end{aligned}\right.
\end{equation*}
which has been extensively investigated in the last decade. See \cite{BonFigOto17, BonFigVaz18, BonSireVaz15, BonVaz15, BonVaz16, PalRodVaz11, PalRodVaz12, VPGR, Vaz14, Vaz15} for instance. For an approach to the above problem in the framework of pseudodifferential operators, we also refer to the theory developed in \cite{Grubb1, Grubb2, Grubb3}.

In the realm of nonlocal parabolic problems, maximal regularity theory has been successfully applied to several models, cf. \cite{Ama05, Ama07, Gui09, GL07, G131, GuiShao17}.
However, when the leading term is nonlocal in space and has a variable multiplier, e.g. $w \sL^\sigma$ in \eqref{S1: MREQ 1}, an essential difficulty arises. 
Indeed, in the conventional approaches, the first step of obtaining the maximal regularity property of a parabolic problem is to study a constant-coefficient problem by means of proper harmonic analysis techniques. Then the perturbation theory of $R$-sectorial operators, cf. Definition~\ref{rsec}, and a freezing-of-coefficient method can be used to extend the maximal regularity property to equations/systems with variable coefficients.
When the leading nonlocal operator is accompanied by a variable coefficient, due to the spatial nonlocality, the standard freezing-of- coeffients method no longer applies. 

In recent work \cite{RoidosShao}, we have overcome the aforementioned difficulty and established the $L_q$-maximal regularity of \eqref{S1: MREQ 1} for the scalar case and $X_0=L_q(\M)$.
The main contribution of this article is to extend our previous results to tensor-valued equations and $X_0=H^s_p(\M)$ for arbitrary $s\geq 0$, i.e. Bessel potential spaces of arbitrary non-negative order.
As a direct consequence, immediate regularization of the corresponding nonlocal diffusion can be obtained. 
We expect that the methods in this and our previous work \cite{RoidosShao} will serve as the step stone to the study of more general nonlocal parabolic equations and systems.
 
This manuscript is organized as follows. In Section~\ref{Section 2.1}, we give the precise definitions of tensor-valued function spaces and present various properties of those spaces. 
In Section~\ref{Section 2.2}, we introduce some crucial functional analytic concepts in the study of maximal $L_q$-regularity like sectorial operators, bounded imaginary powers and $R$-sectorial operators.
In Section~\ref{Section 3}, we prove that the $H^s_p$-realization of the operator $c+\sL$ has bounded imaginary powers for sufficiently large constant $c>0$.
In Section~\ref{Section 4}, we obtain the $R$-sectoriality of the fractional operator $\sL^\sigma$. 
Section~\ref{Section 5} is devoted to the proof of the maximal $L_q$-regularity of \eqref{S1: MREQ 1}.
The key component of the proof consists of several novel commutator estimates.
In Section~\ref{Section 6}, we apply the maximal $L_q$-regularity result to two nonlocal quasilinear parabolic equations.
Lastly, the arguments for closed manifolds and Euclidean spaces are essentially the same. The only differences appear in Step 2 of the proof of Theorem~\ref{Thm: sectoriality Lap} and the proof of Lemma~\ref{Lem: localization est}.
Therefore, in this article, we will mainly focus on the case of closed manifolds and point out changes for $(\R^n, g_n)$ where necessary.

 \textbf{Notations:} 
For any two Banach spaces $X$ and $Y$, 
$$
X\doteq Y
$$ 
means that they are equal in the sense of equivalent norms. The notations 
$$
X\hookrightarrow Y, \quad X\xhookrightarrow{d} Y
$$ 
mean that $X$ is continuously embedded and further densely embedded into $Y$, respectively. $\L(X,Y)$ denotes the set of all bounded linear maps from $X$ to $Y$, and 
$$
\L(X):=\L(X,X). 
$$
Furthermore, $\Lis(X,Y)$ stands for the subset of $\L(X,Y)$ consisting of all bounded linear isomorphisms from $X$ to $Y$. Given a densely-defined operator $\cA$ in $X$, $\dom(\cA)$ stands for the domain of $\cA$. In addition, $\N$ denotes the set of nonzero natural numbers and $\Nz:=\N\cup \{0\}$.

\section{Preliminaries}\label{Section 2}

\subsection{Function spaces}\label{Section 2.1}

Given $\eta,\tau\in\N_0$, we define the $(\eta,\tau)$--tensor bundle of $\M$ as
$$
T^\eta_\tau \M:=T \M^{\otimes \eta }\otimes T^* \M^{\otimes \tau }, 
$$ 
where $T \M $ and $T^* \M $ are the tangent and the cotangent bundle of $\M$, respectively.
Let $\mathcal{T}^\eta_\tau \M$ denote the $C^\infty ( \M )$--module of all smooth 
sections of $T^\eta_\tau \M$.

Throughout the rest of this paper, we will adopt the following conventions.
\smallskip
\begin{mdframed}
\begin{itemize}
\item $(\M ,g)$ is either an $n$-dimensional closed manifold or $(\R^n,g_n)$.
\item $\p$ always denotes a generic point on $\M$.
\item $1<p,q<\infty$, $k\in \N_0$ and $r,s\geq 0$.
\item $\eta,\tau\in \N_0$, $V=V^{\eta}_{\tau}:=\{T^{\eta}_{\tau}\M, (\cdot|\cdot)_g\}$.
\item $\nabla$ is the extension of the Levi-Civita connection over $\cT^{\eta}_{\tau}{\M}$.
\item $|a|_g:=\sqrt{(a|\overline{a})_g}$ for all $a\in V$ is the (vector bundle) norm induced by $g$.
\end{itemize}
\end{mdframed}

The Sobolev space $W^{k}_p({\M},V)$ is defined as the completion of $C_{0}^{\infty}(\M,V)$, the space of smooth and compactly supported tensor-valued functions, in $L_{1,loc}(\M,V)$ with respect to the norm
\begin{align*}
\|\cdot\|_{k,p}: u\mapsto(\sum_{i=0}^{k}\||\nabla^{i}u|_{g}\|_p^p)^{\frac{1}{p}}.
\end{align*}
It is clear that $W^0_p(\M,V)\doteq L_p(\M,V)$. The Bessel potential spaces are defined by means of interpolation
\begin{align}\label{defofHspace}
H^s_p(\M,V):=
\begin{cases}
[W^k_p({\M},V),W^{k+1}_p(\M,V)]_{s-k} \quad &\text{ for }k<s<k+1,\\
[W^{k-1}_p(\M,V),W^{k+1}_p(\M,V)]_{1/2} &\text{ for }s=k\in\N,\\
L_p(\M,V) &\text{ for }s=0.
\end{cases}
\end{align}
Here $[\cdot,\cdot]_{\theta}$ is the complex interpolation method \cite[Example~I.2.4.2]{Ama95}. In particular, by \cite[Corollary~7.2~(i)]{Ama13}, $H^{k}_p({\M},V)\doteq W^{k}_p({\M},V)$. We denote the norm of $H^s_p(\M,V)$ by $\|\cdot\|_{s,p}$.

The following interpolation theory for Bessel potential spaces is proved in \cite[Corollary~7.2~(ii)]{Ama13}, \cite[Theorem~2.4.2]{TriebelbookI} and \cite[Theorem~7.4.4]{TriebelbookII}. 
\begin{prop}\label{Prop: interpolation}
Suppose that $0\leq s_0<s_1<\infty$ and $\theta\in (0,1)$. Then
$$
H^{s_\theta}_p(\M,V) \doteq [H^{s_0}_p(\M,V),H^{s_1}_p(\M,V)]_\theta
$$
and
$$
B^{s_\theta}_{p,q}(\M,V) \doteq (H^{s_0}_p(\M,V),H^{s_1}_p(\M,V))_{\theta,q},
$$
where $s_\theta=(1-\theta)s_0 + \theta s_1$.
\end{prop}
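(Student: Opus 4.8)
The plan is to deduce both identities from the \emph{reiteration theorems} for the complex and the real interpolation functors, exploiting that the definition \eqref{defofHspace} already exhibits $H^s_p(\M,V)$ as an intermediate space of a Sobolev couple. Since every interpolation functor commutes with retractions, whenever a concrete base case is needed it also suffices to work on the model space $\R^n$: via a finite atlas and a subordinate partition of unity, $H^s_p(\M,V)$ and $B^s_{p,q}(\M,V)$ are retracts of finite products of $H^s_p(\R^n,\R^N)$, resp. $B^s_{p,q}(\R^n,\R^N)$, with $N=n^{\eta+\tau}$; the tensor-bundle structure contributes nothing beyond this harmless passage from scalar- to $\R^N$-valued model spaces.

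For the complex-interpolation identity I would proceed in three steps. Step~1 (consistency at integers): check that the two prescriptions in \eqref{defofHspace} agree, i.e. $[W^{k-1}_p(\M,V),W^{k+1}_p(\M,V)]_{1/2}\doteq W^k_p(\M,V)$; this is itself a reiteration statement and is exactly \cite[Corollary~7.2~(i)]{Ama13} combined with the case $s=k$ of \eqref{defofHspace}. Step~2 (arbitrary integer endpoints): using that $C_0^\infty(\M,V)$ is dense in every $W^m_p(\M,V)$ — so the density hypothesis of Calder\'on's reiteration theorem for the complex method is met — upgrade the adjacent-integer case of \eqref{defofHspace} to $H^s_p(\M,V)\doteq[W^M_p(\M,V),W^N_p(\M,V)]_{(s-M)/(N-M)}$ for all integers $M\le s\le N$ with $M<N$; on $\R^n$ this is also read off directly from the dyadic decomposition. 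Step~3 (general endpoints): given $0\le s_0<s_1<\infty$, pick integers $M\le s_0<s_1\le N$ with $M<N$, write $H^{s_i}_p=[W^M_p,W^N_p]_{\alpha_i}$ with $\alpha_i=(s_i-M)/(N-M)$, and apply the complex reiteration theorem once more to obtain $[H^{s_0}_p(\M,V),H^{s_1}_p(\M,V)]_\theta\doteq[W^M_p,W^N_p]_{(1-\theta)\alpha_0+\theta\alpha_1}\doteq H^{s_\theta}_p(\M,V)$, since $(1-\theta)\alpha_0+\theta\alpha_1=(s_\theta-M)/(N-M)$.

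For the real-interpolation identity I would combine the complex scale identity just obtained with the reiteration theorem for the real method (cf. \cite{Ama95}): each $[W^M_p,W^N_p]_{\alpha}$ is of class $\mathcal{J}_\alpha$ and $\mathcal{K}_\alpha$ relative to the couple $(W^M_p,W^N_p)$, owing to the standard sandwiching $(W^M_p,W^N_p)_{\alpha,1}\hookrightarrow[W^M_p,W^N_p]_\alpha\hookrightarrow(W^M_p,W^N_p)_{\alpha,\infty}$. Hence, with $M\le s_0<s_1\le N$ as above and $\alpha_0<\alpha_1$, $(H^{s_0}_p(\M,V),H^{s_1}_p(\M,V))_{\theta,q}\doteq(W^M_p(\M,V),W^N_p(\M,V))_{(1-\theta)\alpha_0+\theta\alpha_1,q}=(W^M_p,W^N_p)_{(s_\theta-M)/(N-M),q}$, and the right-hand side is $B^{s_\theta}_{p,q}(\M,V)$ by the definition (equivalently, the classical Littlewood–Paley characterization) of the Besov scale as the real interpolation scale of the Sobolev spaces.

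The main obstacle is not any single estimate but keeping the reiteration machinery honestly applicable: one must verify that the partition-of-unity pull-back/push-forward operators are retractions \emph{simultaneously} for the whole scales $\{H^s_p\}_{s\ge0}$ and $\{B^s_{p,q}\}$, which rests on the diffeomorphism invariance of these spaces and on smooth compactly supported cut-offs being pointwise multipliers on all of them, and one must secure the density hypothesis in Step~2. On $(\R^n,g_n)$ all base facts follow from the dyadic decomposition, which realizes $H^s_p(\R^n)$ and $B^s_{p,q}(\R^n)$ as retracts of the weighted spaces $L_p(\R^n;\ell^2)$ and $\ell^q(L_p(\R^n))$ with $j$-th weight $2^{js}$, whose complex and real interpolation are classical; this is precisely the content quoted from \cite{Ama13,TriebelbookI,TriebelbookII}.
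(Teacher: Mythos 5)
Your argument is correct and is in substance the same route that the cited sources take: the paper itself offers no proof but simply invokes \cite[Corollary~7.2~(ii)]{Ama13}, \cite[Theorem~2.4.2]{TriebelbookI} and \cite[Theorem~7.4.4]{TriebelbookII}, and those results rest on exactly the combination you describe of retraction to $\R^n$, Calder\'on's reiteration for the complex method anchored at Sobolev integer endpoints, and real reiteration via the $\mathcal{J}_\theta/\mathcal{K}_\theta$ sandwich to land in the Besov scale. The points you flag as potential obstacles (simultaneous retractions across the whole scale, density for complex reiteration) are indeed the nontrivial hypotheses, and you have correctly identified how they are discharged.
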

Here, $B^{s}_{p,q}(\M,V) $ is a Besov space, cf. \cite{TriebelbookI, TriebelbookII}. We define 
$$
BC^k(\M,V):=(\{u\in{C^k(\M,V)}:\|u\|_{k,\infty}<\infty\},\|\cdot\|_{k,\infty} ),
$$
where $\|u\|_{k,\infty} :=\max_{0\leq i \leq k}\||\nabla^i u|_g\|_{\infty}$.
Set
$$
BC^\infty(\M,V):=\bigcap_k BC^k(\M,V)
$$
endowed with the conventional projective topology. Then
\begin{center}
$bc^k(\M,V):=$ the closure of $BC^{\infty}(\M,V)$ in $BC^k (\M,V)$.
\end{center}
Letting $k<s<k+1$, the H\"older space $BC^s(\M,V)$ is defined by
$$
BC^s(\M,V):=(bc^k(\M,V),bc^{k+1}(\M,V))_{s-k,\infty}.
$$
Here $(\cdot,\cdot)_{\theta,\infty}$ is again the real interpolation method.
 
When $s\in (0,1)$, a function $u\in BC^s(\M,V)$ iff $u\in BC(\M,V)=BC^0(\M,V)$ and
$$
\|u\|_{s,\infty}= \|u\|_\infty + \sup\limits_{\p,\q\in \M} \frac{|u(\p)-u(\q)|_g}{d(\p,\q)^s} < \infty,
$$
where $d=d(\p,\q)$ the geodesic distance between two points $\p,\q\in\M$ with respect to the metric $g$.
This alternative characterization is well-known for Euclidean spaces. When $(\M,g)$ is a closed manifold, it can be proved via localization.
 
\begin{prop}\label{embBCH}
{\em (i)} For any $s_2>s_1>s_0\geq 0$, we have
\begin{equation}
\label{Sobolev embedding 0}
H_{p}^{s_2}(\M,V) \hookrightarrow B^{s_1}_{p,q}(\M,V) \hookrightarrow H_{p}^{s_0}(\M,V).
\end{equation}
Suppose that $s>r+n/p$ and $r\geq 0$. Then 
\begin{equation}
\label{Sobolev embedding}
H_{p}^{s}(\M,V) \hookrightarrow BC^{r}(\M,V) \quad \text{and}\quad B_{p,q}^{s}(\M,V) \hookrightarrow BC^{r}(\M,V).
\end{equation}
{\em (ii)} Let $v\in H^{\nu}_p(\M,V)$. When $s\geq \nu $, given any $w\in BC^s(\M)$ we have
\begin{equation}
\label{pointwise mul 0}
\|wv\|_{H^{\nu}_p(\M,V)}\leq C\|w\|_{BC^s(\M)}\|v\|_{H^{\nu}_p(\M,V)},
\end{equation}
for certain $C>0$. Further, if $w\in H^{\xi+\frac{n}{q}}_{q}(\M)$ for some $\xi>\nu\geq0$ and $q\in(1,\infty)$, then
\begin{equation}
\label{pointwise mul 1}
\|wv\|_{H^{\nu}_p(\M,V)}\leq C_{0}\|w\|_{H^{\xi+\frac{n}{q}}_{q}(\M)}\|v\|_{H^{\nu}_p(\M,V)},
\end{equation}
for certain $C_{0}>0$. \\
{\em (iii)} Let $(\M,g)$ be an $n$-dimensional smooth closed Riemannian manifold. When $s> n/p$, $\F^s(\M)$ is a Banach algebra (up to an equivalent norm) for $\F\in \{H_p,B_{p,q}\}$, i.e. there exists $C_{1}>0$ depending only on $s$, $p$, $q$, and $n$, such that
$$
\|uv\|_{\F^s(\M)}\leq C_{1}\|u\|_{\F^s(\M)}\|v\|_{\F^s(\M)}, \quad \text{for all} \quad u,v\in \F^s(\M).
$$
In addition, $\F^s(\M)$ is closed under holomorphic functional calculus, that is if $v\in \F^s(\M)$ and $f$ is an analytic function in some neighborhood of $\mathrm{Ran}(v)=\{v(\p)\in \mathbb{C}\, |\, \p\in \M\}$, then $f(v)\in \F^s(\M)$. Furthermore, if $U$ is a bounded set in $\F^s(\M)$ consisting of functions $u$ satisfying $\mathrm{Re}(u)\geq c$, for certain $c>0$ depending on $U$, then the set $\{u^{-1}\, :\, u\in U\}$ is also bounded in $\F^s(\M)$.
\end{prop}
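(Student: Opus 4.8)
The plan is to reduce every assertion either to Proposition~\ref{Prop: interpolation} or to the corresponding statement on $\R^n$, passing to a closed manifold in the standard way (fix a finite atlas with subordinate partition of unity; in a chart a section of $V$ becomes a tuple of scalar functions and $\nabla$ becomes $\partial$ plus a zeroth order term with $BC^\infty$ coefficients, so the function-space scales are unaffected), while for $(\R^n,g_n)$ there is nothing to transfer; throughout I will also use that on a closed manifold the constants lie in $H^s_p(\M)$ and $B^s_{p,q}(\M)$ for all $s$, since $\M$ has finite volume. For part (i), the first chain in \eqref{Sobolev embedding 0} is formal: choosing $\theta\in(0,1)$ with $s_1=(1-\theta)s_0+\theta s_2$ (possible since $s_0<s_1<s_2$), Proposition~\ref{Prop: interpolation} identifies $B^{s_1}_{p,q}$ with $(H^{s_0}_p,H^{s_2}_p)_{\theta,q}$ and the elementary inclusions $X_1\hookrightarrow(X_0,X_1)_{\theta,q}\hookrightarrow X_0$ finish it. For \eqref{Sobolev embedding} I would quote the scalar Euclidean embeddings $H^s_p(\R^n),B^s_{p,q}(\R^n)\hookrightarrow BC^r(\R^n)$ for $s>r+n/p$ from \cite{TriebelbookI}, localize, and for the Besov case alternatively pick $r+n/p<s'<s$ so that $B^s_{p,q}\hookrightarrow H^{s'}_p\hookrightarrow BC^r$.

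Part (ii) splits on $\nu$. For $\nu=k\in\Nz$, \eqref{pointwise mul 0} is the Leibniz rule: since $s\ge k$ we have $w\in BC^s(\M)\hookrightarrow BC^k(\M)$, and $\nabla^i(wv)$ is a finite sum of terms $(\nabla^j w)\otimes(\nabla^{i-j}v)$ with $\nabla^j w$ bounded and $\nabla^{i-j}v\in L_p$, so $\|wv\|_{k,p}\lesssim\|w\|_{k,\infty}\|v\|_{k,p}$. For non-integer $\nu$, the estimate $\|wv\|_{H^\nu_p}\lesssim\|w\|_{BC^s}\|v\|_{H^\nu_p}$ for $s\ge\nu$ is a paramultiplication bound that I would record first on $\R^n$ from the pointwise multiplier theory in \cite{TriebelbookI, TriebelbookII} and then localize. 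For \eqref{pointwise mul 1}, given $w\in H^{\xi+n/q}_q(\M)$ with $\xi>\nu$, choose $r,\xi'$ with $\nu<r<\xi'<\xi$; then $H^{\xi+n/q}_q(\M)\hookrightarrow H^{\xi'+n/q}_q(\M)\hookrightarrow BC^r(\M)$ by \eqref{Sobolev embedding} (using $\xi'>r$), so $w\in BC^r(\M)$ with $r\ge\nu$, and \eqref{pointwise mul 0} applies.

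For part (iii), with $\M$ closed and $s>n/p$, \eqref{Sobolev embedding} with $r=0$ gives $\F^s(\M)\hookrightarrow BC(\M)$, so every $v\in\F^s(\M)$ is continuous with compact range, and the algebra estimate follows from the Euclidean bound $\|uv\|_{\F^s}\lesssim\|u\|_{\F^s}\|v\|_\infty+\|u\|_\infty\|v\|_{\F^s}$ together with $\F^s\hookrightarrow L_\infty$, localized; with $1\in\F^s(\M)$ this makes $\F^s(\M)$ a commutative unital Banach algebra up to renorming. The crux is the reciprocal statement: if $u\in\F^s(\M)$ with $|u|\ge c>0$, then $1/u\in\F^s(\M)$, quantitatively in terms of $\|u\|_{\F^s(\M)}$, $c$ and $\|u\|_\infty$; I would obtain it by writing $1/u=\psi\circ u$ with $\psi(z)=1/z$ holomorphic near the compact set $\mathrm{Ran}(u)\subseteq\{|z|\ge c\}$ and invoking a superposition-operator estimate in $\F^s(\M)$ --- on a closed manifold $\F^s(\M)$ contains the constants, so no vanishing condition on $\psi$ is needed, and the estimate is proved by localizing the Euclidean local superposition statements of \cite{TriebelbookI, TriebelbookII}. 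Granting this, $\sigma_{\F^s(\M)}(v)=\mathrm{Ran}(v)$, so $f(v)$ is given by the Cauchy integral $\frac1{2\pi i}\oint_\Gamma f(\lambda)(\lambda-v)^{-1}\,d\lambda$ over a cycle $\Gamma$ in the domain of analyticity of $f$ surrounding $\mathrm{Ran}(v)$ --- a Bochner integral of a continuous $\F^s(\M)$-valued integrand on the compact $\Gamma$ --- and it coincides with $f\circ v$ by evaluating the characters $\delta_\p\colon u\mapsto u(\p)$; and if $U\subseteq\F^s(\M)$ is bounded with $\mathrm{Re}(u)\ge c$ on $U$, then $R:=\sup_{u\in U}\|u\|_\infty<\infty$, the set $\{\lambda:\mathrm{Re}\,\lambda\ge c,\,|\lambda|\le R\}$ is compact and away from $0$, and the quantitative reciprocal bound makes $\{u^{-1}:u\in U\}$ bounded in $\F^s(\M)$.

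The step I expect to be the genuine obstacle --- the only one beyond interpolation, the Leibniz rule, and Banach-algebra generalities --- is the Euclidean paramultiplication and superposition-operator estimates behind parts (ii) and (iii); I would quote these from \cite{TriebelbookI, TriebelbookII} and instead invest the effort in making the chart-and-partition-of-unity transfer to $\M$, including the tensorial bookkeeping with the Christoffel symbols, uniform and self-contained.
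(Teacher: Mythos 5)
Your proposal is correct and lands on the same essential ingredients as the paper, but with some structural differences worth noting. For (i) the paper simply cites Triebel and Amann for \eqref{Sobolev embedding 0}--\eqref{Sobolev embedding}, whereas your derivation of the first chain directly from Proposition~\ref{Prop: interpolation} is cleaner and more self-contained. For (ii) and the algebra part of (iii) the paper relies entirely on Amann's pointwise-multiplication and algebra theorems \cite[Theorems 9.2, 9.3]{Ama13}, while you hand-derive the integer case by Leibniz and quote Triebel's multiplier/superposition theory for the rest --- equivalent content, different sourcing. The most substantive divergence is in the reciprocal step of (iii): the paper constructs the inverse quite explicitly, introducing the localized functions $u_j$, applying the Bourdaud--Sickel composition theorem to the compactly supported $\eta_j=u_j-u(\p_j)$ (where the vanishing-at-zero hypothesis of the Euclidean superposition theorem is automatically met), and then assembling $u^{-1}$ through the partition-of-unity identity $1=\bigl(\sum_j\phi_j u_j^{-1}\bigr)u$; you instead aim to apply a superposition estimate directly to $\psi(z)=1/z$ on a closed manifold. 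Your route is shorter, but you should be careful that after localizing to a chart the pushed-forward section is compactly supported, so the Euclidean composition theorems you invoke from \cite{TriebelbookI,TriebelbookII} require the outer function to vanish at $0$; the paper's subtraction of the constant $u(\p_j)$ is exactly what resolves this, and your phrase ``localizing the Euclidean local superposition statements'' would need to incorporate the same device to be complete. Once $u^{-1}\in\F^s(\M)$ is in hand, both proofs conclude via the same Cauchy integral, and your character-evaluation argument that it agrees with $f\circ u$ is a nice extra justification the paper leaves implicit.
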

\begin{proof}
(i) The embeddings~\eqref{Sobolev embedding 0} are shown in \cite[Theorem~7.4.2 (2) and (5)]{TriebelbookII} for scalar functions and the proof for tensor-valued case is similar.
The embeddings~\eqref{Sobolev embedding} follow from \cite[Theorem~14.2]{Ama13} and \eqref{Sobolev embedding 0}. 

(ii) \eqref{pointwise mul 0} follows from \cite[Theorem 9.2]{Ama13} by choosing the weight function $\rho\equiv 1$. 
Then \eqref{pointwise mul 1} is a direct consequence of \eqref{Sobolev embedding} and \eqref{pointwise mul 0}.

(iii) The fact that $\F^s(\M)$ is a Banach algebra is a direct consequence of \cite[Theorem 9.3]{Ama13} by choosing the weight function equal to one. For the closedness under the holomorphic functional calculus, by following the proof of \cite[Lemma 6.2]{RoSch}, let $ B_{j,R}=\B_\M(\p_j,R)$, $j\in\{1,\dots,N\}$ with $N\in\mathbb{N}$, be an open cover of $\M$ consisting of geodesic balls of radius $R>0$, centred at $\p_{j}\in\M$.
Moreover, let $\phi_{j}$, $j\in\{1,\dots,N\}$, be a subordinated partition of unity and assume that the closure of each $B_{j,3R/2}$, $j\in\{1,\dots,N\}$, is contained in a single coordinate chart. Take $u\in \F^s(\M)$ and assume that $u$ is pointwise invertible. Let $\omega:\mathbb{R}:\rightarrow [0,1]$ be a smooth nonincreasing function that equals $1$ on $[0, 1/2]$ and $0$ on $[3/4, +\infty)$, and define
$$
u_{j}(\p)=\omega\Big(\frac{d(\p,\p_j)}{2R}\Big)u(\p) + \Big(1-\omega\Big(\frac{d(\p,\p_j)}{2R}\Big)\Big)u(\p_j), \quad \p\in\M, \, j=1,\dots ,N.
$$
Choose $R$ sufficiently small such that 
$$
\|u(\cdot)-u(\p_{j})\|_{\infty}\leq \frac{1}{2}|u(\p_{j})|, \quad \text{for each} \quad j\in\{1,\dots,N\}.
$$
For each $j\in\{1,\dots,N\}$ the push-forward of $\eta_{j}=u_{j}-u(\p_{j})$ belongs to $\F^s(\mathbb{R}^{n})$. By \cite[Theorems 6 and 10]{BuSi}, so does the push-forward of $\eta_{j}(u(\p_{j})+\eta_{j})^{-1}$. Therefore, $\eta_{j}(u(\p_{j})+\eta_{j})^{-1}$ belongs to $\F^s(\M)$. We have 
$$
u_{j}^{-1}=(u(\p_{j})+\eta_{j})^{-1}=\frac{1}{u(\p_{j})}(1-\eta_{j}(u(\p_{j})+\eta_{j})^{-1}), \quad j\in\{1,\dots,N\},
$$
so that $u_{j}^{-1}\in \F^s(\M)$ for each $j$. Then, the identity 
$$
1=\sum_{j=1}^{N}\phi_{j}=\Big(\sum_{j=1}^{N}\phi_{j}u_{j}^{-1}\Big)u
$$
shows that $u^{-1}\in \F^s(\M)$ as well; recall here that $\phi_{j}u=\phi_{j}u_{j}$ for each $j$. The closedness under holomorphic functional calculus follows immediately by the expression 
$$
f(u)=\frac{1}{2\pi i}\int_{\Gamma}f(-\lambda)(u+\lambda)^{-1}d\lambda,
$$
where $\Gamma$ is a finite simple path around $\mathrm{Ran}(-u)$, in the area of holomorphicity of $f$. The boundedness of the set $\{u^{-1}\, |\, u\in U\}$ follows by the above construction.
\end{proof}

\subsection{Functional analytic tools}\label{Section 2.2}

We introduce some tools from function analysis. 
The reader may refer to the treatises \cite{Ama95}, \cite{DenHiePru03} and \cite{PruSim16} for more details of these concepts.

Let $X_{1}\overset{d}{\hookrightarrow}X_{0}$ be a continuously and densely injected complex Banach couple.
\begin{definition}[Dissipativity] 
A linear operators $\cA$ in $X_0$ with $\dom(\cA)=X_1$ is called {\em dissipative} if for all $\lambda>0$ and $x\in X_1$
$$
\|(\lambda -\cA)x\|\geq \lambda \|x\|.
$$
\end{definition}

\begin{definition}[Sectoriality]\label{secdef}
Let $\mathcal{P}(K,\theta)$, $K\geq1$, $\theta\in[0,\pi)$, be the class of all closed densely defined linear operators $\cA$ in $X_{0}$ such that 
$$
\Sigma_{\theta}:=\{\lambda\in\mathbb{C}\, : \, |\arg(\lambda)|\leq\theta\}\cup\{0\}\subset\rho{(-\cA)} 
$$
and
$$
(1+|\lambda|)\|(\cA+\lambda)^{-1}\|_{\mathcal{L}(X_{0})}\leq K, \quad \lambda\in \Sigma_{\theta}.
$$
The elements in $\mathcal{P}(\theta)=\bigcup\limits_{K\geq1}\mathcal{P}(K,\theta)$ are called {\em invertible sectorial operators of angle $\theta$} and for each $\cA\in\mathcal{P}(\theta)$ the constant $\inf\{K\, : \, \cA\in \mathcal{P}(K,\theta)\}$ is called {\em the sectorial bound of $\cA$}. 

Furthermore, denote by $\mathcal{S}(K,\theta)$ the supclass of $\mathcal{P}(K,\theta)$ such that if $\cA\in\mathcal{S}(K,\theta)$ then
$$
\Sigma_{\theta}\backslash\{0\}\subset\rho{(-\cA)} \quad \mbox{and} \quad |\lambda|\|(\cA+\lambda)^{-1}\|_{\mathcal{L}(X_0)}\leq K, \quad \lambda\in \Sigma_{\theta}\backslash\{0\}.
$$
The elements in $\mathcal{S}(\theta)=\bigcup\limits_{K\geq1}\mathcal{S}(K,\theta)$ are called {\em sectorial operators of angle $\theta$} and for each $\cA\in\mathcal{S}(\theta)\backslash\mathcal{P}(\theta)$ the constant $\inf\{K\, :\, \cA\in \mathcal{S}(K,\theta)\}$ is called {\em the sectorial bound of $\cA$}. 
\end{definition}

Recall that $\mathcal{P}(K,\theta)\subset \mathcal{P}(2K+1,\phi)$ for some $\phi\in(\theta,\pi)$, see, e.g. \cite[(III.4.6.4)-(III.4.6.5)]{Ama95}, and similarly for the class $\mathcal{S}(\theta)$. Hence, whenever $\cA\in \mathcal{P}(\theta)$ or $\cA\in \mathcal{S}(\theta)$ we can always assume that $\theta>0$. Moreover, for any $\rho\geq0$ and $\theta\in(0,\pi)$, let the counterclockwise oriented path 
$$
\Gamma_{\rho,\theta}=\{re^{\pm i\theta}\in\mathbb{C}\,:\,r\geq\rho\}\cup\{\rho e^{i\phi}\in\mathbb{C}\,:\,\theta\leq\phi\leq2\pi-\theta\}.
$$
The holomorphic functional calculus for sectorial operators in the class $\mathcal{P}(\theta)$ is defined by the Dunford integral formula, see, e.g. \cite[Theorem 1.7]{DenHiePru03}. A typical example are the complex powers; for $\mathrm{Re}(z)<0$ they are defined by
\begin{equation}\label{cp}
\cA^{z}=\frac{1}{2\pi i}\int_{\Gamma_{\rho,\theta}}(-\lambda)^{z}(\cA+\lambda)^{-1}d\lambda,
\end{equation}
where $\rho>0$ is sufficiently small. The family $\{\cA^{z}\}_{\mathrm{Re}(z)<0}$ together with $\cA^{0}=I$ is a strongly continuous analytic semigroup on $X_{0}$, see, e.g. \cite[Theorem III.4.6.2 and Theorem III.4.6.5]{Ama95}. Moreover, each $\cA^{z}$, $\mathrm{Re}(z)<0$, is an injection and the complex powers for positive real part $\cA^{-z}$ are defined by $\cA^{-z}=(\cA^{z})^{-1}$, see, e.g. \cite[(III.4.6.12)]{Ama95}. By Cauchy's theorem we can deform the path in \eqref{cp} and define the imaginary powers $\cA^{it}$, $t\in\mathbb{R}\backslash\{0\}$, as the closure of the operator
$$
\cA^{it}=\frac{\sin(i\pi t)}{i\pi t}\int_{0}^{+\infty}s^{it}(\cA+s)^{-2}\cA \, ds \quad \text{in}\quad \dom(\cA),
$$
see, e.g. \cite[(III.4.6.21)]{Ama95}. For the properties of the complex powers of sectorial operators, we refer to \cite[Theorem III.4.6.5]{Ama95}. Concerning the imaginary powers, the following property can be satisfied.

\begin{definition}[Bounded imaginary powers] Let $\cA\in\mathcal{P}(0)$ in $X_{0}$ and assume that there exist some $\delta,M>0$ such that $\cA^{it}\in \mathcal{L}(X_{0})$ and $\|\cA^{it}\|_{\mathcal{L}(X_{0})}\leq M$ when $t\in(-\delta,\delta)$. Then, $\cA^{it}\in \mathcal{L}(X_{0})$ for each $t\in\mathbb{R}$ and there exist some $\phi,\widetilde{M}>0$ such that $\|\cA^{it}\|_{\mathcal{L}(X_{0})}\leq \widetilde{M}e^{\phi|t|}$, $t\in\mathbb{R}${\em ;} in this case we say that {\em $\cA$ has bounded imaginary powers} and denote it by $\cA\in\mathcal{BIP}(\phi)$.
\end{definition}

The following property, stronger than the boundedness of the imaginary powers, can also be satisfied by operators in the class $\mathcal{P}(\theta)$.

\begin{definition}[Bounded $H^{\infty}$-calculus]
Let $\theta\in(0,\pi)$, $\phi\in[0,\theta)$, $\cA\in\mathcal{P}(\theta)$ and let $H_{0}^{\infty}(\phi)$ be the space of all bounded holomorphic functions $f:\mathbb{C}\backslash \Sigma_{\phi}\rightarrow \mathbb{C}$ satisfying 
$$
|f(\lambda)|\leq c \Big(\frac{|\lambda|}{1+|\lambda|^{2}}\Big)^{\eta}\quad \text{for any} \quad \lambda\in \mathbb{C}\backslash \Sigma_{\phi} 
$$
and some $c,\eta>0$ depending on $f$.
Any $f\in H_{0}^{\infty}(\phi)$ defines an element $f(-\cA)\in \mathcal{L}(X_{0})$ by 
\begin{align*}
f(-\cA)=\frac{1}{2\pi i}\int_{\Gamma_{\theta}}f(\lambda)(\cA+\lambda)^{-1} d\lambda.
\end{align*}
We say that the operator $\cA$ {\em has bounded $H^{\infty}$-calculus of angle $\phi$}, and we denote by $\cA\in \mathcal{H}^{\infty}(\phi)$, if there exists some $C>0$ such that
\begin{equation*}
\|f(-\cA)\|_{\mathcal{L}(X_{0})}\leq C\sup_{\lambda\in\mathbb{C}\backslash \Sigma_{\phi}}|f(\lambda)| \quad \mbox{for any} \quad f\in H_{0}^{\infty}(\phi).
\end{equation*}
\end{definition}

\begin{definition}[$R$-boundedness]\label{rsec}
A set $E\subset \mathcal{L}(X_{0})$ is called {\em $R$-bounded} if for every $T_{1},\dots,T_{N}\in E$ and $x_{1},\dots,x_{N}\in X_0$, $N\in \N$, we have
\begin{eqnarray*}
\|\sum_{k=1}^{N}\epsilon_{k}T_{k}x_{k}\|_{L_{2}((0,1),X_0)} \leq C \|\sum_{k=1}^{N}\epsilon_{k}x_{k}\|_{L_{2}((0,1),X_0)} 
\end{eqnarray*}
for certain $C>0$, where $\{\epsilon_{k}\}_{k=1}^{\infty}$ is the sequence of Rademacher functions. The infimum of all such constants $C>0$ is called {\em the $R$-bound of $E$}.

Denote by $\mathcal{R}(\theta)$, $\theta\in[0,\pi)$, the class of all operators $\cA\in \mathcal{S}(\theta)$ in $X_{0}$ such that the set $E=\{\lambda(\cA+\lambda)^{-1}\, :\, \lambda\in \Sigma_{\theta}\backslash\{0\}\}$ is $R$-bounded. If $\cA\in \mathcal{R}(\theta)$ then $\cA$ is called {\em $R$-sectorial of angle $\theta$} and the $R$-bound of $E$ is called {\em the $R$-sectorial bound of $\cA$}. 
\end{definition} 

Given any $T>0$, recall the embedding
\begin{equation}\label{inembmaxreg}
 L_q((0,T),X_1)\cap H^1_q((0,T),X_0) \hookrightarrow C([0,T],X_{1/q,q}),
\end{equation} 
cf. \cite[Theorem III.4.10.2]{Ama95}, and $X_{1/q,q}:= (X_0,X_1)_{1-1/q,q}$. 
Let $ J=(0,T) $ and
\begin{align*}
{\bE}_0(J) := L_q(J,X_0), \quad 
{\bE}_1(J) := L_q(J,X_1)\cap H^1_q(J,X_0) .
\end{align*}
For any $\cA\in \S(\theta)$, $\theta\in (\pi/2,\pi)$, with $\dom(\cA)=X_1$, 
$$
\cA\in \mathcal{MR}_q(X_1, X_0) 
$$
holds iff
$$
\displaystyle ( \partial_{t} + \cA, \gamma_0)\in \Lis ({\bE}_1(J),{\bE}_0(J)\times X_{1/q,q} ) ,
$$
where $\gamma_0$ is the trace map at $0$, i.e. $\gamma_{0}(u)=u(0)$.
 
If we restrict to the class of UMD (unconditionality of martingale differences property, see, e.g. \cite[Section III.4.4]{Ama95}) Banach spaces, then we have the following.

\begin{theorem}[{\rm Kalton and Weis, \cite[Theorem 6.5]{KaW} or \cite[Theorem 4.2]{Weis2}}]\label{KaWeTh}
If $X_{0}$ is UMD and $\cA\in\mathcal{R}(\theta)$ in $X_{0}$ with $\dom(\cA)=X_1$ and $\displaystyle \theta\in (\pi/2,\pi)$, then $\cA \in \mathcal{MR}_q(X_1,X_0)$ for all $q\in (1,\infty)$. 
\end{theorem}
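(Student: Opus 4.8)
This is the classical maximal-regularity theorem of Weis (and of Kalton--Weis), and the plan is to reproduce its proof by passing to the line and invoking an operator-valued Fourier multiplier theorem. By the characterization recorded above, $\cA\in\mathcal{MR}_q(X_1,X_0)$ means $(\partial_t+\cA,\gamma_0)\in\Lis(\bE_1(J),\bE_0(J)\times X_{1/q,q})$ on $J=(0,T)$. Replacing $\cA$ by $\cA+\omega$ with $\omega>0$ large is harmless here: the weight $u\mapsto e^{-\omega t}u$ is a norm-equivalence on $\bE_1(J)$ and $\bE_0(J)$ over the bounded interval $J$, and $\cA+\omega\in\mathcal{R}(\theta)$ is now invertible; since $\theta>\pi/2$ this forces $\sigma(-\cA)$ into an open sector in the left half plane bounded away from the imaginary axis, so $\|e^{-t\cA}\|\le Me^{-\beta t}$ for some $\beta>0$ and $G(t):=\mathbf{1}_{\{t>0\}}e^{-t\cA}$ belongs to $L_1(\R,\mathcal{L}(X_0))$. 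Hence the convolution $T\colon f\mapsto G*f$ is bounded on $L_q(\R,X_0)$ by Young's inequality, and $u:=Tf$ is the natural candidate solution of $\partial_t u+\cA u=f$ on the line.

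The heart of the argument is the bound $\|\cA u\|_{L_q(\R,X_0)}\le C\|f\|_{L_q(\R,X_0)}$; granting it, $\partial_t u=f-\cA u\in L_q(\R,X_0)$ is automatic. On the Fourier side $\widehat{\cA u}(\xi)=m(\xi)\widehat f(\xi)$ with
\[
m(\xi)=\cA(i\xi+\cA)^{-1}=I-i\xi(i\xi+\cA)^{-1},\qquad \xi\in\R\setminus\{0\}.
\]
Because $\theta>\pi/2$, the punctured imaginary axis lies in $\Sigma_\theta\setminus\{0\}$, so $\cA\in\mathcal{R}(\theta)$ says precisely that $\{i\xi(i\xi+\cA)^{-1}:\xi\ne0\}$ is $R$-bounded; hence so is $\{m(\xi):\xi\ne0\}$, being a singleton together with an $R$-bounded family. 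Differentiating, using $\tfrac{d}{d\xi}(i\xi+\cA)^{-1}=-i(i\xi+\cA)^{-2}$, gives
\[
\xi m'(\xi)=-\,i\xi(i\xi+\cA)^{-1}-\big(\xi(i\xi+\cA)^{-1}\big)^{2},
\]
which is again $R$-bounded, since scalar multiples, sums, and products of $R$-bounded families are $R$-bounded. Thus $m$ satisfies the hypotheses of the operator-valued Mikhlin-type Fourier multiplier theorem, which is available precisely because $X_0$ is UMD, and therefore $\cA T\in\mathcal{L}(L_q(\R,X_0))$ for every $q\in(1,\infty)$; see \cite{Weis2,KaW}.

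It remains to descend to $J$ and reinstate the initial value. Extending $f\in L_q(J,X_0)$ by zero, causality of $G$ gives $u(t)=\int_0^\infty e^{-s\cA}f(t-s)\,ds=0$ for $t<0$, so by the embedding \eqref{inembmaxreg} the restriction $u|_J$ lies in $\bE_1(J)$ with $u(0)=0$ and $\|u\|_{\bE_1(J)}\le C\|f\|_{\bE_0(J)}$; uniqueness follows from the injectivity of $\partial_t+\cA$ on $L_q(\R,X_0)$, which the Fourier transform yields after extending a homogeneous solution by zero to the past. For $u_0\in X_{1/q,q}=(X_0,X_1)_{1-1/q,q}$, set $v(t)=e^{-t\cA}u_0$; the semigroup description of the real interpolation space gives $\partial_t v=-\cA e^{-t\cA}u_0\in L_q(J,X_0)$, hence $v\in\bE_1(J)$ with $\|v\|_{\bE_1(J)}\le C\|u_0\|_{X_{1/q,q}}$ and $\gamma_0 v=u_0$. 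Writing the solution of the Cauchy problem as $u=v+w$ reduces it to the already solved problem $\partial_t w+\cA w=f$, $w(0)=0$. Together with the fact that \eqref{inembmaxreg} makes $(\partial_t+\cA,\gamma_0)$ bounded from $\bE_1(J)$ to $\bE_0(J)\times X_{1/q,q}$, these constructions exhibit a bounded inverse, so $(\partial_t+\cA,\gamma_0)\in\Lis(\bE_1(J),\bE_0(J)\times X_{1/q,q})$, i.e. $\cA\in\mathcal{MR}_q(X_1,X_0)$.

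The single genuinely hard ingredient is the operator-valued Fourier multiplier theorem: in a UMD space the scalar Mikhlin condition must be upgraded to $R$-boundedness of $\{m(\xi)\}$ and $\{\xi m'(\xi)\}$, and its proof rests on vector-valued Littlewood--Paley theory and the randomization estimates underlying the UMD property. Once that theorem, the permanence properties of $R$-boundedness, and the semigroup characterization of $X_{1/q,q}$ are granted, the symbol computation, the causality argument, and the splitting-off of the initial datum are all routine.
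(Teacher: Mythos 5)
The paper states this as a known theorem and gives no proof; it is simply cited from Kalton--Weis and Weis, so there is no in-paper argument to compare against. Your sketch is a faithful reconstruction of the standard argument from those references: shift by $\omega$ to reduce to an invertible operator on the whole line, identify the operator-valued multiplier $m(\xi)=\cA(i\xi+\cA)^{-1}=I-i\xi(i\xi+\cA)^{-1}$, observe that the punctured imaginary axis lies in $\Sigma_\theta\setminus\{0\}$ because $\theta>\pi/2$, verify $R$-boundedness of $\{m(\xi)\}$ and $\{\xi m'(\xi)\}$ via the algebra of $R$-bounded families, apply the operator-valued Mikhlin theorem (which requires UMD), and finally peel off the initial datum via the semigroup characterization of $(X_0,X_1)_{1-1/q,q}$ together with causality and the trace embedding. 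All the steps are correct and in the order one finds in Weis's paper, with the genuinely hard ingredient (the vector-valued multiplier theorem) appropriately black-boxed.
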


\section{Imaginary powers of elliptic operators}\label{Section 3}

Let $\sL_{s,\tau}^\eta$ denote the $H^s_p(\M,V)$-realization of $\sL$, where $\sL$ is defined in \eqref{def of sL}. Note that when $\eta+\tau>0$ and $\a\equiv 1$, $ \sL_{s,\tau}^\eta$ is the Bochner Laplacian, cf. \cite[Example 1.6]{Ama17}. 
In the sequel, we will omit the indices $ \eta,\tau $ and write $\sL_s $ whenever the choice of $V$ is clear from the context.
If, further, the choice of $s$ is immaterial, we will simply use $\sL$. 

The sectoriality of $\sL $ can be shown by modifying an argument in E. Davies \cite{Dav89}. This idea has been adopted in \cite{Shao16} to show the sectoriality of a class of singular operators acting on scalar function. 

\begin{theorem}\label{Thm: sectoriality Lap}
$\sL_{s,\tau}^\eta \in \cS(\theta)$ for some $\theta>\pi/2$.
\end{theorem}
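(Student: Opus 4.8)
The plan is to establish sectoriality of $\sL_s = \sL_{s,\tau}^\eta$ by the Davies-type argument, working first at the level of $X_0 = L_2(\M,V)$ where $\sL$ is essentially self-adjoint and nonnegative (since $\sL u = -\nabla^*(\a\nabla u)$ with $c^{-1}\le \a \le c$), and then propagating sectoriality to all $H^s_p(\M,V)$ by interpolation and duality. The $L_2$-picture is immediate: $\sL$ is a nonnegative self-adjoint operator, so $\sL \in \cS(\theta)$ for every $\theta < \pi$ with sectorial bound $1$; in particular $\theta$ can be taken $>\pi/2$. What requires work is the $L_p$- and then $H^s_p$-statement, because $\sL$ is not self-adjoint on $L_p$ for $p\neq 2$.

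First I would record the $L_1$- (or $L_\infty$-) bound on the semigroup. The Davies method (as in \cite{Dav89}, and in the singular-operator adaptation of \cite{Shao16}) proceeds by conjugating the semigroup generator by the exponential weight $e^{\rho\phi}$ for a suitable bounded Lipschitz function $\phi$ with $|\nabla\phi|_g \le 1$; the perturbed form $\sL_\rho u = e^{\rho\phi}\sL(e^{-\rho\phi}u)$ differs from $\sL$ by first- and zeroth-order terms whose coefficients are controlled by $\rho$ (uniformly in $\phi$, using $\a\in BC^\infty$ and $c^{-1}\le\a\le c$), so one obtains Gaussian/off-diagonal upper bounds on the heat kernel of $e^{-t\sL}$, hence $\|e^{-t\sL}\|_{\cL(L_1)}$ and $\|e^{-t\sL}\|_{\cL(L_\infty)}$ are bounded uniformly in $t>0$ (on a closed manifold this is clean; for $(\R^n,g_n)$ one uses the standard Euclidean version). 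By Riesz–Thorin interpolation with the $L_2$-contractivity, $e^{-t\sL}$ is a bounded analytic semigroup on every $L_p(\M,V)$, $1<p<\infty$, which yields $c+\sL_{0} \in \cS(\theta_p)$ on $L_p$ for some $\theta_p > \pi/2$; since $\sL$ annihilates constants only in a controlled way (the zeroth-order shift can be absorbed, or one simply notes $0\in\rho(-\sL)$ fails but $\sL$ is still sectorial in the class $\cS$, not $\cP$), one gets $\sL_0 \in \cS(\theta)$ on $L_p$. Then Proposition~\ref{Prop: interpolation} transports sectoriality from $L_p = H^0_p$ to $H^s_p$ for integer and then all real $s\ge 0$: the resolvent $(\sL+\lambda)^{-1}$ commutes with the definition via complex interpolation of the $W^k_p$-scale (this uses that $\sL$ has smooth coefficients, so it acts boundedly $H^{s+2}_p \to H^s_p$ and the resolvent estimate is stable under $[\cdot,\cdot]_\theta$), giving the uniform bound $(1+|\lambda|)\|(\sL_s+\lambda)^{-1}\|_{\cL(H^s_p)} \le K$ on a sector of angle $>\pi/2$. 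Finally the tensor-valued case ($V = T^\eta_\tau\M$) is handled exactly as the scalar case: the Davies weight argument and the Gaussian bounds go through for the connection Laplacian-type operator $\nabla^*\a\nabla$ on $V$ using the Weitzenböck-type structure and boundedness of the curvature terms on a closed manifold.

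The main obstacle I expect is the passage from $L_p$ to $H^s_p$ for non-integer $s$: one must check that the $H^s_p$-realization $\sL_s$ really is the interpolation-space realization and that its resolvent set and resolvent bounds survive complex interpolation of the $W^k_p$-scale. This is where one needs the smoothness $\a\in BC^\infty(\M)$ (so that $\sL$ maps $W^{k+2}_p$ to $W^k_p$ boundedly for all $k$, hence, by interpolation, $H^{s+2}_p \to H^s_p$), together with the fact — provable since the resolvents are consistent across the scale — that $(\sL+\lambda)^{-1}$ restricts/extends compatibly, so that the resolvent estimate on each $W^k_p$ interpolates to one on $H^s_p$. A secondary subtlety is bookkeeping the zeroth-order shift: the clean statement $\sL\in\cS(\theta)$ (rather than $c+\sL\in\cP(\theta)$) should be argued either by noting $\sL$ is injective on $L_p$ with dense range up to the kernel of constants — on closed manifolds $\sL$ has a one-dimensional kernel spanned by parallel sections, which one must either exclude by an assumption hidden in the setup or absorb by working with $\cS(\theta)$ as Definition~\ref{secdef} permits — or by shifting and later removing the constant; I would follow whichever convention the paper's Section~\ref{Section 3} adopts for $c+\sL$.
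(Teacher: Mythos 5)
Your overall architecture — self-adjointness and dissipativity on $L_2$, then $L_1/L_\infty$ bounds on the semigroup, Riesz--Thorin and Stein interpolation to push analyticity to $L_p$, and finally interpolation/induction to reach $H^s_p$ — is the same as the paper's. The one genuinely different ingredient is in the $L_1/L_\infty$ step: you propose the Davies--Gaffney exponentially weighted conjugation $e^{\rho\phi}\sL e^{-\rho\phi}$ to obtain Gaussian heat kernel bounds and deduce uniform $L_1$- and $L_\infty$-boundedness of $e^{-t\sL}$. The paper instead uses the Ouhabaz-type variational argument (testing the form $\fb$ against $(|u|_g-1)^+\sg u$ and exploiting the pointwise identity for $\nabla|u|_g$) to prove $L_\infty$-contraction directly, from which the $L_1$-bound follows by duality with no kernel estimates at all. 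Both routes are viable; the Ouhabaz form method is lighter machinery, requires no off-diagonal decay, and transfers to the tensor-valued setting with essentially one pointwise computation, whereas Gaussian bounds for a Bochner-type operator on a tensor bundle need a domination/Kato inequality argument that is not automatic and that your sketch only asserts. If you pursue the Gaussian-bound route you will need to supply that domination step carefully, and you should note that Gaussian upper bounds give uniform $L_1\to L_1$ and $L_\infty\to L_\infty$ bounds by integrating the kernel in one variable — this is what you need, but it is a small additional argument.

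Two small but real gaps in your sketch. First, you write ``by Riesz--Thorin interpolation with the $L_2$-contractivity, $e^{-t\sL}$ is a bounded analytic semigroup on every $L_p$'' — Riesz--Thorin gives $L_p$-\emph{boundedness} but not analyticity; the analytic extension to a sector in $t$ requires Stein interpolation of the analytic family $\{e^{-z\sL}\}$ between the full sector in $L_2$ and the half-line in $L_1$ (this is exactly \cite[Theorem~1.4.2]{Dav89}, which the paper cites), and this is what yields the angle $\phi \geq \frac{\pi}{2}(1-|2/p-1|)>0$ and hence $\theta=\pi/2+\phi>\pi/2$. Second, on the passage to $H^s_p$, you correctly identify the obstacle but stop short of the key verification: one must first prove the resolvent restriction identity $(\lambda+\sL_0)^{-1}|_{H^s_p}=(\lambda+\sL_s)^{-1}$ on the sector (which the paper does via elliptic regularity and iteration before interpolating), otherwise the uniform resolvent bound does not a priori transfer to the $H^s_p$-realization. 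You should state and justify this consistency property explicitly; without it the interpolation argument for non-integer $s$ is incomplete.
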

\begin{proof}
Choosing a local orthonormal frame of vector fields $\{e_j\}_{j=1}^n$ satisfying $\nabla_{e_k}e_j=0$ and with dual covector fields $\{e^j\}_{j=1}^n$. We set
$$
e_{(i)}=e_{i_1}\otimes \cdots \otimes e_{i_r}, \quad e^{(i)}=e^{i_1}\otimes \cdots \otimes e^{i_r},
$$ 
where $(i)=(i_1,\dots,i_r)\in \mathbb{J}^r:=\{1,\dots,n\}^r$ for any $r\in \Nz$. We express any $a\in V^{\eta+\tau}_{\tau+\eta} $ by 
$$
a=a^{(i)(s)}_{(j)(r)} e_{(i)(s)}\otimes e^{(j)(r)}
$$
where $(i),(r)\in \mathbb{J}^\eta$, $(j),(s)\in \mathbb{J}^\tau$ and $(i)(s)=(i_1,\dots,i_{\eta} , s_1,\dots,s_{\tau})$ and $(j)(r)=(j_1,\dots,j_{\tau} , r_1,\dots,r_{\eta})$. 

It follows from \cite[Appendix C, Equation~(1.35)]{Taylor2} that
\begin{align}
\notag \sL u= -\sum_{j=1}^n \left[ \a \nabla_{e_j} \nabla_{e_j} u + {\rm div}(\a e_j)\nabla_{e_j} u \right] 
=& - \a g^*\otimes \mathfrak{I} \cdot\nabla^2 u - \sum_{j=1}^n \nabla_{e_j} \a \nabla_{e_j} u \\
\label{exp of L}
=& - \a g^*\otimes \mathfrak{I} \cdot\nabla^2 u - \nabla \a \otimes \mathfrak{I} \cdot \nabla u 
\end{align} 
where ${\rm div}$ is the divergence operator defined in \cite[Chapter 10, Equation~(1.39)]{Taylor2} and $\mathfrak{I}\in \mathcal{T}^{\eta+\tau}_{\tau+\eta}\M$ is defined by $\mathfrak{I}^{(i)(s)}_{(j)(r)}=\delta^{(i)(s)}_{(j)(r)}$ with
$$
\delta^{(i)(s)}_{(j)(r)}=
\begin{cases}
1 \quad& \text{if } (i)=(r), (j)=(s) \\
0 & \text{otherwise}.
\end{cases}
$$
Further, the notation $\cdot$ denotes the complete contraction, cf. \cite[p. 4]{Ama17} and $g^*$ is the covariant metric induced by $g$ on $T^*\M$.
Then, it follows from \cite[Theorem~1.30 (i)]{Ama17} that for $k\in \N$, $\sL_{2k,\tau}^\eta$ is a closed operator with $\dom(\sL_{2k,\tau}^\eta)=H^{2k+2}_p(\M,V)$.
By the interpolation theory and \eqref{defofHspace}, we conclude that $\sL_{s,\tau}^\eta$ is a closed operator with $\dom(\sL_{s,\tau}^\eta)=H^{s+2}_p(\M,V)$ for all $s\geq 0$.


Step 1: $s=0$ and $p=2$

Since the $L_2(\M,V)$-realization of $ \sL $ is self-adjoint and positive semi-definite, we infer that $\sL_0$ is dissipative.
In particular, $-\sL_0 |_{H^2_2(M,V)} \in \cS(\theta)$ for any $\theta\in (0,\pi)$, see, e.g. \cite[Theorem III.4.6.7]{Ama95}, which implies that it generates an analytic semigroup on $L_2(\M,V)$.
Moreover, the Lumer-Phillips' theorem implies that this semigroup is an $L_2$-contraction.

$\sL_0 $ is associated with a quadratic form $\fb: H^1_2(\M,V)\times H^1_2(\M,V) \to \C$ defined by
$$
\fb(u,v)= (\a\nabla u | \nabla \overline{v} ) _g ,\quad u,v \in H^1_2(\M,V).
$$
In the rest of Step 1, we will follow the idea in the proof of \cite[Theorem~2.5]{Ouh92}.
First, direct computations show that $(|u|_g-1)^+\sg u \in H^1_2(\M,V) $ whenever $u\in H^1_2(\M,V)$,
where
\begin{align*}
\sg u:=
\begin{cases}
u/|u|_g, \quad & u\neq 0;\\
0, &u=0.
\end{cases}
\end{align*}
Choosing a local orthonormal frame of vector fields $\{e_j\}_{j=1}^n$ with $\nabla_{e_k}e_j=0$, then locally it holds that
\begin{align}\label{der of norm}
\nabla_{e_l} |u|_g= \frac{\nabla_{e_l} ( u | \overline{ u})_g}{2|u|_g} = \frac{( \nabla_{e_l} u | \overline{ u})_g}{2|u|_g} +\frac{ ( u | \overline{ \nabla_{e_l} u})_g}{2|u|_g} = \re \frac{( \nabla_{e_l} u | \overline{ u})_g }{|u|_g}. 
\end{align}
Therefore, 
$
\displaystyle \nabla |u|_g =\re \frac{\nabla u \cdot \overline{u} }{|u|_g}.
$
When $|u|_g \geq 1$, it follows from \eqref{der of norm} that
\begin{align*}
\nabla_{e_l} \left[(|u|_g-1)\sg u \right]=& \nabla_{e_l} u - \nabla_{e_l} \left( \frac{u}{|u|_g} \right) 
= \frac{ |u|_g-1}{|u|_g} \nabla_{e_l} u + \frac{1}{|u|_g^2}u \nabla_{e_l} |u|_g \\
=& \frac{ |u|_g-1}{|u|_g} \nabla_{e_l} u + u \re \frac{( \nabla_{e_l} u | \overline{ u})_g }{|u|_g^3}.
\end{align*}
We thus infer that
\begin{align*}
\nabla \left[(|u|_g-1)\sg u \right]
= \frac{ |u|_g-1}{|u|_g} \nabla u + \frac{ \re (\nabla u \cdot \overline{u}) \otimes u }{|u|_g^3} .
\end{align*}
This implies that when $|u|_g \geq 1$
\begin{align*}
 \re \Big( \a \nabla u | \nabla [(|u|_{g}-1)\overline{\sg u} ] \Big)_g 
= \a |\nabla u |_g^2 |\left(\frac{ |u|_g-1}{|u|_g} \right) + \a \frac{| \re (\nabla u \cdot \overline{u})|_g^2}{|u|_g^3} \geq 0.
\end{align*}
Therefore
$$
\re \fb(u , (|u|_g-1)^+\sg u ) \geq 0.
$$
Denote by $\langle \cdot , \cdot \rangle $ the inner product of $L_2(\M,V)$.
Given any $\lambda>0$, let $u=\lambda(\lambda+\sL )^{-1} f$ in the above inequality for some $f\in L_2(\M,V)\cap L_\infty(\M,V)$ with $\|f\|_\infty\leq 1$. Then
\begin{align*}
0&\leq \, \re \langle \sL [\lambda(\lambda+\sL )^{-1} f] , (|\lambda(\lambda+\sL)^{-1} f|_g-1 )^+\sg \overline{\lambda(\lambda+\sL )^{-1} f} \rangle \\
=& \lambda \re \!\! \int_\M \! [ (f | \sg \overline{\lambda(\lambda+\sL )^{-1} f} )_g \! - \! |\lambda(\lambda+\sL )^{-1} f|_g ] 
(|\lambda(\lambda+\sL )^{-1} \! f|_g \!- \! 1 )^+ d\mu_g.
\end{align*}
But $\|f\|_\infty\leq 1$ implies that $|(f | \sg \overline{\lambda(\lambda+\sL )^{-1} f} )_g|\leq 1$ a.e. and thus
\begin{equation}
\label{contraction eq}
\re \Big\{ \left[(f | \sg \overline{\lambda(\lambda+\sL )^{-1} f} )_g - |\lambda(\lambda+\sL)^{-1} f|_g \right] (|\lambda(\lambda+\sL )^{-1} f|_g-1 )^+ \Big\} \leq 0 
\end{equation}
holds a.e. when $|\lambda(\lambda+\sL)^{-1} f|_g>1$. When $|\lambda(\lambda+\sL)^{-1} f|_g\leq 1$, \eqref{contraction eq} clearly holds true.
We thus conclude that
$$
|\lambda(\lambda+\sL)^{-1} f|_g \leq 1 \quad \text{a.e.}.
$$
From the standard semigroup theory, it follows that
$$
e^{-t\sL } u= \lim\limits_{n\to \infty} \left[\frac{n}{t}\left(1+\frac{n}{t} \sL \right)\right]^{-n}u ,\quad u \in L_\infty(\M,V)\cap L_2(\M,V).
$$
We thus infer the $L_\infty$-contraction of the semigroup $\{e^{-t\sL }\}_{t\geq 0}$, i.e.
$$
\|e^{-t\sL } u \|_\infty \leq \|u\|_\infty ,\quad u \in L_\infty(\M,V)\cap L_2(\M,V).
$$

Step 2: $s=0$ and $p\in (1,\infty)$

The proof follows a classic idea in \cite[Chapter~1.4]{Dav89}, which was originally presented for scalar functions. By a duality argument, we can prove 
$$
\|e^{-t\sL } u \|_1 \leq \|u\|_1 ,\quad u \in L_1 (\M,V).
$$
Then, the Riesz-Thorin interpolation theorem implies that
$$
\|e^{-t\sL } u \|_p \leq \|u\|_p ,\quad u \in L_p(\M,V).
$$
When $(\M,g)$ is a closed manifold, the H\"older's inequality and the strong continuity of $\{e^{-t\sL}\}_{t\geq 0}$ in $L_2(\M,V)$ show that for all $ u \in L_2(\M,V) $
\begin{align*}
\lim\limits_{t\to 0^+} \| e^{-t\sL } u - u \|_1 \leq \lim\limits_{t\to 0^+} \| e^{-t\sL } u - u \|_2 ({\rm vol}(\M))^{1/2} =0.
\end{align*}
Since $L_2(\M,V) $ is dense in $L_1(\M,V) $, we thus obtain the strong continuity of 
$\{e^{-t\sL}\}_{t\geq 0}$ in $L_1(\M,V)$.

When $(\M,g)=(\R^n,g_n)$, following the proof of \cite[Theorem~1.4.1]{Dav89}, we can show that $\{e^{-t\sL}\}_{t\geq 0}$ is strongly continuous in $L_1(\M)$.
Note that for every $u=(u_1,\dots, u_n)\in L_1(\M,V)$ with $u_j\in L_1(\M)$
$$
e^{-t\sL}u=(e^{-t\sL} u_1,\dots, e^{-t\sL}u_n).
$$
This implies the strong continuity of 
$\{e^{-t\sL}\}_{t\geq 0}$ in $L_1(\M,V)$.

By the interpolation theory, we get the strong continuity in $L_p(\M,V)$ for $1<p<2$ and a standard duality argument yields the same for $2<p<\infty$. 
Then we can follow the Stein interpolation argument in \cite[Theorem~1.4.2]{Dav89} and prove that $\{e^{-t\sL}\}_{t\geq 0}$ can be extended to an analytic semigroup on $L_p(\M,V)$ in a sector $\Sigma_\phi$ with
$$
\phi \geq \frac{\pi}{2}\left(1- \left |\frac{2}{p}-1 \right| \right),\quad p\in (1,\infty).
$$
By the standard semigroup theory, this implies $\sL_0 \in \cS(\theta)$ with $\theta>\frac{\pi}{2}$.
 
Step 3: $s>0$

The proof for this case follows by an analogous argument as for the case $s>0$ and $1<p<\infty$ in the proof of \cite[Theorem~5.1]{RoidosShao}. 

First we will show that
\begin{equation}\label{resolvreestr}
 \Sigma_{\theta} \setminus \{0\} \subset \rho(-\sL_s) \quad \text{and} \quad (\lambda+\sL_0)^{-1}|_{H^{s}_p(\M,V)}=(\lambda+\sL_s)^{-1}, \quad \lambda\in \Sigma_{\theta}\setminus \{0\} ,
\end{equation}
where $\theta$ is the sectorial angle of $ \sL_0 $ asserted in Step 2. It is sufficient to verify the identities
$$
(\lambda+\sL_0)^{-1}(\lambda+\sL_s)=I \quad \text{and} \quad (\lambda+\sL_s)(\lambda+\sL_0)^{-1}=I, \quad \lambda\in \Sigma_{\theta} \setminus \{0\} ,
$$
on $H^{s+2}_p(\M,V)$ and $H^{s}_p(\M,V)$, respectively. The first one is trivial. For the second one, let $u\in H^{2}_p(\M,V)$ such that $(\lambda+\sL)u\in H^{s}_p(\M,V)$. If $s\in (0,2]$, then we have that $u,\sL u\in H^{s}_p(\M,V)$, i.e. $u$ belongs to the domain of $\sL$ in $H^{s}_p(\M,V)$, which implies that $u\in H^{s+2}_p(\M,V)$. The higher values of $s$ can be treated by iteration. 

Step 3a: $s\in 2\mathbb{N}$

We proceed by induction. Assume that the result holds for some $s\in \mathbb{N}$. For each $v\in H^{s+2}_p(\M,V)$, we have 
\begin{eqnarray*}
\lefteqn{\|\lambda(\lambda+\sL_{s+2})^{-1}v\|_{H^{s+2}_p(\M,V)}\,\,\,=\,\,\,\|\lambda(\lambda+\sL_{s})^{-1}v\|_{H^{s+2}_p(\M,V)}}\\
&\leq&C_{1}\Big(\|\lambda(\lambda+\sL_{s})^{-1}v\|_{H^{s}_p(\M,V)}+\|\sL_{s}(\lambda(\lambda+\sL_{s})^{-1}v)\|_{H^{s}_p(\M,V)}\Big)\\
&=&C_{1}\Big(\|\lambda(\lambda+\sL_{s})^{-1}v\|_{H^{s}_p(\M,V)}+\|\lambda(\lambda+\sL_{s})^{-1}\sL_{s}v\|_{H^{s}_p(\M,V)}\Big)\\
&\leq&C_{2}\Big(\|v\|_{H^{s}_p(\M,V)}+\|\sL_{s}v\|_{H^{s}_p(\M,V)}\Big) \leq C_{3}\|v\|_{H^{s+2}_p(\M,V)},
\end{eqnarray*}
for certain $C_{1},C_{2},C_{3}>0$ independent of $\lambda \in \Sigma_{\theta} \setminus \{0\} $. 

Step 3b: $s\in\mathbb{R}$

The results follows by Proposition~\ref{Prop: interpolation} and the interpolation theory. More precisely, for each $s\in (k,k+2)$, $k\in\Nz$, and each $\lambda\in \Sigma_{\theta}$, we have
$$
\lambda(\lambda+\sL )^{-1}\in \mathcal{L}(H^{k}_p(\M,V)) \quad \text{and} \quad \lambda(\lambda+\sL )^{-1}\in \mathcal{L}(H^{k+2}_p(\M,V)),
$$
with norm independent of $\lambda\in \Sigma_{\theta} \setminus \{0\} $.
So the required estimate is obtained by \cite[Theorem 2.6]{Lunar18} and Proposition~\ref{Prop: interpolation}. 
\end{proof}

\begin{remark}\label{bipandrsec}
For any $p\in (1,\infty)$, $\theta\in [0,\pi)$ and $\phi>0$ there exists a $c>0$ such that $c+\sL_0 \in \mathcal{R}(\theta)\cap\mathcal{BIP}(\phi)$. This follows by \cite[Corollary 10.4]{AmaHieSim94} in combination with \cite[Theorem 4]{ClPr}. 
Note that, by \eqref{exp of L}, the symbol of $\sL_0$ is defined by 
$$
[b\mapsto \a g^*\otimes \mathfrak{I}\cdot (\xi^{\otimes 2} \otimes b)]=[b\mapsto \a |\xi|_g^2 b ], \quad b\in V, \, \xi\in T^*\M.
$$ 
Therefore, $ \sL_0$ is $\phi$-elliptic in the sense of \cite[Theorem 10.3]{AmaHieSim94}, for arbitrary small $\phi>0$.
\end{remark}

\begin{prop}[$\mathcal{BIP}$ for higher $s$]\label{Prop: BIP higher order}
There exists a $c>0$ with the following property: for any $s\geq0$ and any $\phi>0$, we have $c+\sL_{s}\in \mathcal{BIP}(\phi)$.
\end{prop}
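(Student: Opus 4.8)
The plan is to bootstrap from the case $s=0$ supplied by Remark~\ref{bipandrsec}, using two structural facts about the operator $c+\sL_0$: that its fractional domain spaces reproduce the Bessel potential scale, and that its imaginary powers commute with its fractional powers. First I would fix $c>0$ as in Remark~\ref{bipandrsec}, so that $c+\sL_0\in\mathcal{BIP}(\phi)$ on $L_p(\M,V)$ for every $\phi>0$; since $\sL_0$ is $\phi$-elliptic for arbitrarily small $\phi$, after the shift one even has a bounded $H^{\infty}$-calculus of every positive angle, and in particular a single $c$ serves all $\phi$. In particular $c+\sL_0$ is invertible and sectorial, so its complex powers $(c+\sL_0)^{z}$ are available.

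The next step is to establish, for each $s\ge0$, the two identifications
\begin{equation}\label{eq:fracdom}
\dom\big((c+\sL_0)^{s/2}\big)\doteq H^{s}_p(\M,V),\qquad (c+\sL_0)\big|_{\dom((c+\sL_0)^{s/2})}=c+\sL_s .
\end{equation}
Because $c+\sL_0\in\mathcal{BIP}(\phi)$, its fractional domain spaces form a complex interpolation scale (a standard consequence of bounded imaginary powers; cf. \cite{Ama95,PruSim16}), so $\dom((c+\sL_0)^{s/2})\doteq[L_p(\M,V),\dom(c+\sL_0)]_{s/2}=[L_p(\M,V),H^{2}_p(\M,V)]_{s/2}$, and the latter equals $H^{s}_p(\M,V)$ by Proposition~\ref{Prop: interpolation}; this is the first relation in \eqref{eq:fracdom}. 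For the second, the elliptic-regularity fact already used in Step~3 of the proof of Theorem~\ref{Thm: sectoriality Lap} (see \eqref{resolvreestr}) --- namely that $u\in H^{2}_p(\M,V)$ with $\sL u\in H^{s}_p(\M,V)$ implies $u\in H^{s+2}_p(\M,V)$ --- shows that the part of $c+\sL_0$ in $H^{s}_p(\M,V)$ has domain $H^{s+2}_p(\M,V)$ and therefore coincides with $c+\sL_s$.

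Given \eqref{eq:fracdom}, the conclusion follows quickly. Since $c+\sL_0$ is invertible, $u\mapsto\|(c+\sL_0)^{s/2}u\|_p$ is an equivalent norm on $H^{s}_p(\M,V)$, and $(c+\sL_0)^{it}$ leaves $\dom((c+\sL_0)^{s/2})$ invariant with $(c+\sL_0)^{s/2}(c+\sL_0)^{it}u=(c+\sL_0)^{it}(c+\sL_0)^{s/2}u$ there. Hence, for $u\in H^{s}_p(\M,V)$,
$$
\|(c+\sL_0)^{it}u\|_{s,p}\le C\big\|(c+\sL_0)^{it}(c+\sL_0)^{s/2}u\big\|_p\le C\widetilde M e^{\phi|t|}\big\|(c+\sL_0)^{s/2}u\big\|_p\le C^{2}\widetilde M e^{\phi|t|}\|u\|_{s,p},
$$
with $\widetilde M,\phi$ the constants of $c+\sL_0\in\mathcal{BIP}(\phi)$ on $L_p(\M,V)$ and $C$ the norm-equivalence constant from \eqref{eq:fracdom}. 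Thus $(c+\sL_0)^{it}$ restricts to a bounded operator on $H^{s}_p(\M,V)$ with the required exponential bound; since forming imaginary powers commutes with passing to the part of a sectorial operator in a fractional domain space, this restriction is exactly $(c+\sL_s)^{it}$, whence $c+\sL_s\in\mathcal{BIP}(\phi)$ for every $\phi>0$, with $c$ independent of both $s$ and $\phi$.

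I expect the main obstacle to be not a single estimate but the careful handling of the abstract functional calculus: establishing \eqref{eq:fracdom} with genuinely equivalent norms (which uses the bounded imaginary powers of $c+\sL_0$, not merely its sectoriality), verifying that the part of $c+\sL_0$ in $H^{s}_p(\M,V)$ is precisely $c+\sL_s$, and checking $(c+\sL_0)^{it}\big|_{H^{s}_p(\M,V)}=(c+\sL_s)^{it}$ as closures of the corresponding resolvent integrals. A more elementary alternative avoiding the fractional-domain identification is to prove $\dom(\sL_0^{k})\doteq H^{2k}_p(\M,V)$ for $k\in\N$ by iterating elliptic regularity, deduce boundedness of $(c+\sL_0)^{it}$ on each $H^{2k}_p(\M,V)$ by the same commutation argument with $(c+\sL_0)^{k}$ in place of $(c+\sL_0)^{s/2}$, and then obtain the intermediate cases from $H^{s}_p(\M,V)\doteq[H^{2k}_p(\M,V),H^{2k+2}_p(\M,V)]_{(s-2k)/2}$ by complex interpolation of operators.
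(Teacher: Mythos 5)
Your argument is correct in essence but takes a different organizational route from the paper's. The paper proceeds by induction $s\mapsto s+2$: it uses the restriction identity $(c+\sL_{s+2})^{-\varepsilon+it}=(c+\sL_s)^{-\varepsilon+it}|_{H^{s+2}_p(\M,V)}$, estimates the $H^{s+2}_p$-norm by $\|\cdot\|_{s,p}+\|\sL_s\cdot\|_{s,p}$, and uses the commutation of the regularized power $(c+\sL_s)^{-\varepsilon+it}$ with $\sL_s$ to lift the BIP bound from $H^s_p$ to $H^{s+2}_p$; non-integer $s$ is then handled by complex interpolation together with \cite[Lemma III.4.7.4]{Ama95}. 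Your primary argument instead identifies $H^s_p(\M,V)$ with the fractional domain $\dom\bigl((c+\sL_0)^{s/2}\bigr)$ in one step and transfers the $L_p$-bound on $(c+\sL_0)^{it}$ through the commutation with $(c+\sL_0)^{s/2}$. Both proofs ultimately rest on the same two structural facts (elliptic regularity matching domains to the Bessel potential scale, and commutation of imaginary powers with powers of $\sL$); yours is arguably more streamlined because it does not pass through the regularized powers $(c+\sL_0)^{-\varepsilon+it}$, at the price of needing the fractional-domain identification to hold with genuinely equivalent norms.

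One caveat: the explicit formula you invoke, $\dom\bigl((c+\sL_0)^{s/2}\bigr)\doteq[L_p(\M,V),\dom(c+\sL_0)]_{s/2}$, is only meaningful for $s/2\in(0,1)$, i.e.\ $s<2$. For $s\geq2$ you must either iterate, i.e.\ first establish $\dom\bigl((c+\sL_0)^{m}\bigr)\doteq H^{2m}_p(\M,V)$ for integers $m$ by repeated elliptic regularity and then write $\dom\bigl((c+\sL_0)^{s/2}\bigr)\doteq[L_p(\M,V),\dom((c+\sL_0)^{m})]_{s/(2m)}$ with $m\geq s/2$, or invoke the reiteration property of the fractional-power scale. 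You essentially concede this in your closing paragraph: the ``more elementary alternative'' you sketch there — $\dom((c+\sL_0)^{k})\doteq H^{2k}_p$, commute $(c+\sL_0)^{it}$ with $(c+\sL_0)^{k}$, interpolate — is in substance the paper's own proof, merely phrased through integer powers rather than through the domain norm $\|\cdot\|_{s,p}+\|\sL_s\cdot\|_{s,p}$. So the gap in the main line of argument is real but minor, and you already carry the repair.
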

\begin{proof}
We proceed by induction and interpolation. For $s=0$, the result holds true due to Remark \ref{bipandrsec}. Assume that the statement holds for certain $s\in \Nz$. Let $c>0$ be as in Remark \ref{bipandrsec}. 
By Remark \ref{bipandrsec} and the argument leading to \eqref{resolvreestr}, for any $\varepsilon>0$ and any $t\in\mathbb{R}$, we have that
\begin{equation}\label{frecpowrestr}
(c+\sL_{\nu})^{-\varepsilon+it}=(c+\sL_{\widetilde{\nu}})^{-\varepsilon+it}|_{H^{\nu}_{p}(\M,V)},
\end{equation}
where $\nu\geq \widetilde{\nu}$. Let $u\in H^{s+2}_{p}(\M,V)$. By the boundedness of the imaginary powers of $c+\sL_{s}$, in particular by \cite[Lemma III.4.7.4 (ii)]{Ama95}, we have that
\begin{eqnarray*}
\lefteqn{\|(c+\sL_{s+2})^{-\varepsilon+it}u\|_{H^{s+2}_{p}(\M,V)}\,\,\,=\,\,\,\|(c+\sL_{s})^{-\varepsilon+it}u\|_{H^{s+2}_{p}(\M,V)}}\\
&\leq&C_{1}\Big(\|(c+\sL_{s})^{-\varepsilon+it}u\|_{H^{s}_{p}(\M,V)}+\|(c+\sL_{s})^{-\varepsilon+it} \sL_{s}u\|_{H^{s}_{p}(\M,V)}\Big)\\
&\leq&C_{1}\|(c+\sL_{s})^{-\varepsilon+it}\|_{\mathcal{L}(H^{s}_{p}(\M,V))}\Big(\|u\|_{H^{s}_{p}(\M,V)}+\|\sL_{s}u\|_{H^{s}_{p}(\M,V)}\Big)\\
&\leq&C_{2}e^{\phi|t|}\|u\|_{H^{s+2}_{p}(\M,V)},
\end{eqnarray*}
for certain $C_{1},C_{2}>0$ independent of $\varepsilon$ and $t$. Hence, from \cite[Lemma III.4.7.4~(i)]{Ama95}, we deduce that $c+\sL_{s+2}\in \mathcal{BIP}(\phi)$. 

By \cite[Theorem 2.6]{Lunar18}, Proposition~\ref{Prop: interpolation} and \eqref{frecpowrestr}, for each $\rho\in (0,1)$ we have
\begin{eqnarray*}
\lefteqn{\|(c+\sL_{s+\rho})^{-\varepsilon+it}\|_{\mathcal{L}(H^{s+\rho}_{p}(\M,V))}}\\
& \leq & C_3 \Big(\|(c+\sL_{s})^{-\varepsilon+it}\|_{\mathcal{L}(H^{s}_{p}(\M,V))}\Big)^{1-\rho}\Big(\|(c+\sL_{s+2})^{-\varepsilon+it}\|_{\mathcal{L}(H^{s+2}_{p}(\M,V))}\Big)^{\rho},
\end{eqnarray*} 
for certain $C_3>0$ independent of $\varepsilon$ and $t$. Hence, again by \cite[Lemma III.4.7.4 (ii)]{Ama95}, we obtain that
$$
\|(c+\sL_{s+\rho})^{-\varepsilon+it}\|_{\mathcal{L}(H^{s+\rho}_{p}(\M,V))}\leq C_4 (e^{\phi|t|})^{1-\rho}(e^{\phi|t|})^{\rho},
$$
for some $C_4>0$ independent of $\varepsilon$ and $t$. The result then follows by \cite[Lemma III.4.7.4 (i)]{Ama95}.
\end{proof}

\section{The Fractional Powers of $\sL$}\label{Section 4}

Following the discussion in \cite[Section~4]{RoidosShao2}, we can show for any $c\geq 0$ 
\begin{align}
\label{S4: fractional Delta def}
J^\sigma_c u:&=\frac{\sin(\pi\sigma)}{\pi} \int_0^\infty x^{\sigma-1} ( c+\sL^\eta_{s,\tau})(x+c+\sL^\eta_{s,\tau})^{-1} u\, dx
\end{align}
is well-defined for all $u\in H^{s+2}_p(\M,V)=\dom(\sL^\eta_{s,\tau})$. 
Indeed, we have proved that a formula similar to \eqref{S4: fractional Delta def}, \cite[(4.1)]{RoidosShao2}, holds true for $c=0$ and an operator $-\underline{\Delta}_{F,p}$. See \cite[pp. 15-17]{RoidosShao2}. The proof only relies on the fact that $-\underline{\Delta}_{F,p} \in \cS(\theta)$ for some $\theta>0$.

Note that \eqref{S4: fractional Delta def} is exactly Balakrishnan's formula for fractional powers of dissipative operators.
By \cite[(2.7)]{Bal60},
\begin{center}
$( c+\sL^\eta_{s,\tau})^\sigma$ is the smallest closed extension of $J^\sigma_c $.
\end{center} 
Therefore, \eqref{S4: fractional Delta def} converges for all $u\in \dom((c +\sL^\eta_{s,\tau})^\sigma)$ in $H^s_p(\M,V)$. 
The domain $\dom((c+\sL^\eta_{s,\tau})^\sigma)$ is independent of $c\geq 0$, cf. \cite[Lemma~2.3.5]{Tan}.

Due to Proposition~\ref{Prop: BIP higher order}, for certain $c>0$, the operator $c+\sL^\eta_{s,\tau}$ has bounded imaginary powers.
By Proposition~\ref{Prop: interpolation}, \cite[(I.2.9.8)]{Ama95} and \cite[Lemma 2.3.5]{Tan}, we infer that
$$
\dom((\sL^\eta_{s,\tau})^\sigma) \doteq H^{s+2\sigma}_p(\M,V).
$$

\begin{prop} 
For any $s\geq0$ and any $\theta>0$, there exists a $c>0$ such that $ c + (\sL_s)^\sigma \in \mathcal{R}(\theta)$.
\end{prop}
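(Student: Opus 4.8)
The strategy is to deduce the statement from the corresponding property for the auxiliary invertible operator $(c+\sL_s)^\sigma$, obtained from the shifted operator $c+\sL_s$ of Section~\ref{Section 3}, and then to absorb the difference between $c+(\sL_s)^\sigma$ and $(c+\sL_s)^\sigma$ by a perturbation argument. \emph{Step~1: $R$-sectoriality of $(c+\sL_s)^\sigma$.} Fix $c>0$ as in Proposition~\ref{Prop: BIP higher order}, so that $c+\sL_s\in\mathcal{BIP}(\phi)$ for every $\phi>0$. By the composition rule for complex powers of a sectorial operator, cf. \cite[Theorem~III.4.6.5]{Ama95}, $\bigl((c+\sL_s)^\sigma\bigr)^{it}=(c+\sL_s)^{i\sigma t}$ for $t\in\mathbb{R}$, whence $(c+\sL_s)^\sigma\in\mathcal{BIP}(\sigma\phi)$ for every $\phi>0$. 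Thus $(c+\sL_s)^\sigma$ is an invertible sectorial operator on $H^s_p(\M,V)$, with domain $H^{s+2\sigma}_p(\M,V)$ (as recalled before the statement), having bounded imaginary powers of arbitrarily small angle. Since $H^s_p(\M,V)$ is a UMD space --- being, up to isomorphism, a complemented subspace of $L_p(\R^n,\R^N)$ for some $N$ (by localization together with the identification $H^s_p(\R^n,\R^N)\cong L_p(\R^n,\R^N)$), and UMD passes to closed subspaces --- the Clément--Prüss transference principle, cf. \cite[Theorem~4]{ClPr} and \cite[Chapter~4]{DenHiePru03}, yields that an operator on a UMD space with bounded imaginary powers of angle $\phi<\pi$ is $R$-sectorial of $R$-angle at most $\phi$. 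Letting $\phi\downarrow0$ gives
\begin{equation*}
(c+\sL_s)^\sigma\in\mathcal{R}(\theta)\qquad\text{for every }\theta\in[0,\pi).
\end{equation*}
(Alternatively, for $s=0$ one may start from Remark~\ref{bipandrsec} and invoke the stability of $R$-sectoriality under fractional powers, with the $R$-angle scaled by $\sigma$; for $s>0$ one combines Step~1 with the UMD property.)

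\emph{Step~2: a lower order remainder.} By the Balakrishnan representation \eqref{S4: fractional Delta def} and the resolvent identity $(x+\sL_s)^{-1}-(x+c+\sL_s)^{-1}=c(x+\sL_s)^{-1}(x+c+\sL_s)^{-1}$, on the common domain $\dom\bigl((\sL_s)^\sigma\bigr)=\dom\bigl((c+\sL_s)^\sigma\bigr)=H^{s+2\sigma}_p(\M,V)$ one has
\begin{equation*}
B:=(\sL_s)^\sigma-(c+\sL_s)^\sigma=-\frac{c\sin(\pi\sigma)}{\pi}\int_0^\infty x^{\sigma}(x+c+\sL_s)^{-1}(x+\sL_s)^{-1}\,dx.
\end{equation*}
Estimating the integrand in $\mathcal{L}\bigl(H^s_p(\M,V)\bigr)$ --- it is $O(x^{\sigma-1})$ as $x\to0^+$, using invertibility of $c+\sL_s$ and the resolvent bound for $\sL_s$ on the positive half-line from Theorem~\ref{Thm: sectoriality Lap}, and $O(x^{\sigma-2})$ as $x\to\infty$ --- shows that $B\in\mathcal{L}\bigl(H^s_p(\M,V)\bigr)$; in fact $B$ is of order $2\sigma-2<0$, but boundedness on $H^s_p(\M,V)$ is all that will be used.

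\emph{Step~3: conclusion by perturbation.} For $c'>0$ write $c'+(\sL_s)^\sigma=\bigl(c'+(c+\sL_s)^\sigma\bigr)+B$. For fixed $\theta\in(0,\pi)$ the sector $\Sigma_\theta$ is a convex cone with $|c'+\lambda|\geq\mathrm{dist}(-c',\Sigma_\theta)\geq c'\sin\theta$ for every $\lambda\in\Sigma_\theta$, so, since $(c+\sL_s)^\sigma$ is invertible sectorial of angle $\theta$,
\begin{equation*}
\bigl\|\bigl(c'+(c+\sL_s)^\sigma+\lambda\bigr)^{-1}\bigr\|_{\mathcal{L}(H^s_p(\M,V))}\leq\frac{C_\theta}{c'\sin\theta},\qquad\lambda\in\Sigma_\theta,
\end{equation*}
while $\{\lambda\bigl(c'+(c+\sL_s)^\sigma+\lambda\bigr)^{-1}:\lambda\in\Sigma_\theta\}$ is $R$-bounded with $R$-bound independent of $c'\geq0$, by Step~1 and the convexity of the cone $\Sigma_\theta$. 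Hence, for $c'$ large enough, $\sup_{\lambda\in\Sigma_\theta}\|B\bigl(c'+(c+\sL_s)^\sigma+\lambda\bigr)^{-1}\|_{\mathcal{L}(H^s_p(\M,V))}<\tfrac{1}{2}$, and a Neumann series argument --- using that sums and products of $R$-bounded families are $R$-bounded --- shows $c'+(\sL_s)^\sigma\in\mathcal{R}(\theta)$. Since $\theta\in(0,\pi)$ was arbitrary, this proves the proposition, with the role of $c$ played by a sufficiently large $c'$.

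\emph{Expected main obstacle.} The conceptual step is Step~1, converting bounded imaginary powers into $R$-sectoriality on $H^s_p(\M,V)$ and exploiting that these powers have arbitrarily small angle so that the resulting $R$-angle is again arbitrarily small; this rests on the UMD property of Bessel potential spaces and on a transference-type result. The technically delicate point, however, is Step~3: the operator in the statement is $c+(\sL_s)^\sigma$ rather than $(c+\sL_s)^\sigma$, and these differ by the bounded---but not small---operator $B$. The cure is the shift by a large $c'$, which forces the resolvent of $c'+(c+\sL_s)^\sigma$ to be uniformly small on $\Sigma_\theta$; this works precisely because, for $\theta<\pi$, the sector $\Sigma_\theta$ stays uniformly away from the negative real axis.
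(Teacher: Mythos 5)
Your proof takes essentially the same route as the paper's: obtain $R$-sectoriality of the auxiliary invertible operator $(c+\sL_s)^\sigma$ from the bounded imaginary powers of $c+\sL_s$ (via Cl\'ement--Pr\"uss transference on the UMD space $H^s_p(\M,V)$, precisely the ingredients the paper combines in Remark~\ref{bipandrsec} and Proposition~\ref{Prop: BIP higher order}), identify the difference $B=(\sL_s)^\sigma-(c+\sL_s)^\sigma$ as a bounded operator through the Balakrishnan formula and a resolvent identity, and then absorb $B$ by a perturbation argument. The only substantive difference is bookkeeping: you fix $c$ once and for all, so that $B$ is a single fixed bounded operator, and then shift by a separate large $c'$; the paper instead lets $c$ grow and shifts by $c^{\sigma+\xi}$, tracking the $c$-dependent bound $\|B\|\lesssim c^\sigma$ against the shift $c^{\sigma+\xi}$. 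Your decoupling of the two parameters is a bit cleaner conceptually.

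There is, however, a small but genuine gap in your Step~3. To conclude $c'+(\sL_s)^\sigma\in\mathcal{R}(\theta)$ from the Neumann series, you need the family $\{B(c'+(c+\sL_s)^\sigma+\lambda)^{-1}:\lambda\in\Sigma_\theta\}$ to be $R$-bounded with $R$-bound strictly less than $1$, not merely uniformly bounded in operator norm; otherwise the $R$-bounds of the terms $(B(\cdots)^{-1})^n$ need not be summable, since $R$-bounds in general dominate operator norms. What you actually display is only a uniform norm estimate. The fix is short: write $(c'+(c+\sL_s)^\sigma+\lambda)^{-1}=(c'+\lambda)^{-1}\cdot(c'+\lambda)\bigl((c+\sL_s)^\sigma+(c'+\lambda)\bigr)^{-1}$; the second factor runs over an $R$-bounded family uniformly in $c'\geq0$ by Step~1, while $|c'+\lambda|^{-1}\leq(c'\sin\theta)^{-1}$ on $\Sigma_\theta$, so Kahane's contraction principle yields that $\{(c'+(c+\sL_s)^\sigma+\lambda)^{-1}:\lambda\in\Sigma_\theta\}$ has $R$-bound $\lesssim(c'\sin\theta)^{-1}$. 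Since $B$ is a single bounded operator, the composed family has $R$-bound $\lesssim\|B\|(c'\sin\theta)^{-1}$, which is small for $c'$ large, and the Neumann series then converges in $R$-bound as well as in norm. Alternatively, at this point one can simply invoke the Kunstmann--Weis perturbation theorem \cite[Theorem~1]{KuWe}, as the paper does, which packages exactly this observation.
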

\begin{proof}
We follow the ideas in Step 2 of the proof of \cite[Theorem 6.2]{RoidosShao}. Let $c_{1}>0$ be fixed and sufficiently large. By \cite[Theorem 1.1]{RoidosShao}, $(c+\sL_{s})^{\sigma}\in \mathcal{R}(\theta)$ for each $c\geq c_{1}$. Moreover, by \cite[Lemma 2.6]{RoSch} and the estimate in Part (i) in the proof of \cite[Theorem 1.1]{RoidosShao}, the $R$-sectorial bound of $( c+ \sL_{s})^{\sigma}$ is uniformly bounded in $c\geq c_{1}$. Let $\xi>1$ be fixed. Again by \cite[Lemma 2.6]{RoSch}, the operator $(c+\sL_{s})^{\sigma}+c^{\sigma+\xi}$ is $R$-sectorial and its $R$-sectorial bound can be chosen uniformly bounded in $c\geq c_{1}$. By \cite[(2.18)]{RoidosShao} we have
$$
\|((c+\sL_{s})^{\sigma}-(\sL_{s})^{\sigma})((c+\sL_{s})^{\sigma}+c^{\sigma+\xi})^{-1}\|_{\mathcal{L}(H^{s}_p(\M,V))}\leq C\frac{c^{\sigma}}{c^{\sigma+\xi}}
$$
for certain $C>0$ only depending on the sectorial bound of $(c+\sL_{s})^{\sigma}\in \mathcal{S}(0)$ and $\sigma$. By noting that
$$
(\sL_s)^{\sigma}+c^{\sigma+\xi}=(c+\sL_s)^{\sigma}+c^{\sigma+\xi}+(\sL_s)^{\sigma}-(c+\sL_{s})^{\sigma},
$$
after taking $c\geq c_{1}$ sufficiently large, we obtain the result by perturbation, see, e.g. \cite[Theorem 1]{KuWe}.
\end{proof}
 
\section{$L_q$-Maximal regularity}\label{Section 5}

Suppose that 
\begin{equation}\label{w-BC}
w\in BC^r(\M) \quad \text{for some}\quad r\in(0,\infty), 
\end{equation}
together with the following convention: if $\M=(\R^n, g_n)$, then we assume that there exists a constant $w_{\infty}>0$ such that 
\begin{equation}\label{winfinity}
\|w -w_{\infty}\|_{L_\infty(\R^n\setminus \B_\M(0,\widetilde{R}))}\rightarrow 0 \quad \text{as} \quad \widetilde{R}\rightarrow\infty.
\end{equation}
Assume, in addition, that there exists a constant $c_0>0$ such that
\begin{equation}\label{positive}
w>c_0.
\end{equation}

Let $s\in [0,r)$ and $1<p,q<\infty$. In this section, we will show that
$$
w \sL^\sigma\in \mathcal{MR}_q(H_p^{s+2\sigma}(\M,V), H^s_p(\M,V)).
$$
Let $f\in L_q(J, H^s_p(\M))$, where $J=(0,T)$ with $T>0$. 
Consider the Cauchy problem:
\begin{equation}
\label{MR-eq}
\left\{\begin{aligned}
\partial_t u +w \sL^\sigma u&=f ;\\
u(0)&=0 .
\end{aligned}\right.
\end{equation}
Our goal is to prove that \eqref{MR-eq} admits a unique solution
$$
u\in L_q(J, H_p^{s+2\sigma}(\M,V) ) \cap H^1_q(J, H^s_p(\M,V)).
$$
 
Let $R, \widetilde{R}>0$ and let $\widetilde{\omega} :\mathbb{R}\rightarrow [0,1]$ being a smooth non-increasing function that equals $1$ on $[0,1/2]$ and $0$ on $[3/4,\infty)$. Choose a finite open cover $U_{j}$ of $(\M,g)$, where $j\in\{1,\dots,N\}$ when $(\M,g)$ is a closed manifold and $j\in\{0,1,\dots,N\}$ when $(\M,g)=(\R^n,g_n)$, such that the following properties are fulfilled.

We let $U_{j}=B_{j,R}=\B_\M(\p_j,R)$ being geodesic balls with radius $R$ on $\M$ centered at $\p_j\in \M$, $j=1,\dots,N$. Moreover, we define 
\begin{equation}\label{localizerfunt}
w_{j,R}(\p)=\widetilde{\omega}\Big(\frac{d(\p,\p_j)}{2R}\Big)w(\p) + \Big(1-\widetilde{\omega}\Big(\frac{d(\p,\p_j)}{2R}\Big)\Big)w(\p_j), \,\, \p\in\M, \, j=1,\dots ,N.
\end{equation}
In the case of $\M=(\R^n, g_n)$, we assume that $U_{j}=B_{j,R}$, $j=1,\dots,N $, cover the closure of $\B_\M(0, \widetilde{R})$ and we further choose $U_{0}=\R^n\backslash \B_\M(0, \widetilde{R})$. In this case, we also define
\begin{equation}\label{localizerfuntforRm}
w_{0,\widetilde{R}}(\p)=\Big(1-\widetilde{\omega}\Big(\frac{d(\p,0)}{2\widetilde{R}}\Big)\Big)w(\p) + \widetilde{\omega}\Big(\frac{d(\p,0)}{2\widetilde{R}}\Big)w_{\infty}, \quad \p\in\M,
\end{equation}
where $w_{\infty}$ is defined in \eqref{winfinity}.

\begin{lem}\label{Lem: localization est}
Assume that $w$ satisfies \eqref{w-BC}-\eqref{winfinity} and $r\in (0,1]$. For any $\alpha\in [0,r)$ and $\varepsilon>0$, there exists an $R_{0}>0$ such that 
 $$
\|w_{j,R} -w(\p_j)\|_{\alpha,\infty}<\varepsilon \quad \text{for each} \quad j\in\{1,\dots,N\}, \quad \text{whenever}\quad R\in (0,R_0).
$$
In addition, if $\M=(\R^n, g_n)$, then there exists $\widehat{R}>0$ such that 
$$
\|w_{0,\widetilde{R}} -w_{\infty}\|_{\alpha,\infty}<\varepsilon, \quad \text{whenever}\quad \widetilde{R}>\widehat{R}.
$$
\end{lem}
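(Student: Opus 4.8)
The plan is to reduce the whole statement to pointwise H\"older estimates on the products $w_{j,R}-w(\p_j)=\psi_{j,R}\,(w-w(\p_j))$, where $\psi_{j,R}(\p):=\widetilde{\omega}\bigl(\tfrac{d(\p,\p_j)}{2R}\bigr)$. Two elementary observations drive the argument. First, by \eqref{w-BC} the function $w$ is globally $r$-H\"older, $|w(\p)-w(\q)|\le M_0\,d(\p,\q)^{r}$ with $M_0\lesssim\|w\|_{r,\infty}$; and since $\widetilde{\omega}\equiv0$ on $[3/4,\infty)$, the cutoff $\psi_{j,R}$ is supported in the geodesic ball $\{d(\cdot,\p_j)<\tfrac{3R}{2}\}$, on which therefore $|w-w(\p_j)|\le M_0\,(\tfrac{3R}{2})^{r}$. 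Secondly, $\p\mapsto d(\p,\p_j)$ is $1$-Lipschitz, so $\psi_{j,R}$ is Lipschitz with constant $\le\tfrac{\|\widetilde{\omega}'\|_\infty}{2R}$; combining this with $\|\psi_{j,R}\|_\infty\le1$ and the elementary inequality $\min(1,x)\le x^{\alpha}$ gives
$$
|\psi_{j,R}(\p)-\psi_{j,R}(\q)|\le\Bigl(\tfrac{\|\widetilde{\omega}'\|_\infty}{2R}\Bigr)^{\alpha}d(\p,\q)^{\alpha}\qquad\text{for every }\alpha\in[0,1].
$$
Throughout I write $[v]_{\alpha}:=\sup_{\p\neq\q}\tfrac{|v(\p)-v(\q)|_g}{d(\p,\q)^{\alpha}}$, so that $\|v\|_{\alpha,\infty}=\|v\|_\infty+[v]_\alpha$ for $\alpha\in(0,1)$ by the characterization recalled in Section~\ref{Section 2.1}.

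Fix $\alpha\in(0,1)$, the case $\alpha=0$ being the bound $\|w_{j,R}-w(\p_j)\|_\infty\le M_0(\tfrac{3R}{2})^r$ just noted, and set $g:=w_{j,R}-w(\p_j)$. I split the H\"older quotient into two regimes. If $d(\p,\q)\ge R$, then crudely $|g(\p)-g(\q)|\le2\|g\|_\infty\le2M_0(\tfrac{3R}{2})^r$, so dividing by $d(\p,\q)^\alpha\ge R^\alpha$ gives a bound $\lesssim R^{r-\alpha}$. If $d(\p,\q)<R$ and $g$ does not vanish at both points --- say $g(\p)\neq0$, whence $\p,\q\in\{d(\cdot,\p_j)<\tfrac{5R}{2}\}$ --- I use the Leibniz-type identity
$$
g(\p)-g(\q)=\psi_{j,R}(\p)\bigl(w(\p)-w(\q)\bigr)+\bigl(\psi_{j,R}(\p)-\psi_{j,R}(\q)\bigr)\bigl(w(\q)-w(\p_j)\bigr),
$$
bounding the first summand by $M_0\,d(\p,\q)^r\le M_0\,R^{r-\alpha}d(\p,\q)^\alpha$ and the second by $\bigl(\tfrac{\|\widetilde{\omega}'\|_\infty}{2R}\bigr)^{\alpha}d(\p,\q)^{\alpha}\cdot M_0(\tfrac{5R}{2})^r$, which is again $\lesssim R^{r-\alpha}d(\p,\q)^\alpha$. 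Collecting the two regimes yields $\|w_{j,R}-w(\p_j)\|_{\alpha,\infty}\le C R^{r-\alpha}$ with $C=C(\|w\|_{r,\infty},\widetilde{\omega},r,\alpha)$ independent of $j$; since $r>\alpha$, any $R_0\in(0,1)$ with $CR_0^{r-\alpha}<\varepsilon$ works.

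For the Euclidean statement the same scheme applies to $h:=w_{0,\widetilde{R}}-w_\infty=\chi_{\widetilde{R}}\,(w-w_\infty)$, where $\chi_{\widetilde{R}}(\p):=1-\widetilde{\omega}\bigl(\tfrac{d(\p,0)}{2\widetilde{R}}\bigr)$ is supported in $\R^n\setminus\B_\M(0,\widetilde{R})$ and Lipschitz with constant $\le\tfrac{\|\widetilde{\omega}'\|_\infty}{2\widetilde{R}}$. The one new feature is that the amplitude is now controlled by the tail hypothesis \eqref{winfinity} rather than by a power of the scale: put $\delta(\rho):=\|w-w_\infty\|_{L_\infty(\R^n\setminus\B_\M(0,\rho))}\to0$ as $\rho\to\infty$. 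In the decomposition $h(\p)-h(\q)=\chi_{\widetilde{R}}(\p)(w(\p)-w(\q))+(\chi_{\widetilde{R}}(\p)-\chi_{\widetilde{R}}(\q))(w(\q)-w_\infty)$, whenever $d(\p,\q)<1$ and a summand is nonzero the relevant arguments lie in $\R^n\setminus\B_\M(0,\widetilde{R}-1)$, so that $|w(\q)-w_\infty|\le\delta(\widetilde{R}-1)$; for the first factor one interpolates $|w(\p)-w(\q)|\le2\delta(\widetilde{R}-1)$ against $|w(\p)-w(\q)|\le M_0\,d(\p,\q)^r$ with exponent $\tfrac{\alpha}{r}\in(0,1)$ to get $\lesssim\delta(\widetilde{R}-1)^{1-\alpha/r}\,d(\p,\q)^\alpha$. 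The regime $d(\p,\q)\ge1$ is absorbed by $|h(\p)-h(\q)|\le2\|h\|_\infty\le2\delta(\widetilde{R})$. Altogether $\|w_{0,\widetilde{R}}-w_\infty\|_{\alpha,\infty}\to0$ as $\widetilde{R}\to\infty$.

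The argument is essentially bookkeeping once the regimes $d(\p,\q)\ge R$ and $d(\p,\q)<R$ (resp.\ $\ge1$ and $<1$) are separated; the point deserving genuine care is that the H\"older seminorm of the cutoff blows up like $R^{-\alpha}$, so one must use $\alpha<r$ to absorb it into the $R^{r}$ decay of $w-w(\p_j)$ --- and in the Euclidean case, that \eqref{winfinity} provides only $L_\infty$-smallness in the tail, which must be upgraded by interpolation against the global H\"older bound on $w$ to yield honest $C^{\alpha}$-smallness of the first Leibniz term.
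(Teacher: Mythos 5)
Your proof is correct and follows the same strategy as the paper's: a Leibniz-type product decomposition of the localization error, balancing the cutoff's $\alpha$-H\"older seminorm (which scales like $R^{-\alpha}$, resp.\ $\widetilde{R}^{-\alpha}$) against the $O(R^{r})$ (resp.\ $o(1)$) smallness of the amplitude supplied by \eqref{w-BC} (resp.\ \eqref{winfinity}). The one noticeable variation is in the Euclidean tail term involving $w(\p)-w(\q)$: the paper introduces an auxiliary small scale $\varepsilon_0$ and then sends $\widetilde{R}\to\infty$, whereas you interpolate the $L_\infty$-tail bound against the global $r$-H\"older bound on $w$ to obtain $C^{\alpha}$-smallness in one stroke --- either works, and the argument is otherwise identical in substance.
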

\begin{proof}
By the given condition, we immediately have
$$
\|w_{j,R} -w(\p_j)\|_\infty \to 0 	\quad \text{for each} \quad j\in\{1,\dots,N\}
$$
as $R\to 0^+$. Let
$$
f_{j,R}(\p)=w_{j,R}(\p) -w(\p_j)=\widetilde{\omega}\Big(\frac{d(\p,\p_j)}{2R}\Big) \Big(w(\p)-w(\p_j) \Big).
$$
Then for $\p,\q \in \B_\M(\p_j,2R)$ with $\p\neq \q$, it follows from \eqref{w-BC} that
\begin{eqnarray*}
\lefteqn{\frac{|f_{j,R}(\p)- f_{j,R}(\q)|}{d(\p,\q)^\alpha}} \\
&\leq & \frac{\Big| \widetilde{\omega}\Big(\frac{d(\p,\p_j)}{2R}\Big) - \widetilde{\omega}\Big(\frac{d(\q,\p_j)}{2R}\Big) \Big|}{d(\p,\q)^\alpha} | w(\p)-w(\p_j)| + \Big|\widetilde{\omega}\Big(\frac{d(\q,\p_j)}{2R}\Big) \Big| \frac{|w(\p)- w(\q)|}{d(\p,\q)^\alpha} \\
&\leq & \frac{\Big| \widetilde{\omega}\Big(\frac{d(\p,\p_j)}{2R}\Big) - \widetilde{\omega}\Big(\frac{d(\q,\p_j)}{2R}\Big) \Big|}{ \Big|\frac{d(\p,\p_j)}{2R} -\frac{d(\q,\p_j)}{2R} \Big|^\alpha}\frac{\Big|\frac{d(\p,\p_j)}{2R} -\frac{d(\q,\p_j)}{2R} \Big|^\alpha}{d(\p,\q)^\alpha} | w(\p)-w(\p_j)| + C R^{r-\alpha} \\
&\leq & C R^{r-\alpha},
\end{eqnarray*}
for certain $C>0$. If $\M=(\R^n, g_n)$, then let
$$
g_{0,\widetilde{R}}(\p)=w_{0,\widetilde{R}}(\p)-w_{\infty}=\Big(1-\widetilde{\omega}\Big(\frac{d(\p,0)}{2\widetilde{R}}\Big)\Big) (w(\p)-w_{\infty}).
$$
For any $\p,\q \in \R^n\backslash \B_\M(0,\widetilde{R})$ with $\p\neq \q$, we have
\begin{eqnarray}\nonumber
\frac{|g_{0,\widetilde{R}}(\p)- g_{0,\widetilde{R}}(\q)|}{d(\p,\q)^\alpha} \hspace{-2mm}
&\leq& \hspace{-2mm} |w(\q)-w_{\infty}|\frac{\Big|\widetilde{\omega}\Big(\frac{d(\p,0)}{2\widetilde{R}}\Big)-\widetilde{\omega}\Big(\frac{d(\q,0)}{2\widetilde{R}}\Big)\Big|}{ \Big|\frac{d(\p,0)}{2\widetilde{R}} -\frac{d(\q,0)}{2\widetilde{R}} \Big|^\alpha}\frac{ \Big|\frac{d(\p,0)}{2\widetilde{R}} -\frac{d(\q,0)}{2\widetilde{R}} \Big|^\alpha}{d(\p,\q)^{\alpha}}\\\label{rhsft}
&&+\Big|1-\widetilde{\omega}\Big(\frac{d(\p,0)}{2\widetilde{R}}\Big)\Big|\frac{|w(\p)-w(\q)|}{d(\p,\q)^{\alpha}}.
\end{eqnarray}
Due to \eqref{winfinity}, the first term on the right hand side of \eqref{rhsft} is $\leq C \widetilde{R}^{-\alpha}$. For the second term on the right hand side of \eqref{rhsft}, if $d(\p,\q)\leq\varepsilon_{0}$, for some $\varepsilon_{0}>0$, then it is $\leq C \varepsilon_{0}^{r-\alpha}<\varepsilon/2$, by taking $\varepsilon_{0}$ small enough. If $d(\p,\q)\geq\varepsilon_{0}$, then 
$$
\frac{|g_{0,\widetilde{R}}(\p)- g_{0,\widetilde{R}}(\q)|}{d(\p,\q)^\alpha} \leq C\varepsilon_{0}^{-\alpha}(|w(\p)-w_{\infty}|+|w_{\infty}-w(\q)|)\leq\varepsilon/2,
$$
by choosing $\widetilde{R}$ sufficiently large due to \eqref{winfinity}. The result follows by writing
$$
\M\times \M=\{(\p,\q)\in \R^n\times \R^n\, |\, d(\p,\q)\leq\varepsilon_{0}\}\cup \{(\p,\q)\in \R^n\times \R^n\, |\, d(\p,\q)\geq\varepsilon_{0}\}.
$$
\end{proof}

\begin{lem}\label{Lem: commutator smooth fn}
If $\phi\in BC^\infty(\M)$, then for any $s\geq 0$ and $c>0$ 
$$
[\phi,(c+\sL)^{\sigma}]\in \L(H^{s+2\sigma}_p(\M,V),H^{s+1-\varepsilon}_p(\M,V))
$$ 
for any $\varepsilon>0$. Moreover
$$
\|[\phi,(c+\sL)^{\sigma}]\|_{\L(H^{s+2\sigma}_p(\M,V),H^{s+1-\varepsilon}_p(\M,V))} \leq M=M(c_0),\quad c>c_0,
$$
for any fixed $c_0>0$.
\end{lem}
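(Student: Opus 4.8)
The plan is to start from Balakrishnan's representation $\eqref{S4: fractional Delta def}$ of $(c+\sL)^{\sigma}$, push $\phi$ under the integral sign, and estimate the resulting integrand on the Bessel scale. Set $B_x:=x+c+\sL$. Since $x+c$ commutes with $\phi$ we have $[B_x,\phi]=[\sL,\phi]$, and hence the two elementary identities $(c+\sL)B_x^{-1}=I-xB_x^{-1}$ and $[\phi,B_x^{-1}]=B_x^{-1}[\sL,\phi]B_x^{-1}=-B_x^{-1}[\phi,\sL]B_x^{-1}$. For $u\in H^{s+2}_p(\M,V)=\dom(\sL)$ the integral $\eqref{S4: fractional Delta def}$ converges absolutely in $H^s_p(\M,V)$, and the same is true with $u$ replaced by $\phi u$, which again lies in $H^{s+2}_p(\M,V)$ by Proposition~\ref{embBCH}(ii); subtracting and using the two identities above gives, for every $u\in H^{s+2}_p(\M,V)$,
\begin{equation*}
[\phi,(c+\sL)^{\sigma}]u=\frac{\sin(\pi\sigma)}{\pi}\int_0^\infty x^{\sigma}\,B_x^{-1}[\phi,\sL]B_x^{-1}u\,dx .
\end{equation*}
All the algebra here is legitimate on $H^{s+2}_p$ because $B_x^{-1}$ maps $H^t_p$ into $H^{t+2}_p$, consistently on all scales by $\eqref{resolvreestr}$, multiplication by $\phi$ is bounded on every $H^t_p$ (Proposition~\ref{embBCH}(ii)), and $[\phi,\sL]=\phi\sL-\sL\phi$ is a \emph{first order} differential operator with $BC^\infty$ coefficients (its second order part cancels), so $[\phi,\sL]\in\L(H^{t+1}_p(\M,V),H^{t}_p(\M,V))$ for every $t\geq0$ with norm controlled by $\phi$ and $\a$ only.

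The heart of the proof is a quantitative estimate of the integrand, using two resolvent bounds uniform in $x>0$ and $c\geq c_0$. Sectoriality of $\sL_t$ (Theorem~\ref{Thm: sectoriality Lap}) at the positive point $x+c$ gives $\|B_x^{-1}\|_{\L(H^t_p(\M,V))}\leq K/(x+c)$, while $(c+\sL)B_x^{-1}=I-xB_x^{-1}$ together with $\dom(\sL_t)=H^{t+2}_p$ and invertibility of $c+\sL$ gives $\|B_x^{-1}\|_{\L(H^t_p(\M,V),H^{t+2}_p(\M,V))}\leq C_*(c_0)$. Interpolating these with Proposition~\ref{Prop: interpolation} yields, for every $\theta\in[0,1]$, the bound $\|B_x^{-1}\|_{\L(H^t_p(\M,V),H^{t+2\theta}_p(\M,V))}\leq C(c_0)(x+c)^{\theta-1}$. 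Then I would fix $\theta_1:=\max\{0,(1-s-2\sigma)/2\}$, so that $s+2\sigma+2\theta_1\geq1$ and $[\phi,\sL]$ sends $H^{s+2\sigma+2\theta_1}_p$ into the non-negative order space $H^{s+2\sigma+2\theta_1-1}_p$, and choose $\theta_2\in[0,1)$ with $\theta_1+\theta_2\in[1-\sigma-\varepsilon/2,\,1-\sigma)$, which is possible because $\theta_1<1-\sigma$. Chaining $B_x^{-1}$, then $[\phi,\sL]$, then $B_x^{-1}$, and using the embedding $H^{s+2\sigma+2(\theta_1+\theta_2)-1}_p\hookrightarrow H^{s+1-\varepsilon}_p$ (Proposition~\ref{embBCH}(i); note $s+2\sigma+2(\theta_1+\theta_2)-1\geq s+1-\varepsilon$), one obtains
\begin{equation*}
\big\|B_x^{-1}[\phi,\sL]B_x^{-1}u\big\|_{H^{s+1-\varepsilon}_p(\M,V)}\leq C(c_0,\phi)\,(x+c)^{\theta_1+\theta_2-2}\,\|u\|_{H^{s+2\sigma}_p(\M,V)} .
\end{equation*}

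Multiplying by $x^{\sigma}$ and integrating, $\int_0^\infty x^{\sigma}(x+c)^{\theta_1+\theta_2-2}\,dx$ is finite precisely because $\sigma>-1$ (integrability at $0$) and $\theta_1+\theta_2<1-\sigma$ (integrability at $\infty$); it equals $c^{\sigma+\theta_1+\theta_2-1}$ times a finite Euler integral, and since $\sigma+\theta_1+\theta_2-1<0$ this is bounded by its value at $c_0$ for all $c\geq c_0$. Hence the integral in the displayed formula converges absolutely in $H^{s+1-\varepsilon}_p(\M,V)$ and is bounded there by $M(c_0)\|u\|_{H^{s+2\sigma}_p(\M,V)}$ for $u\in H^{s+2}_p(\M,V)$ and $c>c_0$. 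To remove the restriction $u\in H^{s+2}_p$, note that $(c+\sL)^{\sigma}$ is closed with domain $H^{s+2\sigma}_p(\M,V)$ and $\phi\cdot$ is bounded on both $H^{s+2\sigma}_p$ and $H^s_p$, so $[\phi,(c+\sL)^{\sigma}]=\phi(c+\sL)^{\sigma}-(c+\sL)^{\sigma}\phi$ is bounded from $H^{s+2\sigma}_p(\M,V)$ to $H^s_p(\M,V)$; on the other hand $u\mapsto\frac{\sin(\pi\sigma)}{\pi}\int_0^\infty x^{\sigma}B_x^{-1}[\phi,\sL]B_x^{-1}u\,dx$ is, by the estimate above, bounded from $H^{s+2\sigma}_p(\M,V)$ to $H^{s+1-\varepsilon}_p(\M,V)$, and we may assume $\varepsilon\in(0,1)$ (larger $\varepsilon$ being weaker) so that $H^{s+1-\varepsilon}_p\hookrightarrow H^s_p$. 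These two bounded maps into $H^s_p(\M,V)$ coincide on the dense subspace $H^{s+2}_p(\M,V)$, hence everywhere, which gives the asserted mapping property and the uniform bound $M=M(c_0)$.

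The main obstacle is the estimate in the second paragraph: the commutator should gain essentially one derivative over the order-$2\sigma$ operator $(c+\sL)^{\sigma}$, and this is sharp, so one has to balance the combined smoothing $2(\theta_1+\theta_2)$ produced by the two resolvents against the integrability at $\infty$ of $x^{\sigma}(x+c)^{\theta_1+\theta_2-2}$; the constraint $\theta_1+\theta_2<1-\sigma$ is exactly why one lands in $H^{s+1-\varepsilon}_p$ rather than $H^{s+1}_p$. A secondary point requiring care is keeping all intermediate Bessel indices non-negative (which dictates the lower bound on $\theta_1$) and tracking the $c_0$-dependence through the interpolation and the Euler integral.
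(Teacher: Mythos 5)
Your proof is correct and follows the same route as the paper: start from the Balakrishnan representation \eqref{S4: fractional Delta def}, use $[\phi,B_x^{-1}]=-B_x^{-1}[\phi,\sL]B_x^{-1}$ to arrive at the identity
$[\phi,(c+\sL)^{\sigma}]u=\frac{\sin(\pi\sigma)}{\pi}\int_0^\infty x^{\sigma}B_x^{-1}[\phi,\sL]B_x^{-1}u\,dx$,
exploit that $[\phi,\sL]$ is a first-order operator, and then estimate the integrand on the Bessel scale. Where you differ from the paper is solely in the quantitative estimate of the integrand. The paper inserts fractional powers $A^{\pm\alpha}$ of $A=c+\sL$ inside the integrand and invokes the resolvent bound of \cite[Lemma~2.3.3]{Tan}, splitting into the cases $x>1$ and $x\leq1$ (shifting to $B=\tfrac{c}{2}+\sL$ in the second case to keep the bound uniform in $c\geq c_0$). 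You instead interpolate the two elementary resolvent estimates $\|B_x^{-1}\|_{\L(H^t_p)}\leq K/(x+c)$ and $\|B_x^{-1}\|_{\L(H^t_p,H^{t+2}_p)}\leq C_*(c_0)$ to obtain the scale of bounds $\|B_x^{-1}\|_{\L(H^t_p,H^{t+2\theta}_p)}\leq C(c_0)(x+c)^{\theta-1}$, which treats all $x>0$ and all $c\geq c_0$ at once. Both devices are moment inequalities for the sectorial resolvent and give the same $\varepsilon$-loss, so the two estimates are morally identical; your version is more self-contained, avoids the case split, and makes explicit two points the paper leaves implicit, namely the lower bound $\theta_1$ ensuring the intermediate Bessel index $s+2\sigma+2\theta_1-1$ is non-negative when $s+2\sigma<1$, and the density argument extending the commutator identity from $u\in H^{s+2}_p$ to all of $H^{s+2\sigma}_p$.
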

\begin{proof}
Let $A =c+\sL.$ Then, for any $u\in H^{s+2\sigma}_p(\M,V)$, \eqref{S4: fractional Delta def} implies
\begin{eqnarray*}\nonumber
\lefteqn{(\phi A^{\sigma}-A^{\sigma}\phi)u}\\\nonumber
&=&\frac{\sin(\pi \sigma)}{\pi}\int_{0}^{+\infty}x^{\sigma-1}\Big\{ [ \phi, A](A+x)^{-1}+A [\phi,(A+x)^{-1}] \Big\} u\, dx\\\nonumber
&=& -\frac{\sin(\pi \sigma)}{\pi}\int_{0}^{+\infty}x^\sigma (A +x)^{-1}[A,\phi ] (A +x)^{-1} u \, dx.
\end{eqnarray*}
Note that $[A,\phi ]$ is a first order differential operator.
When $x>1$, \cite[Lemma 2.3.3]{Tan} implies
\begin{eqnarray*}
\lefteqn{\| x^\sigma (A +x)^{-1}[A,\phi ] (A +x)^{-1} u \|_{s+1-\varepsilon,p} }\\
&\leq & x^{\sigma-1} \|[A,\phi ] A^{-\frac{1}{2}-\frac{\varepsilon}{4}} A^{1-\sigma-\frac{\varepsilon}{4}}(A +x)^{-1} A^{ \sigma +\frac{\varepsilon}{2} -\frac{1}{2}} u \|_{s+1-\varepsilon,p}\\
&\leq & C x^{-1-\frac{\varepsilon}{4}} \|u \|_{s+2\sigma,p}
\end{eqnarray*}
for $\varepsilon>0$ sufficiently small;
and when $x\leq 1$, letting $B=\frac{c}{2}+\sL$, we have
\begin{eqnarray*}
\lefteqn{\| x^\sigma (A +x)^{-1}[A,\phi ] (A +x)^{-1} u \|_{s+1-\varepsilon,p}} \\
&\leq & x^{\sigma-1} \|[A,\phi ] B^{-\frac{1}{2}-\frac{\varepsilon}{4}} B^{1-\sigma-\frac{\varepsilon}{4}}(A +x)^{-1} B^{ \sigma +\frac{\varepsilon}{2} -\frac{1}{2}} u \|_{s+1-\varepsilon,p}\\
&\leq & C \left(\frac{c}{2}+x \right)^{-1-\frac{\varepsilon}{4}} \|u \|_{s+2\sigma,p}.
\end{eqnarray*} 
These two estimates establish the assertion.
\end{proof}

\begin{prop}\label{Prop: strong reg s<1}
Suppose that $w$ satisfies \eqref{w-BC}-\eqref{positive} with $r\in (0,1]$ and let $f\in L_q(J,H^s_p(\M,V))$ for some $s\in [0,r)$. Then there exists a unique
$$
u\in L_q(J, H^{s+2\sigma}_p(\M,V)) \cap H^1_q(J, H^s_p(\M,V)
$$
solving \eqref{MR-eq}.
\end{prop}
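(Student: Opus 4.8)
The plan is to run a freezing-of-coefficients argument at the level of the resolvent, using the commutator estimate of Lemma~\ref{Lem: commutator smooth fn} to absorb the error terms that the spatial nonlocality of $\sL^\sigma$ would otherwise make intractable. Since $H^s_p(\M,V)$ is a UMD space, by Theorem~\ref{KaWeTh} it suffices to produce constants $c,\mu>0$ such that the $H^s_p(\M,V)$-realization of $\cB:=\mu+w(c+\sL)^\sigma$, with domain $H^{s+2\sigma}_p(\M,V)$, is $R$-sectorial of some angle $>\pi/2$; then $\cB\in\mathcal{MR}_q(H^{s+2\sigma}_p(\M,V),H^s_p(\M,V))$, and the solvability of \eqref{MR-eq} on the finite interval $J$, together with uniqueness, follows. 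This reduction is legitimate: by the estimates behind Section~\ref{Section 4} the operator $w[(c+\sL)^\sigma-\sL^\sigma]$ maps $H^{s+2\sigma}_p(\M,V)$ boundedly into $H^s_p(\M,V)$ and is of order strictly below $2\sigma$, hence a lower order perturbation with arbitrarily small relative bound; and replacing $\sL^\sigma$ by $\mu+\sL^\sigma$ amounts to the substitution $u\mapsto e^{-\mu t}u$ in \eqref{MR-eq}, which does not affect solvability on $J$.

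First I would treat the frozen operators. With the cover $\{U_j\}$ and the functions $w_{j,R}$ of \eqref{localizerfunt} --- and, when $\M=(\R^n,g_n)$, the exterior piece $w_{0,\widetilde R}$ of \eqref{localizerfuntforRm} with frozen value $w_\infty$ --- put $\cB_j:=\mu+w_{j,R}(c+\sL)^\sigma$. The positive scalar $w(\p_j)\in[c_0,\|w\|_\infty]$ times $(c+\sL)^\sigma$ is $R$-sectorial of some angle $>\pi/2$ for $c$ large, by the $R$-sectoriality result of Section~\ref{Section 4} together with the lower order comparison of $(c+\sL)^\sigma$ and $\sL^\sigma$. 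Fixing $\alpha\in[s,r)$ and then $R$ small by Lemma~\ref{Lem: localization est}, the difference $\cB_j-\mu-w(\p_j)(c+\sL)^\sigma=(w_{j,R}-w(\p_j))(c+\sL)^\sigma$ has arbitrarily small norm from $H^{s+2\sigma}_p(\M,V)$ to $H^s_p(\M,V)$, since multiplication by a function of small $BC^\alpha(\M)$-norm has small norm on $H^s_p(\M,V)$ by Proposition~\ref{embBCH}(ii) (this is where $\alpha\ge s$ enters). By the perturbation theorem \cite[Theorem~1]{KuWe}, each $\cB_j$ is then $R$-sectorial of angle $>\pi/2$ with domain $H^{s+2\sigma}_p(\M,V)$; enlarging $\mu$ makes it invertibly sectorial.

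With $R$ now fixed, I would patch the local resolvents. Choose a $BC^\infty(\M)$ partition of unity $\{\varphi_j\}$ subordinate to $\{U_j\}$ and cutoffs $\{\chi_j\}\subset BC^\infty(\M)$ with $\chi_j\equiv1$ on $\supp\varphi_j$, $\supp\chi_j\subset U_j$ and $\chi_jw=\chi_jw_{j,R}$, and for $\lambda\in\Sigma_\theta$, $\theta>\pi/2$, set $P_\lambda:=\sum_j\chi_j(\lambda+\cB_j)^{-1}\varphi_j$. Using $\chi_jw=\chi_jw_{j,R}$, $\chi_j\varphi_j=\varphi_j$ and $\sum_j\varphi_j=1$, one computes $(\lambda+\cB)P_\lambda=\id+K_\lambda$ with $K_\lambda=\sum_jw\,[(c+\sL)^\sigma,\chi_j](\lambda+\cB_j)^{-1}\varphi_j$, and symmetrically $P_\lambda(\lambda+\cB)=\id+\widetilde K_\lambda$ of the same form. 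The crucial point --- this is where the nonlocality is actually dealt with --- is that $K_\lambda$ decays in $|\lambda|$. Indeed, by Lemma~\ref{Lem: commutator smooth fn} the commutator $[(c+\sL)^\sigma,\chi_j]$ maps $H^{s'+2\sigma}_p(\M,V)$ into the strictly intermediate space $H^{s'+1-\varepsilon}_p(\M,V)$ for all $s'\ge0$ and $\varepsilon\in(0,1)$; taking $\varepsilon$ close to $1$ (so that, since $s<r$, the target is still multiplied by $w\in BC^r(\M)$, by Proposition~\ref{embBCH}(ii)) and inserting a fractional power $(c+\sL)^\gamma$ with $\gamma<\sigma$, one factors $[(c+\sL)^\sigma,\chi_j](\lambda+\cB_j)^{-1}=\bigl([(c+\sL)^\sigma,\chi_j](c+\sL)^{-\gamma}\bigr)\bigl((c+\sL)^\gamma(\lambda+\cB_j)^{-1}\bigr)$, where the first factor is bounded on $H^s_p(\M,V)$ and the second has norm $\le C(1+|\lambda|)^{\gamma/\sigma-1}$ by the moment inequality for $(c+\sL)^\sigma$ relative to $\cB_j$; this yields $\|K_\lambda\|_{\L(H^s_p)}\le C(1+|\lambda|)^{-\kappa}$ for some $\kappa>0$ when $s>0$, and the same bound for $K_\lambda^2$ when $s=0$ (one application of the parametrix error raises the effective base regularity above $0$ and restores the gain). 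Carrying these estimates out with $R$-bounds in place of operator norms --- all operations involved, finite sums, composition with fixed operators, and the moment inequality for $R$-sectorial operators, preserve $R$-boundedness --- and then enlarging $\mu$, which pushes $\Sigma_\theta+\mu$ into the region where $|\lambda|$ is large, renders $\id+K_\lambda$ and $\id+\widetilde K_\lambda$ uniformly invertible with $R$-bounded inverses. Hence $\lambda+\cB$ is invertible, $(\lambda+\cB)^{-1}=P_\lambda(\id+K_\lambda)^{-1}$, and $\{\lambda(\lambda+\cB)^{-1}\}_{\lambda\in\Sigma_\theta}$ is $R$-bounded, i.e., $\cB\in\mathcal{R}(\theta)$; by Theorem~\ref{KaWeTh} and the reduction above this gives the claimed existence and uniqueness.

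I expect the main obstacle to be exactly this decay estimate for $K_\lambda$: one must apply $[(c+\sL)^\sigma,\chi_j]$ at a Sobolev level where it genuinely gains regularity yet leaves the resolvent enough room to contribute a factor decaying in $|\lambda|$ --- which couples the order $2\sigma-1$ of the commutator, the order $2\sigma$ of the operator, and the base regularity $s$, and is the reason the case $s=0$ requires the extra iteration --- while keeping the low-regularity multiplier $w\in BC^r(\M)$ acting on the space that results; this compatibility is precisely what forces the hypotheses $s<r$ and $r\le1$. The passage from operator-norm estimates to $R$-bounds, and the handling of the exterior chart when $\M=(\R^n,g_n)$ (where Lemma~\ref{Lem: localization est} furnishes the analogous smallness for $w_{0,\widetilde R}-w_\infty$), are routine by comparison.
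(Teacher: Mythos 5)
Your proposal follows essentially the same route as the paper: reduce to proving $R$-sectoriality of $\mu+w(c+\sL)^\sigma$ and then invoke Theorem~\ref{KaWeTh}, obtain $R$-sectoriality of the frozen operators $w_{j,R}(c+\sL)^\sigma$ from the result of Section~\ref{Section 4} together with the smallness estimate of Lemma~\ref{Lem: localization est} and the Kunstmann--Weis perturbation theorem, and then patch local resolvents by a parametrix whose error term involves the commutators $[(c+\sL)^\sigma,\chi_j]$, which Lemma~\ref{Lem: commutator smooth fn} shows are of lower order and thus allow a Neumann series. The paper constructs the analogous objects $R(\lambda)$ and $Q(\lambda)$ (with the commutator carried by the inner cutoffs $\phi_j$ rather than the outer $\chi_j$, and establishing a left inverse $L(\lambda)=\sum_k Q^k(\lambda)R(\lambda)$ first before verifying it is also a right inverse), and it defers the quantitative decay estimates to the earlier reference \cite{RoidosShao}; your argument is the same modulo this bookkeeping, and your remarks about the $\delta\le s$ constraint that makes $s=0$ need an extra iteration of the error, and about the role of $r\le1$ versus the commutator order $2\sigma-1+\varepsilon$, correctly identify the points that in the paper are handled implicitly through the citation.
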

\begin{proof}
The result follows by similar steps as in the proof of \cite[Theorem~6.2]{RoidosShao}, where we have to take Lemma~\ref{Lem: commutator smooth fn} into account. More precisely, if $w_{j,R}$, $j\in\{1,\dots,N\}$, $R>0$, are as in \eqref{localizerfunt}-\eqref{localizerfuntforRm}, let 
\begin{equation}\label{eqpertr}
w_{j,R}A^{\sigma}=w(\p_j)A^{\sigma}+ (w_{j,R}-w(\p_j))A^{\sigma}:H^{s+2\sigma}_p(\M,V) \rightarrow H^{s}_p(\M,V),
\end{equation}
where $A =c_{0}+\sL$, $c_{0}>0$; and in the case of $\M=(\R^n, g_n)$, we define
\begin{equation}\label{eqpertr2}
w_{0,\widetilde{R}}A^{\sigma}=w_{\infty}A^{\sigma}+ (w_{0,\widetilde{R}}-w_{\infty})A^{\sigma}:H^{s+2\sigma}_p(\M,V) \rightarrow H^{s}_p(\M,V).
\end{equation}
Note that, by Proposition~\ref{embBCH}~(ii), elements in $BC^r(\M)$ act by multiplication as bounded maps on $H^s_p(\M,V)$. Therefore, for every $\theta\in(\pi/2,\pi)$ and every $c>0$, by Lemma~\ref{Lem: localization est}, \eqref{eqpertr} and \cite[Theorem 1]{KuWe}, after choosing $R$ sufficiently small and $\widetilde{R},N$ large enough, both operators \eqref{eqpertr}-\eqref{eqpertr2} belong to $\mathcal{R}(\theta)$. As a consequence, due to standard sectoriality of $w_{j,R}A^{\sigma}$, $ j\in\{1,\dots,N\}$, $w_{0,\widetilde{R}}A^{\sigma}$ and $A^{\sigma}$, \cite[(6.45)]{RoidosShao} holds true. 

Set $J=\{1,\dots,N\}$ when $(\M,g)$ is a closed manifold or $J=\{0,1,\dots,N\}$ when $(\M,g)=(\R^n,g_n)$.
Moreover, we put $R_j=R$ when $j\in \{1,\dots,N\}$ and $R_0=\widetilde{R}$. Choose $\phi_{j}\in BC^{\infty}(\M)$, $j\in J$, to be a partition of unity subordinated to the cover $\{U_{j}\}_{j \in J}$. Moreover, let $\psi_{j}\in BC^{\infty}(\M)$, $j\in J$, supported on $U_{j}$, taking values on $[0,1]$ and satisfying $\psi_{j}\equiv 1$ on the support of $\phi_{j}$. Then by Lemma \ref{Lem: commutator smooth fn}, similarly to \cite[(6.47)]{RoidosShao}, for sufficiently large $c$, we can construct a left inverse $L(\lambda)$ of $wA^{\sigma}+c+\lambda$, $\lambda\in \Sigma_{\theta}$, that belongs to the space $\mathcal{L}(H^{s}_p(\M,V),H^{s+2\sigma}_p(\M,V))$. More precisely, we have that 
$$
L(\lambda)=\sum_{k=0}^{\infty}Q^{k}(\lambda)R(\lambda), \quad \lambda\in \Sigma_{\theta},
$$
where 
$$
Q(\lambda)=\sum_{j\in J} \psi_{j}(w_{j,R_j}A^{\sigma}+c+\lambda)^{-1}w_{j,R_j}[A^{\sigma},\phi_{j}]
$$
and
$$ 
R(\lambda)=\sum_{j\in J} \psi_{j}(w_{j,R_j}A^{\sigma}+c+\lambda)^{-1}\phi_{j}.
$$
Furthermore, similarly to \cite[(6.48)]{RoidosShao} we can show that $L(\lambda)$ is also a right inverse of $wA^{\sigma}+c+\lambda$, $\lambda\in \Sigma_{\theta}$. After having the above expression of the resolvent of $wA^{\sigma}+c$, we can show $R$-sectoriality of angle $\theta$ for this operator as in the proof of \cite[Theorem~6.2]{RoidosShao}, i.e. similarly to \cite[(6.49)]{RoidosShao} and the estimates below. Next, $R$-sectoriality for $w \sL^{\sigma}+c$ for large $c$ is obtained by the Step 2 of the proof of \cite[Theorem~6.2]{RoidosShao}. Then the result follows by Theorem \ref{KaWeTh}.
\end{proof}

\begin{remark}
The proof of Proposition \ref{Prop: strong reg s<1} is based on the generalization of freezing-of-coefficients method to the case of non-local operator of certain type. Such an extension was first demonstrated in the proof of \cite[Theorem~6.2]{RoidosShao}. One of the main ingredients of the proof is the observation that the commutator of the fractional powers of the Laplacian and a function in the class $BC^\infty(\M)$ is indeed of lower order in a sectoriality sense, see Lemma \ref{Lem: commutator smooth fn}. Moreover, instead of using an $\varepsilon-C_{\varepsilon}$ argument as in the classical case (i.e. the case of differential operators in $L^{p}$-spaces, see, e.g. the proof of \cite[Theorem 5.7]{DenHiePru03}), similarly to the proof of \cite[Theorem~6.2]{RoidosShao}, we proceed by using the decay properties of the resolvent of a sectorial operator, i.e. \cite[Lemma 2.3.3]{Tan}, in order to construct a left and right inverse for $wA^{\sigma}+c+\lambda$, $\lambda\in \Sigma_{\theta}$. 
\end{remark}

With a little abuse of notation, we denote
$$ 
F:=[\nabla, \sL_s ]=: \nabla \sL^\eta_{s,\tau} - \sL^\eta_{s-1,\tau+1} \nabla,
$$ 
which is a second order differential operator. 
Then, we have 
\begin{align*}
[\nabla, (\lambda+\sL_s )^{-1}] = - (\lambda+\sL^\eta_{s,\tau+1} )^{-1} F (\lambda+\sL^\eta_{s,\tau} )^{-1} 
\end{align*}
for all $\lambda\in \Sigma_\theta$, where $\theta> \pi/2$ is the sectorial angle of $\sL_s$ asserted in Theorem~\ref{Thm: sectoriality Lap}. Given any $\delta>0$ and $u\in H^{s+2\sigma + \delta}_p(\M,V)$,
since
\begin{eqnarray*}
\lefteqn{\nabla \sL (\lambda +\sL )^{-1}u \,\,\, = \,\,\, \nabla u - \lambda\nabla (\lambda +\sL )^{-1}u} \\
&=& \nabla u - \lambda (\lambda +\sL )^{-1} \nabla u - \lambda [\nabla, (\lambda +\sL_s )^{-1}] u \\
&=& \sL (\lambda +\sL )^{-1} \nabla u - \lambda (\lambda+\sL^\eta_{s,\tau+1} )^{-1} F (\lambda+\sL^\eta_{s,\tau} )^{-1} u,
\end{eqnarray*}
we have
\begin{eqnarray*}
\lefteqn{ \nabla \sL^\sigma u 
\,\,\, = \,\,\, \frac{\sin(\pi\sigma)}{\pi} \int_0^\infty x^{\sigma-1} \nabla \sL (x +\sL )^{-1}u \, dx} \\
&=& \sL^\sigma \nabla u 
 -\frac{\sin(\pi\sigma)}{\pi} \underbrace{\int_0^\infty x^\sigma (x+\sL^\eta_{s,\tau+1} )^{-1} F (x+\sL^\eta_{s,\tau} )^{-1} u\, dx}_{(\ast)} .
\end{eqnarray*}
 
To estimate ($\ast$), we first note that for $x>1$ 
\begin{eqnarray*}
\lefteqn{\| (x+\sL^\eta_{s,\tau+1} )^{-1} F (x+ \sL^\eta_{s,\tau} )^{-1} u\|_{s,p}} \\
&\leq & \frac{M}{x} \| F (x+\sL )^{-1} u\|_{s,p} \\
&\leq & \frac{M}{x} \| (x+\sL )^{-1} u\|_{s+2,p} \\
&\leq & \frac{M}{x} \Big[ \|\sL (x+\sL )^{-1} u\|_{s,p} + \|(x+\sL )^{-1} u\|_{s,p} \Big] \\
&\leq & \frac{M}{x} \Big[\frac{C}{x^{\sigma+\varepsilon}}\|u\|_{s+2\sigma+2\varepsilon,p} + \frac{C}{x}\|u\|_{s,p} \Big]
\end{eqnarray*}
for $\varepsilon>0$ sufficiently small. 
The last step follows from \cite[Lemma~2.3.3]{Tan}.	
When $x\leq 1$, we will use the following lemma.
\begin{lem}
\label{Appendix: intermediate-resolvent}
Let $X_1 \xhookrightarrow{d} X_0$ be a pair of Banach spaces, where $X_j$ is equipped with norm $\|\cdot\|_j$. 
Suppose that $\vartheta\in (\pi/2,\pi)$, and $A\in \S(\vartheta)$ with domain $\dom(A) =X_1$. 
Let $(\alpha,p),(\beta,p)\in \{(0,1)\times[1,\infty]\}\cup \{(1,\infty)\}$ with $\beta\geq \alpha$.
There exists $C=C(p,\alpha,\beta)>0$ such that for all $t\in (0,1]$
$$
\| (t-A)^{-1} \|_{\L((X_{0},X_{1})_{\alpha,p},(X_{0},X_{1})_{\beta,p})}\leq C t^{\beta-1-\alpha} .
$$
\end{lem}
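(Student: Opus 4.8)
The plan is to reduce the bound, by two successive applications of the interpolation property of bounded operators, to a handful of mapping estimates for $(t-A)^{-1}$ between the endpoint spaces $X_0$ and $X_1=\dom(A)$. First I would record what sectoriality provides: for every $t\in(0,1]$ the resolvent $(t-A)^{-1}$ is a bounded operator on $X_0$ with $\|(t-A)^{-1}\|_{\L(X_0)}\le Kt^{-1}$, and from the algebraic identity $A(t-A)^{-1}=t(t-A)^{-1}-I$ one gets $\|A(t-A)^{-1}\|_{\L(X_0)}\le K+1$, uniformly in $t\in(0,1]$. Since the graph norm of $A$ is equivalent to $\|\cdot\|_{X_1}$ and $(t-A)^{-1}$ commutes with $A$ on $X_1$, these two facts yield the four corner estimates
$$
\|(t-A)^{-1}\|_{\L(X_0,X_0)},\quad \|(t-A)^{-1}\|_{\L(X_1,X_0)},\quad \|(t-A)^{-1}\|_{\L(X_0,X_1)},\quad \|(t-A)^{-1}\|_{\L(X_1,X_1)},
$$
each bounded by $Ct^{e}$, where $e$ is the value of the exponent $\beta-1-\alpha$ at the matching vertex $(\alpha,\beta)\in\{(0,0),(1,0),(0,1),(1,1)\}$ of the admissible region $\{0\le\alpha\le\beta\le1\}$.

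Next I would interpolate. Fixing the domain at $X_0$ and then at $X_1$, and interpolating the target between $X_0$ and $X_1$ with parameter $\beta$, the operator-norm interpolation inequality ($M_0^{1-\beta}M_1^{\beta}$) turns the corner estimates into bounds for $\|(t-A)^{-1}\|_{\L(X_0,(X_0,X_1)_{\beta,p})}$ and $\|(t-A)^{-1}\|_{\L(X_1,(X_0,X_1)_{\beta,p})}$; a final interpolation of the domain between $X_0$ and $X_1$ with parameter $\alpha$ produces the desired bound for $\|(t-A)^{-1}\|_{\L((X_0,X_1)_{\alpha,p},(X_0,X_1)_{\beta,p})}$. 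Because $\beta-1-\alpha$ is affine in $(\alpha,\beta)$, this bookkeeping reproduces exactly, or dominates (which is enough, as $t\le1$), the power $t^{\beta-1-\alpha}$. The endpoint cases $\alpha=1$ or $\beta=1$, which force $p=\infty$, are included once one replaces the standard interpolation functor by the direct $K$-functional description of $(X_0,X_1)_{1,\infty}$; alternatively the whole estimate can be carried out at the level of the $K$-functional, splitting a given $x\in(X_0,X_1)_{\alpha,p}$ at a scale tuned to both the $K$-functional variable and $t$, applying $(t-A)^{-1}$ to the two pieces with the $X_0\to X_0$ and $X_0\to X_1$ bounds, and optimizing.

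The step I expect to be the real work is the corner estimate $\|(t-A)^{-1}\|_{\L(X_0,X_1)}\le C$ with $C$ independent of $t\in(0,1]$, equivalently a $t$-uniform bound on $\|(t-A)^{-1}\|_{\L(X_0)}$ as $t\to0^+$. The bare sectorial estimate only gives $\|(t-A)^{-1}\|_{\L(X_0)}\le Kt^{-1}$, and interpolating that alone would merely reproduce the crude power $t^{-1}$ for all $\alpha,\beta$ instead of $t^{\beta-1-\alpha}$. Securing the sharp $X_0\to X_1$ bound is precisely where one must use that $t-A$ remains boundedly invertible down to $t=0$ — so that $(t-A)^{-1}$ has a limit in $\L(X_0,X_1)$ — i.e. an input genuinely beyond mere sectoriality (the $(1+|\lambda|)$-weighted resolvent estimate, equivalently invertibility of the underlying operator). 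Once that corner is in hand, the remaining interpolation is routine.
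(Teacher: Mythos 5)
Your proposal takes a genuinely different route from the paper. The paper represents $(t-A)^{-1}=\int_0^\infty e^{-ts}e^{sA}\,ds$ and inserts the semigroup smoothing estimate $\|e^{sA}\|_{\L((X_0,X_1)_{\alpha,p},(X_0,X_1)_{\beta,p})}\leq Cs^{\alpha-\beta}$ (cited from Lunardi, Props.~2.2.2 and 2.2.9), so the power $t^{\beta-1-\alpha}$ drops out of the gamma integral. You instead reduce everything to the endpoint estimates for $(t-A)^{-1}$ between $X_0$ and $X_1$ and interpolate twice; since the target exponent $\beta-1-\alpha$ is affine, bilinear interpolation of the corner exponents reproduces it (in fact the two-step argument with the true corner bound $\lesssim t^{-1}$ at $(1,0)$ returns the slightly stronger power $t^{\beta-1-\alpha\beta}\leq t^{\beta-1-\alpha}$ on $(0,1]$). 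This is more elementary, avoiding the semigroup characterization of the interpolation spaces, and it correctly handles the $p=\infty$ endpoints via the $K$-functional.

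You also put your finger precisely on the load-bearing step: the corner $\|(t-A)^{-1}\|_{\L(X_0,X_1)}\leq C$ uniformly in $t\in(0,1]$ cannot follow from $A\in\S(\vartheta)$ alone, since that class only gives $\|(t-A)^{-1}\|_{\L(X_0)}\lesssim t^{-1}$; one needs the $(1+|\lambda|)$-weighted estimate, i.e.\ invertibility ($A\in\mathcal{P}(\vartheta)$). Without it the interpolation returns $t^{-1}$ for every $(\alpha,\beta)$, which is strictly weaker than $t^{\beta-1-\alpha}$ whenever $\beta>\alpha$. It is worth noting that the paper's own argument has exactly the same hidden dependence: for a bounded but non-decaying analytic semigroup the Lunardi smoothing estimate $\|e^{sA}\|_{\L((X_0,X_1)_{\alpha,p},(X_0,X_1)_{\beta,p})}\lesssim s^{\alpha-\beta}$ holds only for $s\in(0,1]$, while for $s\geq 1$ the operator norm stays bounded away from zero, so the tail $\int_1^\infty e^{-ts}\,ds\sim t^{-1}$ again dominates. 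Thus the displayed estimate in the paper's proof tacitly requires exponential decay of $e^{sA}$, i.e.\ $\mathcal{P}(\vartheta)$ rather than $\S(\vartheta)$ — the very same extra ingredient you identify. Your version has the merit of making that requirement explicit instead of absorbing it into a citation.
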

\begin{proof}
It follows from \cite[Propositions~2.2.2 and 2.2.9]{Lunar95} that 	
\begin{equation}
\label{eq: intermediae space}
\|t^{\beta-\alpha}e^{tA}\|_{\L((X_{0},X_{1})_{\alpha,p},(X_{0},X_{1})_{\beta,p})} \leq C=C(p,\alpha,\beta).
\end{equation}
Since 
$$
(t+A)^{-1}=\int_0^\infty e^{-ts} e^{-sA}\, ds ,
$$
we can compute for all $u\in (X_{0},X_{1})_{\alpha,p}$
\begin{eqnarray*}
\| (t+A)^{-1} u\|_{(X_{0},X_{1})_{\beta,p}} & \leq& \int_0^\infty e^{-ts} \|e^{-sA} u \|_{(X_{0},X_{1})_{\beta,p}}\, ds \\
&\leq & \, C \|u \|_{(X_{0},X_{1})_{\alpha,p}} \int_0^\infty e^{-ts} s^{\alpha-\beta}\, ds \\
&\leq & \, C \|u \|_{(X_{0},X_{1})_{\alpha,p}} t^{\beta-1-\alpha} \int_0^\infty e^{-s} s^{\alpha-\beta}\, ds .
\end{eqnarray*}
\end{proof}

By Lemma~\ref{Appendix: intermediate-resolvent}, when $x\leq 1,$ we have an even better estimate
\begin{eqnarray*}
 \| (x+\sL^\eta_{s,\tau+1} )^{-1} F (x+ \sL^\eta_{s,\tau} )^{-1} u\|_{s,p} &\leq &\frac{M}{x} \|(x+\sL )^{-1} u\|_{s+2,p} \\
&\leq & \frac{M}{x^{1+\sigma- \varepsilon}} \|u\|_{s+2\sigma,p}.
\end{eqnarray*}
Therefore,
\begin{align}
\label{commutator est - grad}
\|[\nabla ,\sL_s^\sigma] u \|_{s,p}
= :\| \nabla (\sL^\eta_{s,\tau})^\sigma u - (\sL^\eta_{s-1,\tau+1})^\sigma (\nabla u)\|_{s,p} 
\leq M \|u\|_{s+ 2\sigma+2\varepsilon,p}.
\end{align}

\begin{theorem}\label{main theorem}
Assume that $w\in BC^r(\M)$ satisfies \eqref{w-BC}-\eqref{positive} and let 
\begin{center}
$f\in L_{q}(J, H^s_p(\M))$ for some $p,q\in(1,\infty)$ and $s\in [0,r)$. 
\end{center}
Then the solution to \eqref{MR-eq} satisfies
\begin{equation}
\label{eq: higher oder MR}
u\in H^1_q(J, H^s_p(\M, V)) \cap L_q(J, H^{s+2\sigma}_p(\M,V)), 
\end{equation}
i.e.
$$
w \sL^\sigma \in \mathcal{MR}_q(H^{s+2\sigma}_p(\M,V), H^s_p(\M, V)) .
$$
\end{theorem}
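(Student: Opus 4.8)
The plan is to reduce to the case of vanishing initial data and then to bootstrap in the regularity exponent $s$, the base of the bootstrap being Proposition~\ref{Prop: strong reg s<1} and its engine the commutator estimate \eqref{commutator est - grad}.

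\emph{Reduction and base case.} Since $-w\sL^\sigma$ generates an analytic semigroup on $H^s_p(\M,V)$ (for $s<1$ this is part of Proposition~\ref{Prop: strong reg s<1}; in general it follows a posteriori from the maximal regularity established below, the operators $\sL$, $\sL^\sigma$, $w\sL^\sigma$ being sectorial of some angle $>\pi/2$, cf.\ Theorem~\ref{Thm: sectoriality Lap} and Section~\ref{Section 4}), the assertion $w\sL^\sigma\in\mathcal{MR}_q(H^{s+2\sigma}_p(\M,V),H^s_p(\M,V))$ is, exactly as in Proposition~\ref{Prop: strong reg s<1}, equivalent to the unique solvability of \eqref{MR-eq} in $L_q(J,H^{s+2\sigma}_p(\M,V))\cap H^1_q(J,H^s_p(\M,V))$, the contribution of a general initial datum in $(H^s_p,H^{s+2\sigma}_p)_{1-1/q,q}$ being absorbed, in the usual way, by that semigroup. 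When $s\in[0,1)$ one has $s<\min\{r,1\}$; choosing $r'\in(s,\min\{r,1\}]$ and using the embedding $BC^r(\M)\hookrightarrow BC^{r'}(\M)$, Proposition~\ref{Prop: strong reg s<1} with $r'$ in place of $r$ gives the claim. For general $s$ I would argue by induction on $m\in\N$, the hypothesis $P(m)$ being: \emph{for every tensor bundle $V^\eta_\tau$ and every $s\in[0,r)$ with $s<m$, problem \eqref{MR-eq} has a unique solution in $L_q(J,H^{s+2\sigma}_p(\M,V^\eta_\tau))\cap H^1_q(J,H^s_p(\M,V^\eta_\tau))$}; $P(1)$ is the base case just treated.

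\emph{Inductive step.} Assume $P(m)$ and let $s\in[0,r)$ with $s<m+1$; we may assume $s\in[m,m+1)$, so that $1\le m\le s<r$ and in particular $r-1>0$. Fix $\varepsilon>0$ so small that $s-1+2\varepsilon<\min\{r,m\}$. Applying $P(m)$ at the regularity level $s-1+2\varepsilon$ to $f\in L_q(J,H^s_p)\hookrightarrow L_q(J,H^{s-1+2\varepsilon}_p)$ shows that the solution $u$ of \eqref{MR-eq} already satisfies $u\in L_q(J,H^{(s-1)+2\sigma+2\varepsilon}_p(\M,V))$. Next I would differentiate \eqref{MR-eq}: using $\nabla(w\sL^\sigma u)=(\nabla w)\otimes\sL^\sigma u+w\,\sL^\sigma(\nabla u)+w\,[\nabla,\sL^\sigma]u$, the field $v:=\nabla u$ (a section of $V^\eta_{\tau+1}$) solves
\begin{equation*}
\partial_t v+w\sL^\sigma v=\nabla f-(\nabla w)\otimes\sL^\sigma u-w\,[\nabla,\sL^\sigma]u=:\tilde f,\qquad v(0)=0 .
\end{equation*}
I claim $\tilde f\in L_q(J,H^{s-1}_p(\M,V^\eta_{\tau+1}))$: indeed $\nabla f\in L_q(J,H^{s-1}_p)$ since $f\in L_q(J,H^s_p)$; one has $(\nabla w)\otimes\sL^\sigma u\in L_q(J,H^{s-1}_p)$ because $\sL^\sigma u\in L_q(J,H^{s-1}_p)$ and $\nabla w\in BC^{r-1}(\M)\hookrightarrow BC^{s-1}(\M)$ acts by multiplication on $H^{s-1}_p$ by Proposition~\ref{embBCH}(ii) (here $r-1\ge s-1$ is used); and, crucially, $w\,[\nabla,\sL^\sigma]u\in L_q(J,H^{s-1}_p)$ by the commutator estimate \eqref{commutator est - grad} at level $s-1$ with parameter $\varepsilon$ — which is precisely why the preliminary upgrade to $u\in L_q(J,H^{(s-1)+2\sigma+2\varepsilon}_p)$ was needed — together with the fact that $w\in BC^r(\M)$ multiplies $H^{s-1}_p$. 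Now $P(m)$ at level $s-1$, applied to the $\tilde f$-equation on the bundle $V^\eta_{\tau+1}$, produces a solution in $L_q(J,H^{s-1+2\sigma}_p)\cap H^1_q(J,H^{s-1}_p)$ which, by uniqueness, coincides with $v=\nabla u$. Hence $\nabla u\in L_q(J,H^{s-1+2\sigma}_p(\M,V^\eta_{\tau+1}))$, and since also $u\in L_q(J,H^{s-1+2\sigma}_p(\M,V))$ we conclude $u\in L_q(J,H^{s+2\sigma}_p(\M,V))$; finally $\partial_t u=f-w\sL^\sigma u\in L_q(J,H^s_p(\M,V))$ because $\sL^\sigma u\in L_q(J,H^s_p)$ and $w\in BC^r(\M)\hookrightarrow BC^s(\M)$ multiplies $H^s_p$. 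Thus $u$ lies in the desired class, uniqueness being inherited from the weaker $P(m)$-class; since $V=V^\eta_\tau$ was arbitrary, $P(m+1)$ follows.

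\emph{Main obstacle.} The crux is the $2\varepsilon$ loss of derivatives in \eqref{commutator est - grad}: differentiating \eqref{MR-eq} directly from the regularity $u\in L_q(J,H^{s-1+2\sigma}_p)$ that $P(m)$ supplies at level $s-1$ would only yield $w[\nabla,\sL^\sigma]u\in L_q(J,H^{s-1-2\varepsilon}_p)$, and the induction would fail to close. The way around it is the preliminary step that buys $2\varepsilon$ extra derivatives for $u$ by invoking the inductive hypothesis at the slightly higher level $s-1+2\varepsilon$; this is legitimate precisely because the target exponent $s$ is assumed \emph{strictly} below $r$, which leaves the needed room. Two secondary points must be kept in mind: the inductive statement has to be uniform in the tensor bundle, since differentiation raises the covariant order and forces one to apply $P(m)$ on $V^\eta_{\tau+1}$; and the identification of $v=\nabla u$ with the solution furnished by $P(m)$ uses uniqueness for the linear problem on the (possibly negative-order) scale of spaces on which $\nabla u$ a priori lives, which is harmless because $\sL$, $\sL^\sigma$ and $w\sL^\sigma$ extend to the whole scale $\{H^t_p\}_{t\in\R}$ by duality with unchanged sectoriality and uniqueness. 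Alternatively, one may run the same differentiation argument on the resolvent equation $(\lambda+c+w\sL^\sigma)u=g$ to obtain $R$-sectoriality of $w\sL^\sigma+c$ on $H^s_p$ directly and then invoke Theorem~\ref{KaWeTh}.
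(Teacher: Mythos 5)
Your proposal is correct and takes essentially the same approach as the paper: differentiate \eqref{MR-eq} to get an equation for $v=\nabla u$ on the bundle $V^\eta_{\tau+1}$, use a preliminary application of the lower-order result at level $s-1+2\varepsilon$ to absorb the $2\varepsilon$ loss in the commutator estimate \eqref{commutator est - grad}, check that the new right-hand side lies in $L_q(J,H^{s-1}_p)$, then apply the lower-order result again to $v$. The paper writes out only the case $s\in[1,2)$ and says the rest ``follows by induction''; you have simply made that induction precise (in particular the need to carry the tensor index $\tau$ along), and you have noted, slightly more carefully than the paper, that identifying $\nabla u$ with the strong solution of the differentiated equation requires uniqueness on the weaker (possibly negative-order) scale where $\nabla u$ a priori lives.
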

\begin{proof}
When $s\in [0,1)$, the assertion is already proved. Consider the case $s\in [1,2)$ and $r>1$. Choose $\varepsilon>0$ so small that $s-1+2\varepsilon<1$.
Taking $\nabla $ on both sides of \eqref{MR-eq} yields
$$
\partial_t v + w (\sL^\eta_{s-1,\tau+1})^\sigma v = \nabla f - \nabla w \otimes(\sL^\eta_{s ,\tau })^\sigma u - w[\nabla, (\sL_s)^\sigma]u,
$$
where $v=\nabla u$. 
By Proposition~\ref{Prop: strong reg s<1}, we already know that
$$
u \in L_q(J, H^{s-1+2\sigma+2\varepsilon}_p(\M,V))\cap H^1_q(J,H^{s-1+2\varepsilon}_p(\M,V)).
$$
The standard pointwise multiplication theory, cf. \cite[Theorem~9.2]{Ama13}, implies
$$
\nabla w \otimes (\sL^\eta_{s ,\tau })^\sigma u \in L_q(J,H^{s-1}_p(\M,V^\eta_{\tau+1}));
$$
\eqref{commutator est - grad} gives 
$$
[\nabla, (\sL_s)^\sigma]u \in L_q(J,H^{s-1}_p(\M,V^\eta_{\tau+1})).
$$
Note that in the above step, we need $u \in L_q(J, H^{s-1+2\sigma+2\varepsilon}_p(\M,V))$ in view of \eqref{commutator est - grad}.
It follows from Proposition~\ref{Prop: strong reg s<1} that
$$
v \in L_q(J, H^{s-1+2\sigma}_p(\M,V^\eta_{\tau+1}))\cap H^1_q(J,H^{s-1}_p(\M,V^\eta_{\tau+1})).
$$
This proves \eqref{eq: higher oder MR} for $s\in [1,2)$. The general case follows by induction.
\end{proof}
 
\section{Applications}\label{Section 6}

In this section, we will apply Theorem~\ref{main theorem} and the following theorem by P. Cl\'ement and S. Li to study two quasilinear parabolic equations. 
\begin{theorem}[{\rm Cl\'ement and Li, \cite[Theorem 2.1]{CL}}]\label{ClementLi}
Suppose that $X_{1}\overset{d}{\hookrightarrow}X_{0}$ is a continuously and densely injected complex Banach couple.
Let $U$ be an open subset of $(X_0,X_1)_{1-\frac{1}{q},q}$, where $q\in(1,\infty)$. Consider the problem
\begin{equation}
\label{aqpp1}
\left\{\begin{aligned}
u'(t)+A(u(t))u(t)&= F(t, u(t )) , &&t\geq 0\\
u(0)&= u_{0}, &&
\end{aligned}\right. 
\end{equation}
where $u_{0}\in U$. 
Assume that:\\
{\em (H1)} $A \in C^{1-}(U,\mathcal{L}(X_{1},X_{0}))$.\\
{\em (H2)} $F \in C^{1-,1- }([0,T_0]\times U, X_{0})$ for some $T_0>0$.\\
{\em (H3)} $A(u_{0})\in \mathcal{MR}_q(X_1,X_0)$.\\
Then, there exists a $T\in (0,T_0]$ and a unique 
$$
u\in H_q^{1}((0,T),X_{0})\cap L_q((0,T),X_{1})
$$ 
solving \eqref{aqpp1}. 
\end{theorem}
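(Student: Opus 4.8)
The plan is to reformulate \eqref{aqpp1} as a fixed-point problem for the solution operator of the linearized (non-autonomous in a mild sense, but actually autonomous after freezing) equation, and to invoke the characterization of maximal $L_q$-regularity as an isomorphism property together with a contraction argument on a small time interval. First I would fix the operator $A_0 := A(u_0)$, which by hypothesis (H3) lies in $\mathcal{MR}_q(X_1,X_0)$; by the isomorphism formulation recalled after \eqref{inembmaxreg}, the map $(\partial_t + A_0,\gamma_0)$ is a bounded linear isomorphism from $\bE_1(J) = L_q(J,X_1)\cap H^1_q(J,X_0)$ onto $\bE_0(J)\times X_{1/q,q}$ for every bounded $J=(0,T)$, with inverse denoted $\cS_{A_0}$. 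Because the norm of this inverse on $\bE_1(J)$ can be controlled, and in fact the norm of the map $f\mapsto \cS_{A_0}(f,0)$ tends to zero (or at least stays bounded with an improving embedding constant) as $T\to 0^+$, one obtains the quantitative estimates needed below; this is the standard fact that the zero-trace solution operator has small norm on short intervals once one accounts for the trace space.

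Next I would set up the fixed-point map. For $v$ in a suitable closed ball $\Sigma_{T,\mu}$ of the affine subspace $\{u\in\bE_1((0,T)): u(0)=u_0\}$ around the constant path $t\mapsto u_0$, one has $v(t)\in U$ for all $t\in[0,T]$ (shrinking $T$ so that the embedding \eqref{inembmaxreg} keeps $v$ close to $u_0$ in $X_{1/q,q}$ uniformly), so the right-hand side $t\mapsto F(t,v(t)) + (A_0 - A(v(t)))v(t)$ is well defined and, by (H1), (H2) and the Lipschitz continuity there, lies in $\bE_0((0,T))$. Define $\Phi(v)$ to be the unique element of $\bE_1((0,T))$ with $\Phi(v)(0)=u_0$ and $\partial_t\Phi(v) + A_0\Phi(v) = F(\cdot,v) + (A_0 - A(v))v$. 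A solution of \eqref{aqpp1} on $(0,T)$ is exactly a fixed point of $\Phi$. To see $\Phi$ maps $\Sigma_{T,\mu}$ into itself and is a contraction, I would estimate $\|\Phi(v)-\Phi(\bar v)\|_{\bE_1}$ by the zero-trace solution operator applied to the difference of the right-hand sides, splitting that difference into an $F$-part, handled by the $C^{1-,1-}$-bound of (H2), and an $(A_0-A(\cdot))(\cdot)$-part, handled by writing $A_0 v - A(v)v - (A_0\bar v - A(\bar v)\bar v) = (A_0 - A(v))(v-\bar v) + (A(\bar v)-A(v))\bar v$ and using the $C^{1-}$-bound of (H1) together with $\|A_0 - A(v)\|_{\L(X_1,X_0)}$ being small on $\Sigma_{T,\mu}$ for small $\mu$. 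Combining, $\|\Phi(v)-\Phi(\bar v)\|_{\bE_1}\le (C\mu + \omega(T))\|v-\bar v\|_{\bE_1}$ with $\omega(T)\to 0$; choosing first $\mu$ then $T$ small makes the constant $<1$, and a similar computation against the fixed reference path gives the self-mapping property.

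The contraction mapping principle then yields a unique fixed point $u\in\bE_1((0,T))$, which is the asserted solution, and uniqueness in the class $H^1_q((0,T),X_0)\cap L_q((0,T),X_1)$ follows because any such solution automatically lies in $\Sigma_{T',\mu}$ for $T'$ small (again by \eqref{inembmaxreg}) and hence must coincide with the fixed point there, after which a connectedness/continuation argument on $[0,T]$ propagates uniqueness. The main obstacle I anticipate is the bookkeeping of the two smallness parameters: one must verify that the norm of the zero-trace maximal-regularity solution operator for $A_0$ on $(0,T)$ does not blow up as $T\to 0^+$ (it does not, precisely because the trace is zero, so one picks up a positive power of $T$ from the embedding into $C([0,T],X_{1/q,q})$), and that the nonlinear perturbation terms genuinely gain this smallness; once that is in place the argument is the classical quasilinear parabolic fixed-point scheme. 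I would also note that applying this with $X_0 = H^s_p(\M,V)$, $X_1 = H^{s+2\sigma}_p(\M,V)$, $A(u) = w(u)\sL^\sigma$, and hypothesis (H3) supplied by Theorem~\ref{main theorem}, gives the local well-posedness of \eqref{S1: Cauchy problem-quaslinear}.
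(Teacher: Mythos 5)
This theorem is not proved in the paper at all; it is cited as an external result of Cl\'ement and Li, \cite[Theorem~2.1]{CL}, and is used as a black box in Section~\ref{Section 6}. So there is no internal proof to compare against. Your sketch is, in broad outline, the standard and correct argument — indeed it is essentially how the original theorem is proved: freeze $A_0 := A(u_0)$, use (H3) to obtain the isomorphism $(\partial_t + A_0, \gamma_0)\in\Lis(\bE_1(J),\bE_0(J)\times X_{1/q,q})$, rewrite the problem as a fixed point for your map $\Phi$, and run a contraction on a small ball $\Sigma_{T,\mu}$ of the affine slice $\{v\in\bE_1((0,T)):v(0)=u_0\}$, with the perturbation split exactly as you write it into the $F$-piece (Lipschitz from (H2)) and the $(A_0-A(v))v$-piece (Lipschitz from (H1) plus smallness from $v$ staying uniformly near $u_0$ in $X_{1/q,q}$).

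One caveat on the mechanism of smallness. The norm of the zero-trace solution operator $f\mapsto \cS_{A_0}(f,0)$ from $\bE_0((0,T))$ to $\bE_1((0,T))$ is \emph{uniformly bounded} in $T$ but does \emph{not} tend to zero as $T\to 0^+$; likewise the embedding constant of $\{u\in\bE_1((0,T)):u(0)=0\}\hookrightarrow C([0,T],X_{1/q,q})$ is only uniform in $T\le T_0$, not decaying. The genuine sources of smallness are elsewhere: (i) $\sup_{t}\|A_0 - A(v(t))\|_{\L(X_1,X_0)}$ is small because $v$ stays in a small neighborhood of $u_0$ in the trace space once $\mu$ and $T$ are small; (ii) passing from a $C([0,T],\cdot)$-bound on the $F$-difference to an $\bE_0$-bound yields a factor $T^{1/q}$, which is where the $\omega(T)\to 0$ in your estimate actually comes from; and (iii) the reference path $u^*$ (say, the solution of $\partial_t u^* + A_0 u^*=F(0,u_0)$, $u^*(0)=u_0$) satisfies $\|u^*(t)-u_0\|_{X_{1/q,q}}\to 0$ as $T\to 0^+$ by continuity. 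Your closing paragraph gestures at both the correct and the inaccurate mechanisms, so the structure is sound, but I would avoid phrasing it as the solution operator itself becoming small. With that rephrasing your sketch is a faithful outline of the Cl\'ement--Li proof.
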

 
\subsection{Fractional porous medium equation}\label{Section 6.1}

As an application, we consider first the following fractional porous medium equation (FPME)
\begin{equation}\label{FPMEQ}
\left\{\begin{aligned}
\partial_t u +(-\Delta )^\sigma (|u|^{m-1}u )&=f &&\text{on}&&\M\times (0,\infty);\\
u(0)&=u_0 &&\text{on}&&\M,
\end{aligned}\right.
\end{equation}
where $(\M,g)$ is an $n$-dimensional closed manifold, $\sigma\in (0,1)$ and 
\begin{equation}\label{fregH}
f\in C([0,T_{0}], H^s_p(\M))
\end{equation}
for some $T_{0}>0$, $s\geq0$ and $p\in(1,\infty)$.
Further, $\Delta=-\nabla^* \circ \nabla$ is the Laplace-Beltrami operator, cf. \eqref{exp of L}.

\begin{theorem}[Smoothing for the FPME]\label{Thsmoothfpme}
Let $u_{0}\in B^{s+2\sigma-2\sigma/q}_{p,q}(\M)$ for some $q\in (1,\infty)$, where $p$ and $s$ are as in \eqref{fregH}. Assume that $2\sigma>2\sigma/q+n/p$ and $u_{0}>c$ on $\M$, for certain $c>0$. Then, there exists a $T\in (0,T_0]$ and a unique 
\begin{eqnarray}\label{ureg1st}
\lefteqn{u\in L_q((0,T), H^{s+2\sigma}_p(\M)) \cap H^1_q( (0,T), H^s_p(\M))}\\\label{ureg2nd}
&&\quad \quad \quad \quad \quad \hookrightarrow C([0,T], B^{s+2\sigma-2\sigma/q}_{p,q}(\M) )
\end{eqnarray}
solving \eqref{FPMEQ}. If, in addition,
$$
f\in \bigcap _{\nu>0}L_{q}( (0,T_{0}), H^{\nu}_{p}(\M))\cap C^{\nu}((0,T_{0}),H^{\nu}_{p}(\M)),
$$ 
then $u$ satisfies the regularity
\begin{eqnarray}\label{regFPME2}
u\in \bigcap _{\nu>0} C^{\nu}((0,T),H^{\nu}_{p}(\M)).
\end{eqnarray}
\end{theorem}
\begin{proof}

We consider first the problem 
\begin{equation}\label{WFPMEQ}
\left\{\begin{aligned}
\partial_t w +mw^{\frac{m-1}{m}}(-\Delta)^{\sigma}w&=mw^{\frac{m-1}{m}}f &&\text{on}&&\M\times (0,\infty);\\
w(0)&=u_{0}^{m} &&\text{on}&&\M.
\end{aligned}\right.
\end{equation}
Concerning \eqref{ureg1st}-\eqref{ureg2nd}, we will apply the theorem of P. Cl\'ement and S. Li, i.e. Theorem \ref{ClementLi}, to the above equation and then we will recover the required existence and regularity result for the original problem. Define the Banach couple $X_{0}=H^s_p(\M)$, $X_{1}=H^{s+2\sigma}_p(\M)$, the operator family $A(\cdot)=A_{s}(\cdot)=m(\cdot)^{\frac{m-1}{m}}(-\Delta_{s})^{\sigma}$, where $\Delta_{s}$ denotes the map $\Delta:H^{s+2}_p(\M)\rightarrow H^{s}_p(\M)$, and let the potential term $F(\cdot)=m(\cdot)^{\frac{m-1}{m}}f$.
Note that Proposition \ref{Prop: interpolation} implies
$$
B^{s+2\sigma-2\sigma/q}_{p,q}(\M) \doteq (X_0,X_1)_{1-1/q,q}.
$$ 
By Proposition \ref{embBCH}, we have
\begin{equation}\label{interemb}
u_{0}^{\alpha}\in B^{s+2\sigma-2\sigma/q}_{p,q}(\M) \hookrightarrow H^{\xi}_p(\M) \hookrightarrow BC^{r}(\M),
\end{equation}
for any $\alpha\in\mathbb{R}$ and $s+n/p<r+n/p<\xi<s+2\sigma-2\sigma/q$. By the relation 
$$
|v-mu_{0}^{m-1}|\leq \|v-mu_{0}^{m-1}\|_\infty \leq C_{1}\|v-mu_{0}^{m-1}\|_{B^{s+2\sigma-2\sigma/q}_{p,q} },
$$
valid for certain $C_{1}>0$, we choose an open ball $U$ in $B^{s+2\sigma-2\sigma/q}_{p,q}(\M) $ around $mu_{0}^{m-1}$ of sufficiently small radius, such that 
\begin{equation}\label{lowerbound}
\mathrm{Re}(v)\geq c/2 \quad \text{for each} \quad v\in U. 
\end{equation}
Let $\Gamma$ be a finite positively oriented simple path in $\{z\in\mathbb{C}\, |\, \mathrm{Re}(z)>0\}$ that surrounds $\{\mathrm{Ran}(v)\, |\, v\in U\}$. For each $v_{1},v_{2}\in U$ we have
\begin{equation}\label{ualpha}
v_{1}^{\alpha}-v_{2}^{\alpha}=\frac{1}{2\pi i}\int_{\Gamma}\Big(\frac{\lambda^{\alpha}}{\lambda-v_{1}}-\frac{\lambda^{\alpha}}{\lambda-v_{2}}\Big)d\lambda=\frac{v_{1}-v_{2}}{2\pi i}\int_{\Gamma}\frac{\lambda^{\alpha}}{(\lambda-v_{1})(\lambda-v_{2})}d\lambda.
\end{equation}
Hence, by Proposition \ref{embBCH}~(ii) 
\begin{eqnarray*}
\lefteqn{ \|A_{s}(v_{1})-A_{s}(v_{2})\|_{\mathcal{L}(H^{s+2\sigma}_p(\M),H_{p}^{s}(\M))}}\\
&=&\|(v_{1}^{\frac{m-1}{m}}-v_{2}^{\frac{m-1}{m}})(-\Delta_s )^{\sigma}\|_{\mathcal{L}(H^{s+2\sigma}_p(\M),H_{p}^{s}(\M))}\\
&\leq& C_{2} \|(v_{1}^{\frac{m-1}{m}}-v_{2}^{\frac{m-1}{m}})\cdot\|_{\mathcal{L}(H_{p}^{s}(\M))}\\
&\leq& C_{3} \|v_{1}^{\frac{m-1}{m}}-v_{2}^{\frac{m-1}{m}}\|_{H^{\xi}_p(\M)}\leq C_{4} \|v_{1}^{\frac{m-1}{m}}-v_{2}^{\frac{m-1}{m}}\|_{ B^{s+2\sigma-2\sigma/q}_{p,q} }
\end{eqnarray*}
for certain $C_{2}, C_{3}, C_{4}>0$, so that
\begin{equation}\label{aLipcont}
 \|A_{s}(v_{1})-A_{s}(v_{2})\|_{\mathcal{L}(H^{s+2\sigma}_p(\M),H^s_p(\M))}\leq C_{5}\|v_{1}-v_{2}\|_{ B^{s+2\sigma-2\sigma/q}_{p,q} }
\end{equation}
for some $C_{5}>0$ due to \eqref{interemb} and \eqref{ualpha}.

Furthermore, for each $t_{1},t_{2}\in [0,T_{0}]$, by Proposition~\ref{embBCH}~(ii), we have
\begin{eqnarray}\nonumber
\lefteqn{\|F(v_{1},t_{1})-F(v_{2},t_{2})\|_{s,p}}\\\nonumber
&=&m\|(v_{1}^{\frac{m-1}{m}}-v_{2}^{\frac{m-1}{m}})f(t_{1})+v_{2}^{\frac{m-1}{m}}(f(t_{1})-f(t_{2}))\|_{s,p}\\\nonumber
&\leq& C_{6}\Big(\|v_{1}^{\frac{m-1}{m}}-v_{2}^{\frac{m-1}{m}}\|_{H^{\xi}_p(\M)}\|f\|_{C([0,T_{0}],H^s_p(\M))}\\\nonumber
&&+\|v_{2}^{\frac{m-1}{m}}\|_{H^{\xi}_p(\M)}\|f(t_{1})-f(t_{2})\|_{s,p}\Big) \\\nonumber
&\leq& C_{7}\Big(\|v_{1}^{\frac{m-1}{m}}-v_{2}^{\frac{m-1}{m}}\|_{ B^{s+2\sigma-2\sigma/q}_{p,q}}\|f\|_{C([0,T_{0}],H^s_p(\M))}\\\nonumber
&&+\|v_{2}^{\frac{m-1}{m}}\|_{ B^{s+2\sigma-2\sigma/q}_{p,q}}\|f(t_{1})-f(t_{2})\|_{s,p}\Big) \\\label{Flipcont}
&\leq& C_{8}\Big(\|v_{1}-v_{2}\|_{B^{s+2\sigma-2\sigma/q}_{p,q}}+|t_{1}-t_{2}|\Big)
\end{eqnarray}
for some $C_{6},C_{7}, C_{8}>0$, where we have used \eqref{interemb} and \eqref{ualpha} once more.

Clearly, $A_{s}(u_{0}) $ has maximal $L_q$-regularity due to Theorem \ref{main theorem} and \eqref{interemb}. By Theorem \ref{ClementLi}, there exists a $T\in(0,T_{0}]$ and a unique 
\begin{equation}\label{wreg1}
 w\in H^1_q((0,T), H^s_p(\M)) \cap L_q((0,T), H^{s+2\sigma}_p(\M)) 
\end{equation}
solving \eqref{WFPMEQ}. In addition, due to \eqref{inembmaxreg}, we also have
\begin{equation}\label{wregA}
w\in C([0,T], B^{s+2\sigma-2\sigma/q}_{p,q}(\M)).
\end{equation}
Hence, by choosing $T>0$ small enough we can make $w(t)\in U$ for each $t\in[0,T)$. Then, due to Proposition~\ref{embBCH}~(iii) and \eqref{interemb}, for any $\alpha\in \mathbb{R}$
\begin{equation}\label{wreg2}
w^{\alpha}\in C([0,T], B^{s+2\sigma-2\sigma/q}_{p,q}(\M)).
\end{equation}
 
By the relation 
$$
\partial_{t}w^{\frac{1}{m}}=m^{-1}w^{\frac{1-m}{m}}\partial_{t}w,
$$
\eqref{wreg1}, \eqref{wreg2} and Proposition~\ref{embBCH} we deduce that $w^{1/m}\in H^1_q((0,T), H^s_p(\M))$. Furthermore, due to the Banach algebra property of $H^{s+2\sigma}_p(\M)$ and \eqref{wreg1}, we see that $w^{1/m}(t)\in H^{s+2\sigma}_p(\M)$ for almost all $t\in[0,T]$, so that the function $u=w^{1/m}$ satisfies $\partial_t u +(-\Delta )^\sigma (u^{m})=f $ for almost all $t\in[0,T]$. We estimate
\begin{eqnarray*}
\lefteqn{\|u(t)\|_{ s+2\sigma,p} \leq C_{9}(\|u(t)\|_{s,p}+\|(-\Delta )^\sigma u(t)\|_{s,p})}\\
&\leq& C_{9}(\|u(t)\|_{s,p}+\|\partial_{t}u(t)\|_{s,p}+\|f(t)\|_{s,p}),
\end{eqnarray*}
for certain $C_{9}>0$ and almost all $t\in[0,T]$. By integrating the above inequality over $t\in[0,T]$, we obtain \eqref{ureg1st}. Then, \eqref{ureg2nd} follows by \eqref{inembmaxreg}.

Concerning \eqref{regFPME2}, we will apply the smoothing result \cite[Theorem 3.1]{RoSch18} to \eqref{WFPMEQ} and then we will recover again the required regularity for $u$. Hence, we examine the conditions (i), (ii) and (iii) of \cite[Theorem 3.1]{RoSch18}. We choose the Banach scales 
$$
Y_{0}^{j}=H^{s+jb}_{p}(\M), \quad Y_{1}^{j}=H^{s+2\sigma+jb}_{p}(\M), \quad j\in\Nz,
$$
where $b\in(0,2\sigma-\frac{2\sigma}{q}-\frac{n}{p})$ is fixed. Moreover, choose $A(\cdot)$, $F$ as before and let 
$Z=\{v\in U\, |\, \mathrm{Im}(v)=0\}$.

Condition (i). By the previous step, we have the existence of $w$ as in \eqref{wreg1} satisfying $w(t)\in Z$ for all $t\in[0,T]$; here we have taken the complex conjugate to \eqref{WFPMEQ} and then used the above uniqueness result, i.e. we have obtained in addition that $\mathrm{Im}(w(t))=0$, $t\in[0,T]$. By this observation, \eqref{interemb}, \eqref{lowerbound}, \eqref{wregA} and Theorem \ref{main theorem}, we also have that, for each $t\in[0,T]$, the operator $A(w(t))\in \mathcal{MR}_{q}(Y_{1}^{0},Y_{0}^{0})$. Finally, due to \eqref{aLipcont} and \eqref{wregA}, we deduce that $A(w(\cdot))\in C([0,T],\mathcal{L}(Y_{1}^{0},Y_{0}^{0}))$.

Condition (ii). Let $h\in Z\cap (Y_{0}^{j},Y_{1}^{j})_{1-\frac{1}{q},q}$, $j\in\mathbb{N}$. By Proposition \ref{embBCH}~(i) and \eqref{interemb}, we have
$$
h\in B^{s+jb+2\sigma-\frac{2\sigma}{q}}_{p,q}(\M) \hookrightarrow H^{\xi_{j}}_p(\M) \hookrightarrow BC^{r_{j}}(\M),
$$
where $s+(j+1)b + n/p<r_{j} + n/p<\xi_{j}<s+jb+2\sigma-\frac{2\sigma}{q}$. Thus, due to \eqref{lowerbound} and Theorem \ref{main theorem} we obtain that $A(h)\in \mathcal{MR}_q(Y_{1}^{j+1}, Y_{0}^{j+1})$. Now let $\eta \in C([0,T],Z\cap (Y_{0}^{j},Y_{1}^{j})_{1-\frac{1}{q},q})$. Similarly to \eqref{aLipcont}, by \eqref{lowerbound} and Proposition~\ref{embBCH}, we get 
\begin{eqnarray*}
 &&\hspace{-60pt}\|A_{s+(j+1)b}(\eta(t_{1}))-A_{s+(j+1)b}(\eta(t_{2}))\|_{\mathcal{L}(H^{s+(j+1)b+2\sigma}_p(\M),H^{s+(j+1)b}_p(\M))}\\
 &\leq& C_{10} \|(\eta^{\frac{m-1}{m}}(t_{1})-\eta^{\frac{m-1}{m}}(t_{2}))\cdot\|_{\mathcal{L}(H^{s+(j+1)b}_p(\M))}\\
 &\leq& C_{11} \|\eta^{\frac{m-1}{m}}(t_{1})-\eta^{\frac{m-1}{m}}(t_{2})\|_{H^{\xi_{j}}_p(\M)}\\
 &\leq& C_{12} \|\eta^{\frac{m-1}{m}}(t_{1})-\eta^{\frac{m-1}{m}}(t_{2})\|_{ B^{s+jb+2\sigma-\frac{2\sigma}{q}}_{p,q}}\\
 & \leq& C_{13}\|\eta(t_{1})-\eta(t_{2})\|_{B^{s+jb+2\sigma-\frac{2\sigma}{q}}_{p,q}}
 \end{eqnarray*}
for some $C_{10},C_{11},C_{12}, C_{13}>0$, where $t_{1},t_{2}\in [0,T]$. This implies that 
$$
A_{s+(j+1)b}(\eta(\cdot))\in C([0,T],\mathcal{L}(Y_{1}^{j+1},Y_{0}^{j+1})).
$$

Condition (iii). Similarly to \eqref{Flipcont} we have
\begin{eqnarray*}
\|F(\eta(\cdot),\cdot)\|_{ s+(j+1)b,p}\leq C_{14}\|\eta^{\frac{m-1}{m}}(\cdot)\|_{B^{s+jb+2\sigma-\frac{2\sigma}{q}}_{p,q}}\|f\|_{ s+(j+1)b,p}
\end{eqnarray*}
for certain $C_{14}>0$. By Proposition~\ref{embBCH} (iii), the set $\eta^{\frac{m-1}{m}}(t)$, $t\in[0,T]$, is bounded in $B^{s+jb+2\sigma-\frac{2\sigma}{q}}_{p,q}(\M)$, so that $F(\eta(\cdot),\cdot)\in L_{q}((0,T),Y_{0}^{j+1})$. 

We conclude that for each $\delta\in(0,T)$
$$
 w\in \bigcap _{\nu>0} L_q((\delta,T), H^{\nu+2\sigma}_p(\M))\cap H^1_q((\delta,T), H^\nu_p(\M)). 
$$
By the same argument as before, we can pass the above regularity to $u$ so that by \cite[(I.2.5.2)]{Ama95}, Proposition \ref{Prop: interpolation} and \eqref{inembmaxreg} we also have
\begin{equation}\label{usmooth1}
 u\in \bigcap _{\nu>0} H^1_q((\delta,T), H^\nu_p(\M))\cap C([\delta,T),H^\nu_p(\M)). 
\end{equation}
Thus, by Proposition~\ref{embBCH}
\begin{equation}\label{usmooth2}
 u^{m-1}\partial_{t}u\in \bigcap _{\nu>0} L_q((\delta,T), H^\nu_p(\M)), 
\end{equation} 
so that, by differentiating \eqref{FPMEQ} over time we find that $u\in H^2_q((\delta,T), H^\nu_p(\M))$ for all $\nu>0$. Then \eqref{regFPME2} follows by iteration. 
\end{proof}
 
Higher regularity for solutions of \eqref{FPMEQ} in $\mathbb{R}^{n}$ was recently proved in \cite{VPGR} by differentiating in time or by applying $(-\Delta)^{\sigma}$ to the equation and then using successively a H\"older continuity result, see \cite[Section 6]{VPGR}. In the proof of Theorem \ref{Thsmoothfpme} above, we follow a totally different method, based on abstract maximal regularity theory, which is of particular interest by itself. 
 
\subsection{Nonlocal Kirchhoff diffusion problem}\label{Section 6.3}

The following parabolic Kirchhoff type problem
\begin{equation}
\label{Kirchhoff eq}
\left\{\begin{aligned}
\partial_t u(t) -M(\| \nabla u\|_2^2)\Delta u(t) &=0, &&t\geq 0,\\
u(0)&=u_0, &&
\end{aligned}\right. 
\end{equation}
has been studied by many authors; see \cite{Gobbino} and the references therein.
Here 
\begin{equation}
\label{APS: M of Kir}
M\in C^{1-}([0,\infty) , [0,\infty)).
\end{equation}

We will consider a tensor-valued non-local version of \eqref{Kirchhoff eq}, namely,
\begin{equation}
\label{Kirchhoff eq - nonlocal}
\left\{\begin{aligned}
\partial_t u(t) + M(\| (-\Delta)^{\sigma/2} u\|_2^2)(-\Delta)^\sigma u(t) &=F(u), &&t\geq 0,\\
u(0)&=u_0 . &&
\end{aligned}\right. 
\end{equation}
This equation has been explored in \cite{Gobbino}. 
Here $(\M,g)$ is either an $n$-dimensio\-nal closed manifold or $\R^n$.
Let $X_1^s=H^{s+2\sigma}_p(\M,V)$ and $X_0^s=H^{s}_p(\M,V)$. 
Put 
$$
U_p^s=\{ v\in (X_0^s,X_1^s)_{1-1/p,p} \,:\, M(\| (-\Delta)^{\sigma/2} v\|_2^2)\neq 0\}, \quad s+\sigma>2\sigma/p,
$$
and assume that 
\begin{equation}
\label{APS: F of Kir}
F\in C^{1-}(U^s_p , X_0^s).
\end{equation}
Similar problems have been investigated in \cite{Pucci17, Xiang18} for the scalar case with $F$ independent of $u$ and $F(u)=|u|^{r-2}u$, where $1<r<\infty$. 
The theorem below generalizes the previous results \cite{Pucci17, Xiang18} on nonlocal Kirchhoff equations to the tensor-valued case and extends the admissible class of nonlinearities. 
In particular, our result applies to systems of nonlocal Kirchhoff equations.

By Theorem~\ref{main theorem}, Theorem~\ref{ClementLi} and \cite[Theorem~3.1]{RoSch18}, we immediately have the following result.
\begin{theorem}
Assume that \eqref{APS: M of Kir} and \eqref{APS: F of Kir} are satisfied. For any $p\in (2,\infty)$ and
$u_0\in U^0_p$, the equation \eqref{Kirchhoff eq - nonlocal} has a unique solution 
$$
u\in L_p((0,T), X_1^0) \cap H^1_p((0,T),X_0^0)
$$
such that $M(\|(-\Delta)^{\sigma/2} u(t)\|_2^2)\neq 0$ for all $t\in (0,T)$. Moreover, 
$$
u\in L_p((\varepsilon,T), X_1^s) \cap H^1_p((\varepsilon,T),X_0^s) 
$$
for all $s\geq 0$ and $\varepsilon\in (0,T)$.
\end{theorem}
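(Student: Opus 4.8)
The plan is to recast \eqref{Kirchhoff eq - nonlocal} as an abstract quasilinear Cauchy problem of the form \eqref{aqpp1} and apply Theorem~\ref{ClementLi} for local well-posedness, and then to bootstrap to the higher order and $t>0$ regularity via the parameter dependent smoothing result \cite[Theorem~3.1]{RoSch18}, following the scheme already used above for the fractional porous medium equation. Concretely, I would take $X_0=X_0^0=L_p(\M,V)$ and $X_1=X_1^0=H^{2\sigma}_p(\M,V)$, so that by Proposition~\ref{Prop: interpolation} $(X_0,X_1)_{1-1/p,p}\doteq B^{2\sigma-2\sigma/p}_{p,p}(\M,V)$; set $A(v)=M(\|(-\Delta)^{\sigma/2}v\|_2^2)\,(-\Delta)^\sigma$ with $\dom(A(v))=X_1$ for $v\in U_p^0$, where $(-\Delta)^\sigma=\sL^\sigma$ with $\a\equiv1$; keep $F$ as in \eqref{APS: F of Kir} with $s=0$; and apply Theorem~\ref{ClementLi} with time exponent $q=p$.

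The crucial point to settle first is that the nonlocal coefficient is well defined and locally Lipschitz on the trace space. Since $(-\Delta)^{\sigma/2}$ maps $H^{\sigma+t}_p(\M,V)$ boundedly into $H^{t}_p(\M,V)$ for $t\geq0$ by Section~\ref{Section 4}, and since $2\sigma-2\sigma/p>\sigma$ precisely because $p>2$, real interpolation (Proposition~\ref{Prop: interpolation}) shows that $(-\Delta)^{\sigma/2}$ carries $B^{2\sigma-2\sigma/p}_{p,p}(\M,V)$ into $B^{\sigma-2\sigma/p}_{p,p}(\M,V)\hookrightarrow L_p(\M,V)\hookrightarrow L_2(\M,V)$, the last embedding being valid since $\M$ has finite volume for a closed manifold (for $\M=\R^n$ the analogous structure of the problem plays this role). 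Hence $v\mapsto\|(-\Delta)^{\sigma/2}v\|_2^2$ is a bounded quadratic form, in particular locally Lipschitz, on $(X_0,X_1)_{1-1/p,p}$; composing with the locally Lipschitz $M$ from \eqref{APS: M of Kir} and using $(-\Delta)^\sigma\in\L(X_1,X_0)$ yields (H1), i.e. $A\in C^{1-}(U_p^0,\L(X_1,X_0))$. Hypothesis (H2) is immediate from \eqref{APS: F of Kir}, as $F$ is time independent. For (H3) I would note that $u_0\in U_p^0$ together with $M\geq0$ forces $c_M:=M(\|(-\Delta)^{\sigma/2}u_0\|_2^2)$ to be a strictly positive constant, so that $A(u_0)=c_M(-\Delta)^\sigma=w\,\sL^\sigma$ with $w\equiv c_M\in BC^\infty(\M)$ trivially satisfying \eqref{w-BC}--\eqref{positive} (with $w_\infty=c_M$ when $\M=\R^n$); then Theorem~\ref{main theorem} with $s=0$ gives $A(u_0)\in\mathcal{MR}_p(X_1,X_0)$. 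Theorem~\ref{ClementLi} now yields $T>0$ and a unique $u\in H^1_p((0,T),X_0)\cap L_p((0,T),X_1)$ solving \eqref{Kirchhoff eq - nonlocal}; by \eqref{inembmaxreg} $u\in C([0,T],(X_0,X_1)_{1-1/p,p})$, and since $U_p^0$ is open with $u(0)=u_0\in U_p^0$, after shrinking $T$ one has $u(t)\in U_p^0$, i.e. $M(\|(-\Delta)^{\sigma/2}u(t)\|_2^2)\neq0$, for all $t\in[0,T)$, which is the first assertion.

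For the smoothing statement I would invoke \cite[Theorem~3.1]{RoSch18} with the Banach scales $Y_0^j=H^{jb}_p(\M,V)$, $Y_1^j=H^{2\sigma+jb}_p(\M,V)$, $j\in\Nz$, for a suitably small fixed $b>0$, keeping $A(\cdot)$ and $F$ as above and taking the admissible set to be the real trace spaces on which $M(\|(-\Delta)^{\sigma/2}\cdot\|_2^2)\neq0$, and then verify conditions (i)--(iii) of that theorem exactly as in the FPME proof: continuity and strict positivity of the scalar, spatially constant coefficient $t\mapsto M(\|(-\Delta)^{\sigma/2}u(t)\|_2^2)$ --- constants belonging to $BC^\infty(\M)$ and hence imposing no restriction relative to the scales --- combined with Theorem~\ref{main theorem} give (i) and the maximal regularity part of (ii); the Lipschitz estimate of the previous step, carried over to the higher scales via Proposition~\ref{embBCH}, gives the continuity part of (ii); and \eqref{APS: F of Kir} on the higher scales, again combined with Proposition~\ref{embBCH}, gives (iii). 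This produces $u\in\bigcap_{\nu>0}\bigl(L_p((\varepsilon,T),H^{\nu+2\sigma}_p(\M,V))\cap H^1_p((\varepsilon,T),H^\nu_p(\M,V))\bigr)$ for every $\varepsilon\in(0,T)$, which contains the asserted regularity on each scale $X_1^s,X_0^s$.

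The step I expect to be the main obstacle --- and essentially the only place where $p>2$ is genuinely needed --- is the control of $(-\Delta)^{\sigma/2}$ on the trace space: one must know that it maps $(X_0^0,X_1^0)_{1-1/p,p}$ continuously into $L_2(\M,V)$, so that the $u$-dependent, spatially nonlocal coefficient $M(\|(-\Delta)^{\sigma/2}u(t)\|_2^2)$ is meaningful and locally Lipschitz in $u(t)$; on $\R^n$ a little extra care is also needed here because $L_p\hookrightarrow L_2$ fails. Once this is settled the family $A(\cdot)$ is merely a $C^{1-}$ family of positive scalar multiples of the fixed operator $(-\Delta)^\sigma$, which is maximally regular in every $H^s_p(\M,V)$ by Theorem~\ref{main theorem}, so that the remainder is a bookkeeping application of Theorems~\ref{main theorem}, \ref{ClementLi} and \cite[Theorem~3.1]{RoSch18}, in agreement with the paper's comment that the result follows immediately from these three.
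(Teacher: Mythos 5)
Your proposal is correct and follows exactly the route the paper indicates---the paper's proof is a one-line citation of Theorem~\ref{main theorem}, Theorem~\ref{ClementLi}, and \cite[Theorem~3.1]{RoSch18}, and your write-up supplies precisely the verification of hypotheses (H1)--(H3) and conditions (i)--(iii) that this citation implicitly requires, with the key observation that the frozen coefficient $A(u_0)=M(\|(-\Delta)^{\sigma/2}u_0\|_2^2)(-\Delta)^\sigma$ is a positive constant multiple of $(-\Delta)^\sigma$. The caveat you flag about the $L_p\hookrightarrow L_2$ step on $\R^n$ is a genuine subtlety that the paper does not address either, so it does not indicate a gap relative to the paper's own argument.
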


\end{document}